\numberwithin{equation}{section}
\newtheorem{theorem}{Theorem}[section]
\newtheorem{proposition}[theorem]{Proposition}
\newtheorem{lemma}[theorem]{Lemma}
\theoremstyle{definition}
\newtheorem{definition}[theorem]{Definition}
\newtheorem{notation}[theorem]{Notation}
\newtheorem{remark}[theorem]{Remark}
\newtheorem{conjecture}[theorem]{Conjecture}
\newcommand{\supp}{\operatorname{supp}}
\newcommand{\vol}{\operatorname{Vol}}
\newcommand{\prob}{\mathcal{P}}
\newcommand{\Dc}{\mathcal{D}}
\newcommand{\Gc}{\mathcal{G}}
\newcommand{\Tc}{\mathcal{T}}
\newcommand{\Sc}{\mathcal{S}}
\newcommand{\gfr}{\mathfrak{g}}
\newcommand{\nod}{\mathcal{N}}
\newcommand {\E} {\mathbb{E}}
\newcommand {\M} {\mathcal{M}}
\newcommand {\R} {\mathbb{R}}
\newcommand {\Pc} {\mathcal{P}}
\newcommand {\Ac} {\mathcal{A}}
\newcommand {\Zc} {\mathcal{Z}}
\begin{document}


\title[Volume distribution of nodal domains]{Volume distribution of nodal domains of random band-limited functions}

\author{Dmitry Beliaev and Igor Wigman}

\begin{abstract}
We study the volume distribution of nodal domains of random band-limited functions on generic manifolds,
and find that in the high energy limit a typical instance obeys a deterministic universal law,
independent of the manifold. Some of the basic qualitative properties of this law,
such as its support, monotonicity and continuity of the cumulative probability function, are established.
\end{abstract}

\maketitle

\tableofcontents

\section{Introduction}

A conjecture of M. Berry motivates our interest in the structure of nodal domains of random plane waves
and closely related spherical harmonics.
It turns out that a lot of techniques could be extended to more general ensembles of random functions that
we will discuss later on. In this paper we discuss the distribution of nodal domains' areas and length of their boundaries;
we are following the footsteps of Nazarov and Sodin ~\cite{NaSo09,NaSo15}, who developed some novel techniques to study the total
number of nodal domains of smooth fields, and Sarnak-Wigman ~\cite{SW} who extended their tools to study finer questions
of counting nodal domains of a given topological type and their mutual positions (``nestings").

Let $(\M,g)$ be a compact smooth Riemannian $n$-manifold.
For a smooth function $f:\M\rightarrow\R$ we define $\Omega(f)$ to be the collection of all nodal domains of $f$,
and for $t>0$ denote by $\nod(f;t)$
the number of nodal domains $\omega\in\Omega(f)$ of volume
$$
\vol(\omega)<t,
$$
where $\vol=\vol_n$ is the $n$-dimensional volume on $\M$.
We also define
\begin{equation*}
\nod(f) =  \nod(f;\infty) =
|\Omega(f)|
\end{equation*}
to be the total number of nodal domains of $f$.
In this paper we will investigate the behaviour of $N(f;t)$ for several classes of random functions $f$.

Before introducing the most general result we would like to discuss one particular case
(whose scaling limit is Berry's random monochromatic waves) which
is easy to explain and is representative in the proof of the general result.
It is well-known that the space of spherical harmonics of degree $l$ is of dimension $2l+1$; let $\{\phi_i\}_{i=0\ldots 2l+1}$
be its arbitrary $L^2$-orthonormal basis. Define a random Gaussian spherical harmonic
\begin{equation}
\label{eq:fl harm def}
f_l=\sqrt{\frac{1}{2l+1}}\sum\limits_{i=1}^{2l+1} c_i \phi_i,
\end{equation}
where $c_i$ are i.i.d.
standard Gaussian variables.
The normalizing constant is chosen so that $$\E[|f(z)|^2]=1$$ for every $z\in\Sc^{2}$.


For the total number of nodal domains of $f_{l}$
Nazarov and Sodin ~\cite{NaSo09,NaSo15} proved that there exists a constant $c_{0}=c(2,1)>0$ (``Universal Nazarov-Sodin
Constant") so that
\begin{equation}
\label{eq:NS result spher harm}
\E\left[\left| \frac{\nod(f_{l})}{l^{2}} -  4\pi c_{0} \right| \right]\rightarrow 0,
\end{equation}
i.e. that $\frac{\nod(f_{l})}{l^{2}}$ converges to the constant
$4\pi\cdot c_{0}>0$ {\em in mean}, where $4\pi=\vol_{2}(\Sc^{2})$ is the surface area of the unit $2$-sphere.
Our first principal result refines this.

\begin{theorem}
\label{thm:limit exist spher harm}

Let $f_l$ be the random spherical harmonic of degree $l$. Then the following holds:

\begin{enumerate}
\item
There exists a monotone non-decreasing function
$$
\Psi=\Psi_{2,1}:(0,\infty)\rightarrow\R_{+ },
$$
continuous outside a countable set of jumps $\Tc_{0}=\Tc_{0;2;1} = \{t_{k}\}_{k=1}^{\infty}$,
so that for all $t\in \R\setminus\Tc_{0}$ we have
\begin{equation*}
\E\left[\left|  \frac{\nod(f_{l};t/l^{2})}{\nod(f_{l})} - \Psi(t) \right|\right] \rightarrow 0.
\end{equation*}

(Notation $\Psi_{2,1}$ and $\Tc_{0;2;1}$ will be clear from the formulation of Theorem \ref{thm:limit exist band lim}.)

\item

Let
$$
t_0=\pi j_{0,1}^2=18.168\ldots
$$
where $j_{0,1}\approx 2.4048$ is the first zero of the Bessel function $J_0$.
Then the function $\Psi$ defined above vanishes on $[0,t_{0})$, and is strictly increasing for $t>t_{0}$.

\end{enumerate}

\end{theorem}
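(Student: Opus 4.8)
The plan is to follow the Nazarov--Sodin strategy: introduce $\Psi$ as an intrinsic object attached to Berry's random monochromatic wave and then transfer the statement to $f_l$ by an integral-geometric sandwich. Let $F$ be the stationary isotropic Gaussian field on $\R^2$ with covariance $\E[F(x)F(y)]=J_0(|x-y|)$; it is the scaling limit of $f_l$ near any point, it is ergodic with respect to translations (its spectral measure, the uniform measure on the unit circle, is non-atomic), and almost surely $\Delta F + F = 0$. For $t>0$ and $R>0$ let $\nu(F;t,R)$ be the number of nodal domains of $F$ contained in the ball $B(R)$ of area less than $t$. The functional $\omega\mapsto\mathbf{1}[\area(\omega)<t]$, summed over nodal domains, is semi-local: it is translation invariant, for domains inside a ball it depends only on the restriction of $F$ to that ball, and the domains meeting $\partial B(R)$ number $O(R)$. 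Hence the Nazarov--Sodin machinery yields a constant $c(2,1;t)\ge 0$ with $\nu(F;t,R)/R^2\to c(2,1;t)$ almost surely and in mean; one then sets $\Psi_{2,1}(t):=c(2,1;t)/c(2,1)$, where $c(2,1)=c_0$ is the Nazarov--Sodin constant of the total count. Monotonicity of $\Psi$ in $t$ is immediate from monotonicity of $\nu(F;t,R)$, and a monotone function has only jump discontinuities, of which there are at most countably many; we define $\Tc_{0;2;1}$ to be that jump set, so continuity outside it is automatic.

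For the convergence statement I would cover $\Sc^{2}$ by $\asymp (l/r)^2$ geodesic caps of radius $r/l$, with $r$ a large parameter. On each cap $f_l(\exp_x(\cdot/l))$ is, for $l$ large, well approximated in distribution, in $C^1$, by $F$ on $B(r)$, so each nodal domain of $f_l$ contained in a cap is counted with area $\approx l^{-2}$ times the area of the corresponding nodal domain of $F$; the nodal domains meeting cap boundaries number $O((l/r)^2\cdot r)=O(l^2/r)$, negligible relative to the main term $\asymp l^2$ once $r\to\infty$. Letting $l\to\infty$ and then $r\to\infty$ gives $l^{-2}\nod(f_l;t/l^2)\to 4\pi\,c(2,1;t)$ in mean at every continuity point $t$ of $c(2,1;\cdot)$; dividing by $l^{-2}\nod(f_l)\to 4\pi\,c(2,1)>0$ (the Nazarov--Sodin theorem \eqref{eq:NS result spher harm}) yields the claim for $t\notin\Tc_{0;2;1}$, the boundedness of the ratio in $[0,1]$ upgrading convergence in probability to convergence in mean.

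Part (2), the vanishing on $[0,t_0)$, is a Faber--Krahn argument applied to $F$. If $\omega$ is a nodal domain of $F$ then $F|_\omega$ solves $\Delta u + u = 0$ with zero Dirichlet data and does not change sign on $\omega$, hence is the ground state of the Dirichlet Laplacian on $\omega$ and $\lambda_1(\omega)=1$. By the Faber--Krahn inequality in the plane, $1=\lambda_1(\omega)\ge\lambda_1\big(\text{disk of area }\area(\omega)\big)=\pi j_{0,1}^2/\area(\omega)$, so $\area(\omega)\ge \pi j_{0,1}^2 = t_0$. Therefore $c(2,1;t)=0$ for $t\le t_0$ and $\Psi$ vanishes on $[0,t_0)$; equivalently, applying the asymptotic Faber--Krahn inequality for small geodesic balls directly on $\Sc^{2}$, $\nod(f_l;t/l^2)=0$ for all large $l$ when $t<t_0$. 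Since equality in Faber--Krahn forces $\omega$ to be a round disk, a probability-zero event for $F$, there is no atom at $t_0$, so $t_0\notin\Tc_{0;2;1}$.

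The remaining, and hardest, point is strict monotonicity of $\Psi$ on $(t_0,\infty)$: I must show that for every $t_0<a<b$ a positive proportion of the nodal domains of $F$ have area in $(a,b)$. It suffices to exhibit, for such $a,b$, a solution $u$ of $\Delta u + u = 0$ on a large ball $B(R)$ possessing a bounded nodal domain of area in $(a,b)$ that is structurally stable there (a Morse function with no critical points on that nodal line): by full support of the Gaussian measure of $F$ in $C^{\infty}_{\mathrm{loc}}$ — which holds because the spectral measure is the whole unit circle, so finite combinations of plane waves are dense — the event that $F$ is $C^1$-close to $u$ on $B(R)$ has positive probability, hence positive density by ergodicity. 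One would construct such $u$ by deforming $u_0(x)=J_0(|x|)$, whose innermost nodal domain is the disk of area exactly $t_0$. The difficulty is that the first variation of the enclosed area under any Helmholtz perturbation vanishes — the mean of a solution of $\Delta w+w=0$ over the circle $\{|x|=j_{0,1}\}$ is proportional to $J_0(j_{0,1})=0$, reflecting that $t_0$ is the minimum — so one must either push the expansion to second order to produce areas in $(t_0,t_0+\delta)$, or run a connectedness argument along the (linear, hence connected) space of Helmholtz solutions on $B(R)$, tracking that bounded nodal domains vary continuously except at split/merge events, which preserve total area and, by the Faber--Krahn bound, involve only domains of area $\ge 2t_0$, to conclude that the set of achievable nodal-domain areas is all of $(t_0,\infty)$. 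Carrying out this construction with the required stability is where the real work lies; the sandwich of Part (1) and the Faber--Krahn bound are essentially routine within the Nazarov--Sodin framework.
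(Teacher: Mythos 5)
Your outline of Part (1) follows the same route as the paper: introduce the random plane wave $F$ as the scaling limit, use ergodicity and the integral-geometric sandwich to define $\Psi_{2,1}(t)=c(2,1;t)/c(2,1)$, invoke the Kac-Rice bound to control boundary-crossing domains, and transfer to $\Sc^2$ by covering with small caps and controlling the error terms. Your Faber-Krahn argument for the vanishing of $\Psi$ on $[0,t_0)$ is also exactly the paper's (the ground-state property of $F$ on a nodal domain, combined with Faber-Krahn, forces $\vol(\omega)\ge t_0$). Your reduction of strict monotonicity to ``exhibit a structurally stable Helmholtz solution $u$ on $B(R)$ with a bounded nodal domain of area in $(a,b)$, then use support of the Gaussian measure among Helmholtz solutions and translation invariance'' is the correct scheme, and your observation that the first variation of the enclosed area under a Helmholtz perturbation of $J_0(|\cdot|)$ vanishes (because the mean of any Helmholtz solution over the circle $|x|=j_{0,1}$ is $J_0(j_{0,1})w(0)=0$) is a genuinely nice point that explains why a naive perturbation of the extremal disk is delicate.

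However, you then write that carrying out the construction ``is where the real work lies'' and only gesture at two possible strategies (second-order expansion, or a connectedness/split-merge tracking argument), neither of which you carry out and both of which would require substantial further justification -- in particular, the claim that merge/split events ``preserve total area'' is not obviously compatible with the area of a fixed bounded domain varying continuously, and you would still need to produce a concretely analysable starting point. This is a genuine gap, not a routine step. The paper closes it quite differently and non-perturbatively: for each $t\ge t_0$ it constructs an explicit domain $\Omega_t$ of volume $t$ with first Dirichlet eigenvalue $1$, by taking a one-parameter family of ``stadium'' domains $\Omega_L$ (a cylinder of length $L$ and radius $j_{n/2-1,1}$ with two hemispherical caps), noting that $\lambda_1(\Omega_L)$ decreases continuously from $1$ (at $L=0$) to a positive limit as $L\to\infty$, and rescaling $\Omega_L$ by $\sqrt{\lambda_L}$ to normalize the eigenvalue back to $1$ while the volume sweeps all of $[t_0,\infty)$. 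It then applies the Whitney approximation theorem to make the boundary analytic, and the Lax--Malgrange theorem to approximate the ground state on $\Omega_t$ by a global solution of $\Delta f+f=0$ in $C^1$ near $\partial\Omega_t$; the stability lemma (the paper's Lemma~\ref{lem:vol pert}) then gives a non-degenerate deterministic plane wave with a nodal domain of volume arbitrarily close to $t$, after which the probabilistic steps you describe take over. Without an explicit construction of this kind -- or a completed version of your second-order or connectedness argument -- the strict monotonicity claim is not established by your proposal.
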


It seems unlikely that there exist ``distinguished" numbers $t>0$ that accumulate positive proportions of nodal domain areas.
We thereupon conjecture that the limiting distribution has no atoms (cf. Conjecture \ref{conj:Psi lim dist gen band lim}):

\begin{conjecture}
\label{conj:Psi lim dist smooth spher}
Let $\Psi$ be the function prescribed by Theorem \ref{thm:limit exist spher harm}.

\begin{enumerate}

\item The set of jumps of $\Psi$ is empty $\Tc_{0} = \emptyset$, i.e. $\Psi$ is continuous.

\item The function $\Psi$ is everywhere differentiable on $(t_0,\infty)$ and its derivative
(the probability density of the limiting distribution of nodal areas) is strictly positive.

\end{enumerate}

\end{conjecture}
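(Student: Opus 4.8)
\medskip
\noindent\textbf{Proof strategy.}
The plan is first to pass to the scaling limit, and then to reduce both parts of the conjecture to a statement about a single random variable. The proof of Theorem~\ref{thm:limit exist spher harm} identifies $\Psi=\Psi_{2,1}$ with the cumulative distribution function of the Palm mark distribution $\nu$ of the stationary marked point process on $\R^{2}$ that attaches to each nodal domain of the limiting monochromatic random wave $F=F_{2,1}$ (Berry's random plane wave) its area. Since the nodal domains of $F$ tile $\R^{2}$ up to a null set and the mean area of a typical domain equals $1/c_{0}<\infty$, the measure $\nu$ is obtained by size-biasing in $t$ from the law $\tilde\nu$ of $\Phi(F):=\area(\omega_{0})$, where $\omega_{0}$ is the nodal domain of $F$ containing the origin, considered on the positive-probability event $E$ that $\omega_{0}$ is bounded; in particular $\nu$ and $\tilde\nu$ are mutually absolutely continuous on $(0,\infty)$, and on the range $(t_{0},\infty)$ (where $t$ is bounded away from $0$) atom-freeness, absolute continuity and strict positivity of the density transfer back and forth between them. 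It therefore suffices to study the single functional $\Phi$.

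For part (1) I would in fact prove the stronger statement that the law of $\Phi(F)$ is absolutely continuous, via a Malliavin-calculus criterion of Bouleau--Hirsch type. On $E$ the boundary $\partial\omega_{0}$ is almost surely a smooth Jordan curve along which $\nabla F$ does not vanish (non-degeneracy of smooth Gaussian fields), hence a compact regular level set containing no critical points; by Thom's isotopy lemma it deforms smoothly, keeping $\omega_{0}$ bounded, under $C^{1}_{\mathrm{loc}}$-small perturbations of $F$. Perturbing $F\mapsto F+sg$ along a Cameron--Martin direction $g$ (an element of the reproducing kernel Hilbert space of $F$, consisting of real-analytic functions), Hadamard's first variation formula for the area gives
\[
\frac{d}{ds}\,\Phi(F+sg)\Big|_{s=0}\;=\;-\int_{\partial\omega_{0}}\frac{g}{|\nabla F|}\,d\sigma ,
\]
so that the Cameron--Martin (Malliavin) gradient of $\Phi$ at $F$ is the Hilbert-space representative of the nonzero finite measure $|\nabla F|^{-1}\,d\sigma|_{\partial\omega_{0}}$. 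As $\partial\omega_{0}$ always has positive length, this gradient is almost surely non-zero, and the Bouleau--Hirsch criterion then yields a density for the law of $\Phi(F)$; in particular $\Psi$ is continuous, i.e.\ $\Tc_{0}=\emptyset$. The technical burden is to check that $\Phi$ belongs to the relevant Malliavin--Sobolev class (ray absolute continuity along the Cameron--Martin space with integrable derivative) in spite of being defined only on the open event $E$; this should follow from the implicit function theorem applied to the level set, uniform lower bounds for $|\nabla F|$ on a tube around $\partial\omega_{0}$, and Gaussian tail estimates for $F$ and its derivatives.

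For part (2), recall that the vanishing of $\Psi$ on $[0,t_{0})$ --- Faber--Krahn applied to a nodal domain, whose first Dirichlet eigenvalue is exactly $1$, so that its area is at least $\pi j_{0,1}^{2}=t_{0}$ --- and the strict monotonicity of $\Psi$ on $(t_{0},\infty)$ are already contained in Theorem~\ref{thm:limit exist spher harm}; together with part (1) this says that the support of $d\Psi$ is exactly $[t_{0},\infty)$. What remains is to promote ``absolutely continuous'' to ``$C^{1}$ with strictly positive derivative'' on $(t_{0},\infty)$. For this I would first upgrade the argument of part (1) to a density formula, expressing $\Psi'(t)$ as a conditional expectation of an explicit functional built from the co-area factor $|\nabla F|^{-1}\,d\sigma|_{\partial\omega_{0}}$, and showing that, by the regularity of this object along the Gaussian flow, $\Psi'$ extends continuously in $t$. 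Strict positivity at a prescribed $t>t_{0}$ would then be obtained from a \emph{quantitative}, finitely parametrised version of the Nazarov--Sodin barrier construction: exhibit a one-parameter family of local field configurations --- e.g.\ built around dilations of a Jordan domain of area $t$ realised as a nodal component of a fixed solution of the Helmholtz equation $\Delta u+u=0$ --- occurring on an event of probability bounded below and on which $\Phi$ sweeps an interval $(t-\delta,t+\delta)$ with parameter-derivative bounded away from $0$; feeding this family into the co-area formula lower-bounds $\Psi'$ on $(t-\delta,t+\delta)$.

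The decisive obstacle is this last step. The perturbation argument of part (1) is soft: it produces an $L^{1}$ density but no pointwise lower bound, while the Nazarov--Sodin barrier estimates, equally soft, only certify that $t$ lies in the closed support and do not by themselves prevent $\Psi'$ from vanishing on a subset of $(t_{0},\infty)$ of positive measure. Constructing an explicit, uniformly non-degenerate parametrised family of positive-probability configurations whose image under the area functional is a genuine interval around any given $t$, with controlled co-area Jacobian, is genuinely delicate --- which is why the statement is a conjecture. Even the atom-freeness of part (1) has not, to our knowledge, been carried out, since it requires pushing Malliavin-type arguments through the change-of-topology phenomena for nodal sets; and everywhere-differentiability (rather than the almost-everywhere differentiability implied by absolute continuity) is a further refinement that should come for free once a continuous positive density is in hand.
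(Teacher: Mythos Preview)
The statement you are addressing is labelled a \emph{conjecture} in the paper, not a theorem; the authors explicitly leave both parts open and offer no proof. So there is no ``paper's own proof'' to compare your attempt against. What you have written is not a proof either --- and, to your credit, you say so yourself in the last paragraph --- but rather an outline of how one might try to attack the conjecture, together with an honest list of the obstructions.

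On the substance of the outline: the reduction to the scaling limit and to the area $\Phi(F)$ of the nodal domain through the origin is sound, and the size-biasing relation between the Palm distribution and the law of $\Phi$ is correct (and harmless here since $t\ge t_{0}>0$). The Hadamard first-variation formula and the Bouleau--Hirsch heuristic for part~(1) are reasonable, but two genuine issues remain. First, the Cameron--Martin space of the monochromatic wave consists of global $L^{2}$-type solutions of the Helmholtz equation, and you must actually exhibit, for a.e.\ realisation, a direction $g$ in that space with $\int_{\partial\omega_{0}} g\,|\nabla F|^{-1}\,d\sigma\neq 0$; since for $\alpha=1$ the paper stresses that this space is \emph{not} dense in $C^{k}$ on compacts, this step needs a separate argument (e.g.\ via Lax--Malgrange approximation). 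Second, the functional $\Phi$ is only defined on the event $E$ and can blow up or change discontinuously along Cameron--Martin rays through topological transitions of the nodal set; placing $\Phi$ in the right local Sobolev--Malliavin class is not routine. For part~(2), you correctly identify that neither the soft Malliavin argument nor the Nazarov--Sodin barrier construction yields a pointwise lower bound on the density; your proposed ``quantitative parametrised family'' is a plausible line of attack, but as you note, making it work would be the heart of any proof and is presently open.

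In short: your write-up is an informed discussion of an open problem, not a proof, and should be presented as such.
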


Let us make a few remarks on Theorem \ref{conj:Psi lim dist smooth spher}. The spherical harmonics have a natural scale $1/l$, hence the natural area
scaling is $t/l^2$;
the area of a typical nodal domain is of order of magnitude $\frac{1}{l^2}$.
Since the spherical harmonics are eigenfunctions of the spherical Laplacian,
there is a lower bound on the nodal domains area (Faber-Krahn inequality), and hence
the fact that $\Psi(t)=0$ for $t<t_0$ is deterministic; in fact, every nodal domain contains a disc of diameter comparable to $1/l$.
One may formulate and prove similar results on the distribution of lengths of nodal domains boundaries, called the {\em nodal components} of $f_{l}$
(see also Section \ref{sec: final remarks}). 

All the other principal results of this paper are in the same spirit as Theorem \ref{thm:limit exist spher harm} but in different,
less specialized, settings. The two main settings are: Euclidian (or ``scale invariant") case, and band-limited ensembles on
manifolds; one recovers the former as scaling limits of the band-limited ensembles around every point of the given manifold.
Below we briefly describe the various settings of random ensembles of functions.

\subsection{Euclidian random fields}
Here we are interested in centred Gaussian functions $F:\R^n\rightarrow \R$; it is a well known fact
(Kolmogorov's Theorem) that the distribution of a centred Gaussian field is completely determined by its covariance kernel
$$
K(x,y)=\E [F(x)F(y)].
$$
We will be interested in {\em isotropic} fields i.e. the fields such that
$$K(x,y)=K(|x-y|),$$ which means that $F$ is invariant under translations and rotations. From now on we assume that all our fields in $\R^n$ are isotropic,
moreover, we will also assume that $F$ is normalized so that  $K(0)=\E [F^2(x)] =1.$

It is known that the such covariance kernel $K$ can be written as  Fourier transform of a measure $d\rho$, called the {\em spectral measure}. In many
cases it is more convenient to describe the field $F$ in term of its spectral measure instead of the covariance kernel. Given the spectral measure, there is an alternative way of constructing the random function $F$. It can be constructed as the Gaussian vector in the Hilbert space $\mathcal{H}$ which is Fourier image of the the symmetric space $L^2_{\mathrm{sym}}(d\rho)$. In particular this means that if $\phi_k$ is an orthonormal basis in $\mathcal{H}$, then $F=\sum \zeta_k \phi_k$, where $\zeta_k$ are i.i.d Gaussian random variables.
One of the most important examples which motivates the paper is the random plane wave:

\begin{definition}
The {\em random plane wave} or the {\em monochromatic wave} with energy $E=k^2$ is the centred Gaussian field on $\R^{2}$
with the covariance kernel $$K(x,y)=J_0(k|x-y|),$$ where $J_0$ is the zeroth Bessel function.
\end{definition}

It is possible to check that the random plane wave is a solution of Helmholz equation
\begin{equation}
\label{eq:Helmholz}
\Delta f+k^2 f=0.
\end{equation}
One may think of the random plane wave as a ``random''
solution of \eqref{eq:Helmholz}. One possible way to quantify this statement is to notice that the functions
$J_{|n|}(kr)e^{i n \theta}$ form an orthonormal basis in the space of all $L^2$ solutions.
Hence it is natural to write its random Gaussian solution as
\begin{equation}
\label{eg: def RPW}
F(x)=\sum_{n=-\infty}^\infty c_n J_{|n|}( kr)e^{i n \theta},
\end{equation}
where the coefficients $c_n$ are standard Gaussians, independent save to the condition $c_{-n}=\bar{c}_n$. More details about the construction of the
random plane wave and its connection with the random spherical harmonic could be found
in ~\cite{NaSo09}.

A direct computation shows that the covariance kernel of this function is indeed $K(x,y)=J_0(k|x-y|)$. The spectral measure of $F$ is the normalized
Lebesgue measure on the  circle of radius $k$.
Since plane waves with different values of $k$ differ by the scaling, it is natural to fix $k=1$.
The description in terms of the spectral measure has a natural generalization to the higher dimensions:

\begin{definition}
The random plane wave in $\R^n$ is the Gaussian field whose spectral measure is the normalized $(n-1)$-dimensional Lebesgue measure on $S^{n-1}\subset
\R^n$.
\end{definition}

In Theorem \ref{thm:scal invar exist} below we will show that under relatively mild
conditions on the spectral measure, an analogue of Theorem \ref{thm:limit exist spher harm} holds
for $F$. Theorem \ref{thm:scal invar exist} will be proved in great generality, though the most important, relevant for Theorem \ref{thm:limit exist
spher harm}, is the case of the random monochromatic plane-wave, of significant importance by itself.

\subsection{Band-limited functions}

Let $(\M,g)$ be a compact Riemanian $n$-manifold,
then the eigenfunctions of the Laplacian $\{\phi_i(x)\}_{i\ge 1}$ form an orthonormal basis of $L^{2}(\M)$.
We denote the square roots of eigenvalues by $0=t_0\le t_1\le t_2 \ldots $
i.e. satisfying
$$
\Delta \phi_i+t_i^2\phi_i=0.
$$

Fix $\alpha\in [0,1)$ and define the $\alpha$-band-limited functions
\begin{equation}
\label{eq:f band lim def}
f(x) =f_{\alpha;T}(x)= \sum\limits_{\alpha T \le t_{j} \le T} c_{j}\phi_{j}(x).
\end{equation}
where $c_{j}$ are independent real Gaussian variables of mean $0$ and variance $1$. For $\alpha=1$ we define $f_{1;T}$ by
$$
f(x) =f_{1;T}(x)= \sum\limits_{T-\eta(T) \le t_{j} \le T} c_{j}\phi_{j}(x),
$$
where $\eta(T)$ is a growing function such that $\eta(T)=o(T)$.

The random spherical harmonic \eqref{eq:fl harm def} defined above is a $\alpha=1$ band-limited function on the unit sphere $\Sc^{2}$ with $\alpha=1$ and $\eta(T)=O(T^{1/2})$.
For the total number of nodal domains of the band limited functions 
Nazarov and Sodin proved that for every $\alpha\in [0,1]$, $n\ge 2$ there exist a constant $c(n,\alpha)>0$
(``Nazarov-Sodin constant"\footnote{Note the different normalization as compared to ~\cite{SW}}) satisfying
\begin{equation}
\label{eq:NS result band lim}
\E\left[\left| \frac{\nod_{\Omega}(f)}{T^{n}} -
c(n,\alpha) \cdot \vol_{n}(\M) \right| \right]\rightarrow 0.
\end{equation}
Sarnak and Wigman ~\cite{SW} refined the latter result \eqref{eq:NS result band lim} for counting the number of nodal domains
(or components) of $f$ of a given topological class; they also found an elegant way to formulate it in terms of 
convergence of random probability measures consolidating all topological types into a universal deterministic probability measure that 
conserves all the topologies.
We will show below that the result similar to Theorem \ref{thm:limit exist spher harm} holds for band-limited functions.

\begin{theorem}
\label{thm:limit exist band lim}

Let $f=f_{\alpha;T}$ be a band-limited function on some $n$-dimensional compact manifold $\M$.
There exists a monotone non-decreasing function
$$
\Psi=\Psi_{n;\alpha}:(0,\infty)\rightarrow\R_{+ },
$$
continuous outside a countable set of jumps $\Tc_{0} = \Tc_{0;n;\alpha}=\{t_{k}\}_{k=1}^{\infty}$,
so that for all $t\in \R\setminus\Tc_{0}$ we have
\begin{equation}
\label{eq:lim exist band lim}
\E\left[\left|  \frac{\nod_{\Omega}(f;t/T^{n/2})}{\nod_{\Omega}(f)} - \Psi(t) \right|\right] \rightarrow 0.
\end{equation}

\end{theorem}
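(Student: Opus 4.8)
The plan is to follow the Nazarov--Sodin paradigm, as already used for \eqref{eq:NS result band lim}, but tracking an extra parameter $t$ that records the volume of each nodal domain. The key point is that the quantity $\nod_{\Omega}(f;t/T^{n/2})$ is a \emph{local} counting functional: it counts connected components of $\{f\ne 0\}$ of volume below a threshold, and -- after rescaling $x\mapsto x/T$ around a point $x_0\in\M$ -- the rescaled field converges in a suitable $C^{k}$-sense to a stationary Gaussian limit $F_{x_0}$ on $\R^n$ whose spectral measure is supported on (a subset of) the unit sphere. For the band-limited ensemble the limit is, up to the $O(1)$ geometry of $(\M,g)$ at $x_0$, one of the scale-invariant fields covered by Theorem~\ref{thm:scal invar exist} (the monochromatic wave when $\alpha=1$). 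First I would set up an integral-geometric sandwich: write
\begin{equation*}
\nod_{\Omega}(f;t/T^{n/2}) = \int_{\M} \rho_{T}(f,x,t)\, d\vol(x) + (\text{error}),
\end{equation*}
where $\rho_T(f,x,t)$ is (roughly) the reciprocal of the volume of the component of $x$ when that volume is $<t/T^{n/2}$ and $0$ otherwise, and control the error by the number of components meeting a thin neighbourhood of a fixed mesh -- exactly the ``pullback/unstable-component'' bookkeeping of Nazarov--Sodin, which is insensitive to the volume threshold.

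The second step is the local limit: by the Nazarov--Sodin sampling lemma, for a small cube $Q$ the expected number of components of $f$ inside $Q$ of rescaled volume $<t$ converges, after normalizing by $\vol(Q)$ and $T^n$, to a deterministic density $\gamma_{n;\alpha}(t)$ obtained from the limiting field $F$; concretely $\gamma_{n;\alpha}(t)=\E[\rho_1(F,0,t)]$ where now the expectation is over the \emph{scale-invariant} field. Here one must know that the volume of the nodal component of $F$ through $0$ is an a.s. finite random variable with a well-defined distribution function; this follows from the same non-degeneracy and ergodicity input (Bulinskaya-type lemma ruling out degenerate zero sets, plus spectral-measure non-degeneracy) that underlies Theorem~\ref{thm:scal invar exist}, applied to the event that the component is bounded and has volume $<t$. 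Define $\Psi(t)=\gamma_{n;\alpha}(t)/\gamma_{n;\alpha}(\infty)=\gamma_{n;\alpha}(t)/c(n,\alpha)$; dividing \eqref{eq:NS result band lim} into the numerator estimate and using $\vol_n(\M)$ cancels between numerator and denominator gives \eqref{eq:lim exist band lim} for every continuity point $t$ of $\Psi$. Monotonicity of $\Psi$ is immediate since $t\mapsto \rho_1(F,0,t)$ is monotone in $t$ pointwise, and $\Psi\le 1$ by construction.

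The third step is the structural analysis of $\Psi$ itself: it is the distribution function of the random variable $V=\vol_n(\omega(F,0))$ \emph{conditioned to be finite} (equivalently, $\Psi(t) = \Pr[V<t]\cdot(\text{density normalization})$, reweighted by $1/V$), so $\Psi$ is automatically right-continuous and non-decreasing, and the exceptional set $\Tc_0$ is precisely the set of atoms of the corresponding (size-biased) law, which is at most countable. That $\Psi$ takes values in $\R_+$ with $\Psi(0^+)$ possibly positive or zero is not asserted, so nothing more is needed there. The expectation convergence (not just in probability) comes, as in Nazarov--Sodin, from a uniform $L^1$ / second-moment bound: $\nod_{\Omega}(f;t/T^{n/2})/\nod_{\Omega}(f)\in[0,1]$ is bounded, so convergence in probability upgrades to convergence in mean by dominated convergence, and convergence in probability follows from a variance estimate on $\nod_{\Omega}(f;t/T^{n/2})/T^n$ obtained by the standard covering/almost-independence argument over a grid of cubes of side $\asymp 1/T$.

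The main obstacle I expect is the \emph{uniformity of the local limit in the threshold $t$}: one needs the convergence $\E[\rho_T(f,x,t)]\to\gamma_{n;\alpha}(t)$ to hold for each fixed continuity point $t$ \emph{and} a soft equicontinuity/monotonicity argument to pass from a countable dense set of such $t$ to the full complement of $\Tc_0$, all while the rescaled component volume is only controlled up to boundary effects (components clipped by $\partial Q$). Handling the clipped components -- showing their contribution is $o(T^n)$ uniformly in $t$ -- is the same technical heart as in the original Nazarov--Sodin proof, but here it must be done with the volume cutoff in place; monotonicity in $t$ is the tool that makes this uniformity cheap, since a monotone family of distribution functions converging on a dense set converges at every continuity point of the limit.
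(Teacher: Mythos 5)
Your proposal follows essentially the same Nazarov--Sodin route as the paper: localize around each $x\in\M$, couple the rescaled band-limited field $f_{x;T}$ to its universal scaling limit $\gfr_{n,\alpha}$ (via Lemma~\ref{lem:E[|fxL-Fx|<alpha]}), run an integral-geometric sandwich together with the ergodic theorem to prove the Euclidean limit (Theorem~\ref{thm:scal invar exist}), excise small/long components, and then integrate over $\M$ with an Egorov-type uniformization and the a priori Kac-Rice bound (Lemma~\ref{lem:Kac-Rice band lim bnd unif}) to upgrade to $L^1$. The size-biased/$1/V$-density reformulation of $\Psi$ and the monotone-convergence-on-a-dense-set trick are different phrasings of the same things, and your diagnosis of the role of continuity points is correct.

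The one place where your proposal is genuinely thin --- and where the existing Nazarov--Sodin machinery does \emph{not} just go through --- is your claim that the ``pullback/unstable-component bookkeeping \ldots\ is insensitive to the volume threshold.'' It is not. When one passes between $f_{x;T}$ and the coupled $\gfr_{n,\alpha}$ with $\|f_{x;T}-\gfr_{n,\alpha}\|_{C^1}<b$, a nodal component $\gamma$ moves by $O(b)$ in the normal direction, so the \emph{volume} of the enclosed domain changes by an amount of order $b\cdot\vol_{n-1}(\gamma)$ (Lemma~\ref{lem:vol pert} in the paper, part (2)). Since $\vol_{n-1}(\gamma)$ is not uniformly bounded, this error is \emph{not} automatically negligible and can move a domain across the threshold $t$ by an uncontrolled amount. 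The paper handles this with an additional ingredient you do not mention: Lemma~\ref{lem:Delta 5 small}, which shows that with high probability all but $\eta R^n$ domains of $\gfr_{n,\alpha}$ in $B(2R)$ have boundary $(n-1)$-volume at most some fixed $Q$. Its proof is itself a two-step argument --- bound the number of boundary components per domain via the Sarnak--Wigman nesting graph and then bound each component's $(n-1)$-volume via Kac-Rice --- and it is what makes the $t\pm\xi$ adjustments in Proposition~\ref{prop:bi(R-1,Fx)<=bi(R/L,fL)<=bi(R+1,Fx)} actually work. Without this you cannot justify the sandwich $\nod(\gfr_{n,\alpha},t-\xi;R-1)-\eta R^n\le\nod(f_{x;T},t;R)\le\nod(\gfr_{n,\alpha},t+\xi;R+1)+\eta R^n$, and the rest of your argument (which is otherwise sound) has no hypothesis to stand on.
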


\noindent Importantly, the function $\Psi$ prescribed by Theorem \ref{thm:limit exist band lim} does not depend on the manifold.

\subsection{Some properties of the limiting distribution $\Psi_{n;\alpha}$}

Here we investigate the most basic properties of $\Psi_{n;\alpha}$, i.e. its monotonicity.
We need to distinguish between $\alpha<1$, where the corresponding distribution function $\Psi_{n;\alpha}$
is strictly positive and increasing everywhere (Theorem \ref{thm:alpha<1 Psi increas}), and $\alpha=1$, where the behaviour of the function
$\Psi_{n;1}$ is more complicated (Theorem \ref{thm:alpha=1 Psi increas t>t0}).

\begin{theorem}
\label{thm:alpha<1 Psi increas}
For every $n\ge 2$, $\alpha < 1$ the function $\Psi_{n;\alpha}$
as above is strictly increasing on $\R_{+}$.
\end{theorem}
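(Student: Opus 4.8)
The plan is to show that for every $t > 0$ the limiting law $\Psi_{n;\alpha}$ assigns positive mass to every interval $(t-\delta, t+\delta)$, which together with the already-established monotonicity forces strict monotonicity. The natural mechanism, in the spirit of Nazarov--Sodin, is a \emph{barrier} or \emph{local perturbation} argument in the scaling limit: one produces, with positive probability, a stable nodal domain whose volume lies in a prescribed small window. Concretely, I would work with the translation-invariant scaling-limit field $F$ on $\R^n$ whose spectral measure is supported on the annulus $\{\alpha \le |\xi| \le 1\}$ (for $\alpha<1$ this annulus is genuinely $n$-dimensional, which is the crucial structural difference from the $\alpha=1$ case). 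Because the spectral measure contains an open set, the field $F$ has a rich reproducing-kernel Hilbert space; in particular, using the density of the RKHS one can find a smooth deterministic function $h$ with a nondegenerate compact nodal domain of any desired volume $v$, and $h$ can be approximated in $C^1$ on a large ball by functions in the support of the law of $F$.

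The steps, in order, would be: (1) Fix a target volume $v$ and construct an explicit smooth function $g_v$ on $\R^n$ having a bounded nodal domain $\omega_v$ with $\vol(\omega_v) = v$, with $\nabla g_v \ne 0$ on $\partial\omega_v$, so that the domain is stable under small $C^1$ perturbations and its volume depends continuously on the perturbation. For $\alpha<1$ one has full flexibility here because the admissible fields are not constrained by a single Helmholtz-type equation. (2) Use the structure of the Gaussian field $F$ — specifically that the event ``$\|F - g_v\|_{C^1(B_R)} < \varepsilon$'' has positive probability for suitable $R$ and $\varepsilon$, which follows from $g_v$ (or a good approximant) lying in the support of $F$ restricted to $B_R$, itself a consequence of the spectral measure having an interior point. (3) Conclude that with positive probability $F$ has a nodal domain with volume in $(v-\eta, v+\eta)$, and moreover — by an ergodic/mean-value argument over translates, exactly as in Nazarov--Sodin's proof of \eqref{eq:NS result band lim} — that the \emph{expected density} of such nodal domains is positive. (4) Transfer this from the scaling limit $F$ to the band-limited ensemble $f_{\alpha;T}$ on $\M$ via the local convergence of $f_{\alpha;T}$ (rescaled around a point) to $F$, already underlying Theorem~\ref{thm:limit exist band lim}, to deduce $\Psi_{n;\alpha}(v+\eta) - \Psi_{n;\alpha}(v-\eta) > 0$; since $v$ was arbitrary this gives strict monotonicity on all of $\R_+$, including near $0$ where no Faber--Krahn obstruction is present when $\alpha<1$.

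The main obstacle I expect is Step (1)--(2): producing the perturbing function \emph{inside the support} of the field while controlling the nodal domain's volume. Two issues must be handled carefully. First, the support of the Gaussian measure of $F\big|_{B_R}$ in $C^1(B_R)$ must be identified — one needs that it contains a $C^1$-dense subset of the solution space (or at least a function with the desired nodal geometry); this is where the hypothesis $\alpha<1$ enters decisively, since the $n$-dimensional spectral annulus makes the RKHS large, whereas for $\alpha=1$ the spectral measure lives on a hypersurface and the analogous statement fails, which is precisely why Theorem~\ref{thm:alpha=1 Psi increas t>t0} is stated separately with the Faber--Krahn lower bound $t_0$. Second, one must ensure the constructed nodal domain is the full nodal domain (not merely contained in one) and that its volume varies continuously and surjectively onto an interval around $v$ as a parameter is tuned; this is a soft transversality argument but requires checking that no unbounded or merging components spoil the count, which can be arranged by working on a large ball and using a cutoff together with the stability of transversal zero sets. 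The remaining steps (3)--(4) are then routine given the machinery already developed for Theorem~\ref{thm:limit exist band lim}.
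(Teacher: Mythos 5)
Your proposal matches the paper's argument essentially step for step: the paper proves the Euclidean version (Theorem \ref{thm:scal invar increase gen}) by using $(\rho 4^{*})$ to establish density of the reproducing-kernel Hilbert space $\mathcal{H}(\rho)$ in $C^k(Q)$, constructing a deterministic $f_t\in\mathcal{H}$ with a stable nodal domain of volume near $t$, decomposing $F=c_0 f_t+\phi$ to get positive probability of such a domain near the origin via Lemma \ref{lem:vol pert}, and then packing $\R^n$ with translated copies to get positive density; transferring to $f_{\alpha;T}$ is then just the universality of $\Psi$ from Theorem \ref{thm:limit exist band lim}. The only cosmetic difference is that you phrase the positive-probability step as ``$g_v$ lies in the support of the law of $F$'' while the paper uses the explicit Gaussian decomposition $F=c_0 f_t+\phi$ (which conveniently avoids needing $c_0\approx 1$, since nodal sets are scale-invariant); this is the same fact dressed differently.
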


\begin{theorem}
\label{thm:alpha=1 Psi increas t>t0}
For every $n\ge 2$ the function $\Psi_{n;1}$ vanishes on $[0,t_{0}]$ where
$$
t_{0}=t_{0}(n)=\frac{\pi^{n/2}}{\Gamma(n/2+1)}j^n_{n/2-1,1}
$$
is the volume of the ball of radius $j_{n/2-1,1}$ -- the first zero of the Bessel function $J_{n/2-1}$.
Moreover, $\Psi_{n;1}$ is strictly increasing on $(t_0,\infty)$.
\end{theorem}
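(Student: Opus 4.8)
\noindent\emph{Step 0: reduction to the limiting field.}
The plan is to read off $\Psi_{n;1}$ from the scaling limit of $f_{1;T}$, which is the monochromatic random wave $F$ on $\R^{n}$: the Gaussian, isotropic field with spectral measure the normalised surface measure on $S^{n-1}$, so that $\Delta F+F=0$ almost surely. If $\nu$ denotes the translation-invariant ``expected number of nodal domains of $F$ per unit volume whose volume lies in a given Borel subset of $(0,\infty)$'' --- which is well defined by the ergodic averaging underlying Theorems \ref{thm:limit exist spher harm} and \ref{thm:limit exist band lim} --- then $\Psi_{n;1}(t)=\nu\bigl((0,t)\bigr)/c(n,1)$, where $c(n,1)=\nu\bigl((0,\infty)\bigr)>0$ is the Nazarov--Sodin constant. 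Hence it suffices to prove: (a) $\supp\nu\subseteq[t_{0},\infty)$, which gives $\Psi_{n;1}\equiv 0$ on $[0,t_{0}]$; and (b) $\nu\bigl((t_{1},t_{2})\bigr)>0$ for every $t_{0}<t_{1}<t_{2}$, which gives strict monotonicity on $(t_{0},\infty)$.

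\noindent\emph{Step 1: the Faber--Krahn floor.}
Let $\omega$ be a nodal domain of $F$. Then $F|_{\omega}$ solves $\Delta(F|_{\omega})+F|_{\omega}=0$ with zero Dirichlet data on $\partial\omega$ and does not change sign, so it is the principal Dirichlet eigenfunction of $\omega$, i.e.\ $\lambda_{1}(\omega)=1$. The Faber--Krahn inequality gives $1=\lambda_{1}(\omega)\ge\lambda_{1}(B^{*})$, where $B^{*}$ is the Euclidean ball with $\vol(B^{*})=\vol(\omega)$; since $\lambda_{1}(B_{r})=(j_{n/2-1,1}/r)^{2}$ this forces $\vol(\omega)\ge\vol\bigl(B_{j_{n/2-1,1}}\bigr)=t_{0}$. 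Thus $\supp\nu\subseteq[t_{0},\infty)$, which is (a). (If one also wants $\nu(\{t_{0}\})=0$, note that equality in Faber--Krahn forces $\omega$ to be a round ball of radius $j_{n/2-1,1}$, hence forces $F$ to vanish identically on a round sphere, a probability-zero event for the non-degenerate field $F$.)

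\noindent\emph{Step 2: strict monotonicity via barriers.}
Fix $t_{0}<t_{1}<t_{2}$. I would use the same support/approximation tool as in Theorem \ref{thm:alpha<1 Psi increas}: for every entire solution $g$ of $\Delta g+g=0$ on $\R^{n}$, every ball $B_{\rho}$, every integer $k$ and every $\varepsilon>0$, with positive probability $\|F-g\|_{C^{k}(B_{\rho})}<\varepsilon$; equivalently the support of $F$ for local $C^{k}$-convergence contains every such $g$ --- because the reproducing kernel of $F$ generates a set of Herglotz wave functions that is $C^{k}$-dense among Helmholtz solutions on balls (a Runge/Vekua-type approximation for the Helmholtz operator). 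It then remains to produce a single entire Helmholtz solution $g$ with a nodal domain $\omega_{0}$, $\overline{\omega_{0}}$ compact, such that $\vol(\omega_{0})\in(t_{1},t_{2})$ and $\nabla g\neq 0$ on $\partial\omega_{0}$; the last condition makes $\omega_{0}$ structurally stable, so every Helmholtz solution that is $C^{1}(B_{\rho})$-close to $g$ (for $B_{\rho}\supset\overline{\omega_{0}}$) again has a nodal domain of volume in $(t_{1},t_{2})$. Given such $g$, the support tool yields positive probability that $F$ has a nodal domain of volume in $(t_{1},t_{2})$ inside $B_{\rho}$, and by translation invariance this positive probability is a lower bound for $\nu\bigl((t_{1},t_{2})\bigr)$; this is (b).

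\noindent\emph{Step 3: the barrier, and the main obstacle.}
To build $g$, first choose a bounded smooth domain $W$ with $\lambda_{1}(W)=1$ and $\vol(W)\in(t_{1},t_{2})$. Such $W$ exist for every prescribed volume $>t_{0}$: for instance take an elongated ``capsule'' (a round cylinder of radius $\rho$ capped by hemispheres) with $\rho\in\bigl(j_{n/2-3/2,1},\,j_{n/2-1,1}\bigr)$ and length tuned so that $\lambda_{1}=1$; as $\rho$ ranges over this interval the volume sweeps continuously from $t_{0}$ to $\infty$, and the ground state stays of order one along the cylindrical bulk. By the Hopf lemma $\partial_{\nu}u_{W}\neq 0$ on $\partial W$. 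The crux --- and the step I expect to be the main obstacle --- is to pass from $W$ to a \emph{globally defined} entire Helmholtz solution $g$ whose nodal set has a connected component that is a $C^{1}$-small perturbation of $\partial W$, so that the enclosed nodal domain is structurally stable and still has volume in $(t_{1},t_{2})$. One cannot simply approximate the ground state $u_{W}$, since $u_{W}$ does not extend past $\partial W$ (the Helmholtz Cauchy problem is ill-posed), and the obvious explicit entire solutions --- e.g.\ products $\cos(\beta x_{1})\cos(\gamma x_{2})$ with $\beta^{2}+\gamma^{2}=1$ --- do have compact nodal domains but carry critical points of the solution on their boundaries (the grid corners), hence are \emph{not} $C^{1}$-stable, while naively resolving those corners opens the domains up to infinity. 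Making the global realisation rigorous --- in the spirit of prescribed-nodal-set constructions of Enciso--Peralta-Salas, with enough quantitative control near $\partial W$ to keep the realised stable domain inside the \emph{open} interval $(t_{1},t_{2})$ --- is the heart of the matter. The contrast with Theorem \ref{thm:alpha<1 Psi increas} is instructive: for $\alpha<1$ the field is not an eigenfunction, there is no Faber--Krahn floor, arbitrarily small structurally stable nodal domains are easy, and $\Psi_{n;\alpha}$ is strictly increasing on all of $\R_{+}$; the single-eigenvalue constraint at $\alpha=1$ is precisely what produces the threshold $t_{0}>0$ and makes monotonicity above it delicate.
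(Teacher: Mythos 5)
You have the right skeleton and you correctly identify every ingredient the paper uses — the reduction to the monochromatic wave, the Faber--Krahn floor for $\supp\nu\subseteq[t_{0},\infty)$, a continuous capsule family $\{\Omega_t\}$ of domains with $\lambda_{1}=1$ and $\vol\in(t_{0},\infty)$, Hopf's lemma for boundary nondegeneracy, a Lax--Malgrange/Runge-type approximation, and a positive-probability-plus-translation-invariance finish. But you explicitly flag the passage from the eigenfunction on $W$ to a \emph{global} Helmholtz solution as the ``heart of the matter'' and do not close it, so what you have written is not a proof of the monotonicity half.

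The obstacle you name (``$u_W$ does not extend past $\partial W$, the Helmholtz Cauchy problem is ill-posed'') is exactly what the paper's extra step removes, and it is worth seeing why your worry is misplaced. Before approximating, the paper replaces $W$ by a Whitney-approximated domain with \emph{real-analytic} boundary (rescaled so that $\lambda_1=1$ again, with volume still $\epsilon$-close to the target). For an analytic boundary, the Dirichlet eigenfunction $g_t$ of the analytic operator $\Delta+1$ is analytic up to the boundary and therefore extends analytically across $\partial\Omega_t$ to a tubular neighbourhood $B$; ill-posedness of the Cauchy problem is a statement about stability for rough data, not an obstruction to this local analytic continuation. On $B$, Hopf gives $\inf_B\|\nabla g_t\|>0$. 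Lax--Malgrange then supplies an \emph{entire} solution $f_t$ of $\Delta f+f=0$ with $\|f_t-g_t\|_{C^1(B)}<b$ for any $b>0$, and Lemma \ref{lem:vol pert} (your ``structural stability'') transfers the nodal component across $\partial\Omega_t$ to a nodal domain of $f_t$ of volume $\epsilon$-close to $\vol(\Omega_t)$. After that, the argument is word-for-word the positive-probability argument of Theorem \ref{thm:scal invar increase gen}: normalise $f_t$ in $\mathcal H$, write $F=c_0 f_t+\phi$ with $\phi$ independent of $c_0$, and use that a small $C^1$ ball around $c_0 f_t$ has positive probability. So the missing bridge in your Step 3 is precisely ``Whitney $\Rightarrow$ analytic boundary $\Rightarrow$ analytic extension of $g_t$ past $\partial\Omega_t$'', after which Lax--Malgrange applies on a neighbourhood of the boundary rather than on all of $\Omega_t$. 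With that one sentence inserted, your outline becomes the paper's proof.

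One small additional caveat: the approximation you invoke (``with positive probability $\|F-g\|_{C^k(B_\rho)}<\varepsilon$ for every entire Helmholtz solution $g$'') is a statement about the support of the Gaussian measure, which is the closure of the Cameron--Martin space $\mathcal H$ (Herglotz wave functions), not of all entire Helmholtz solutions; one needs, as you gesture at, the Runge-type density of $\mathcal H$ in $C^1(B_\rho)$-topology among local Helmholtz solutions to pass from the Lax--Malgrange function to an element of $\mathcal H$. The paper uses this implicitly by writing $F=c_0 f_t+\phi$ with $f_t\in\mathcal H$, so it is worth stating explicitly in a self-contained write-up.
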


Motivated by similar arguments to Conjecture \ref{conj:Psi lim dist smooth spher} it is only natural to conjecture the following:

\begin{conjecture}
\label{conj:Psi lim dist gen band lim}


The function $\Psi_{n;\alpha}$ is continuous, everywhere differentiable. For $\alpha<1$ the derivative $\frac{d\Psi_{n;\alpha}(t)}{dt}>0$
is everywhere positive, whereas for $\alpha=1$ the same holds for $t>t_{0}$.

\end{conjecture}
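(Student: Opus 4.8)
The plan is to pass to the local scaling limit. Recall from the proof of Theorem~\ref{thm:limit exist band lim} that $\Psi_{n;1}$ is the cumulative distribution function of the volume of a ``typical'' nodal domain of the monochromatic random wave $F$ on $\R^{n}$ -- the centred stationary Gaussian field whose spectral measure is the uniform probability measure on the unit sphere $S^{n-1}\subset\R^{n}$; in particular $\Psi_{n;1}(b)-\Psi_{n;1}(a)$ records the limiting proportion, over exhausting balls $B_{R}$, of nodal domains of $F$ of volume in $[a,b)$. In these terms the theorem asserts (i) that $F$ has no nodal domain of volume $<t_{0}$, and (ii) that for every $t_{0}<a<b$ there is a positive density of nodal domains of $F$ of volume in $(a,b)$.

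For (i): since $F$ solves the Helmholtz equation $\Delta F+F=0$ on all of $\R^{n}$, on any bounded nodal domain $\omega$ the restriction $F|_{\omega}$ lies in $H_{0}^{1}(\omega)$, does not change sign, and satisfies $-\Delta(F|_{\omega})=F|_{\omega}$; hence it is the Dirichlet ground state of $\omega$, so $\lambda_{1}(\omega)=1$. By the Faber--Krahn inequality $\lambda_{1}(\omega)\ge\lambda_{1}(B)$ with $B$ the ball of volume $\vol(\omega)$, so $\lambda_{1}(B)\le 1$, i.e. $B$ has radius $\ge j_{n/2-1,1}$ and $\vol(\omega)=\vol(B)\ge t_{0}$. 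Since $\nod(f;t)$ counts domains of volume \emph{strictly} less than $t$, it follows that $\nod(F|_{B_{R}};t)=0$ for every $t\le t_{0}$, hence $\Psi_{n;1}\equiv 0$ on $[0,t_{0}]$. (The equality case of Faber--Krahn further shows $\vol(\omega)=t_{0}$ only if $\omega$ is a round ball, whence a.s.\ $\vol(\omega)>t_{0}$ strictly for every nodal domain, the event that a translate of the radial ground state $|x|^{1-n/2}J_{n/2-1}(|x|)$ be a restriction of $F$ having probability zero; this is not needed here, only for Conjecture~\ref{conj:Psi lim dist gen band lim}.)

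For (ii) I would argue, as in the proof of Theorem~\ref{thm:alpha<1 Psi increas}, that it suffices to exhibit, for every prescribed $v\in(t_{0},\infty)$, an \emph{entire} solution $u$ of $\Delta u+u=0$ on $\R^{n}$ with a bounded nodal domain $\omega_{0}$ of volume arbitrarily close to $v$ and with $\nabla u\neq 0$ on $\partial\omega_{0}$ (so that any $C^{1}$-small perturbation of $u$ retains a nodal domain of nearly the same volume): by the Nazarov--Sodin support lemma for monochromatic waves -- $F$ restricted to a large ball $C^{1}$-approximates any such $u$ with positive probability, because the spectral measure charges all of $S^{n-1}$ -- together with stationarity and the covering/ergodic argument already used for Theorem~\ref{thm:limit exist band lim}, one then gets a positive density of nodal domains of $F$ of volume in $(a,b)$, hence $\Psi_{n;1}(b)>\Psi_{n;1}(a)$. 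To build $u$, first pick a bounded domain $D\subset\R^{n}$ with real-analytic boundary, $\lambda_{1}(D)=1$, and $\vol(D)$ as close to $v$ as desired. Such $D$ exist for every target in $(t_{0},\infty)$: among the prolate spheroids of revolution rescaled so that $\lambda_{1}=1$, the volume equals $t_{0}$ for the ball and tends to $\infty$ as the aspect ratio grows (the constraint $\lambda_{1}=1$ keeps the transverse radius bounded away from $0$ while the length diverges), so by continuity and the intermediate value theorem every value of $(t_{0},\infty)$ occurs -- strictly above $t_{0}$ for non-balls by the strict Faber--Krahn inequality. The Dirichlet ground state $\phi_{D}>0$ solves $\Delta\phi_{D}+\phi_{D}=0$ on $D$, vanishes on $\partial D$, and has $\nabla\phi_{D}\neq 0$ on $\partial D$ by Hopf's lemma; as $\partial D$ is analytic, the operator has analytic coefficients, and the boundary data vanish, $\phi_{D}$ extends across $\partial D$ to a Helmholtz solution on a neighbourhood of $\overline{D}$ by the reflection principle for analytic elliptic equations, and this extension is $C^{1}(\overline{D})$-approximated by an entire Helmholtz solution $u$ via Runge approximation for $\Delta+1$ (valid by the unique continuation property); by non-degeneracy of $\{\phi_{D}=0\}$, this $u$ has a nodal domain $\omega_{0}$ with $\vol(\omega_{0})$ near $\vol(D)\approx v$ and $\nabla u\neq 0$ on $\partial\omega_{0}$, and being entire it is covered by the support lemma.

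The main obstacle is step (ii), and specifically the rigidity forced by $\alpha=1$: in contrast to Theorem~\ref{thm:alpha<1 Psi increas} we cannot take a compactly supported bump as our local model, but must realize volumes near $t_{0}$ through nearly round domains (the Faber--Krahn extremizers) and large volumes through slender ones, and then realize an abstract domain $D$ with $\lambda_{1}(D)=1$ as a nodal domain of a genuine entire Helmholtz solution. Confirming the aspect-ratio volume blow-up for the rescaled spheroids, and checking that the reflection principle and Runge approximation apply in this setting, are the points requiring care; the remainder is the classical Faber--Krahn inequality together with the Nazarov--Sodin machinery already developed in the paper.
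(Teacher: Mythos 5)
You are addressing a \emph{conjecture}, not a theorem: the paper explicitly labels this statement as Conjecture~\ref{conj:Psi lim dist gen band lim} and does not prove it. What the paper \emph{does} prove are Theorems~\ref{thm:alpha<1 Psi increas} and \ref{thm:alpha=1 Psi increas t>t0} (via Theorems~\ref{thm:scal invar increase gen} and \ref{thm:scal invar increase RWM}), which establish only the strict \emph{monotonicity} of $\Psi_{n;\alpha}$ on the relevant range, together with the vanishing of $\Psi_{n;1}$ below $t_0$. Your argument --- the Faber--Krahn step for $t<t_0$, and the construction of a deterministic entire Helmholtz solution with a prescribed-volume nondegenerate nodal domain, perturbed via the Nazarov--Sodin support lemma and a covering/ergodic argument --- is precisely the paper's proof of those theorems (with minor cosmetic differences: the paper uses capped cylinders rather than spheroids, and Lax--Malgrange rather than the reflection-principle-plus-Runge combination, but these are equivalent choices). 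So what you have is a correct blind reproduction of the proof of Theorem~\ref{thm:scal invar increase RWM}, not a proof of the conjecture.

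The gap is that the conjecture asserts strictly more than strict monotonicity, and none of the extra content is touched by your argument. A strictly increasing cumulative distribution function can still: (a) have atoms, i.e.\ fail continuity --- this is exactly the set $\Tc_0$ of jump points that Theorem~\ref{thm:limit exist band lim} is forced to excise; (b) be continuous but nowhere differentiable; (c) be differentiable a.e.\ with derivative vanishing on a set of full measure (a strictly increasing singular function). The conjecture asserts $\Tc_0=\emptyset$, everywhere-differentiability, and $\Psi'>0$, i.e.\ that the limiting law of nodal volumes is absolutely continuous with a strictly positive density on $(t_0,\infty)$. Exhibiting, for each target volume $v$, a positive \emph{probability} that a nodal domain near the origin has volume in $(v-\epsilon,v+\epsilon)$ gives a positive \emph{mass} for every open interval --- which is strict monotonicity --- but says nothing about how that mass distributes within the interval, and in particular cannot rule out atoms or a singular part. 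Your parenthetical about the equality case of Faber--Krahn shows that a.s.\ every individual nodal domain has volume $>t_0$, but this does not preclude the limiting measure from having an atom at $t_0$ (a positive density of domains with volumes accumulating at $t_0$ from above), let alone at other values of $t$; and it gives no handle at all on differentiability. To prove the conjecture one would need some genuinely quantitative control on the concentration of nodal-volume fluctuations --- something like an anti-concentration bound for $\vol(\omega)$ uniformly over domains --- which is not supplied by the perturbative argument and remains open.
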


\subsection{Main ideas and the plan of the paper}

Our general stategy is similar to ~\cite{NaSo15} and ~\cite{SW}; for the start we give some necessary background about the semiclassical behaviour of band-limited functions. Roughly speaking, we show that on a scale which is small but larger than $1/T$, a rescaled version $f_{\alpha;T}$ of the band-limited function could be approximated by its scaling limit $\gfr_{n,\alpha}$ defined on the tangent space. Two important facts are that $\gfr_{n,\alpha}$ is {\em universal}, that is independent of the manifold or a particular point on the manifold, and that the covariance kernel (or the spectral measure) of the scaling limit $\gfr_{n,\alpha}$ is known explicitly. This will allow us to prove the main results for $\gfr_{n,\alpha}$, and then use the convergence to prove the results for the band-limited functions.

In Section \ref{subsec: Kac-Rice} we formulate several Kac-Rice type results that will yield universal upper bounds on various quantities like the number of nodal domain or the number of nodal components.
In the next Section \ref{sec:scal invar} we discuss the behaviour of Gaussian fields on $\R^n$. We formulate and prove
Theorem \ref{thm:scal invar exist} stating that under very mild conditions on a random field, satisfied by $\gfr$, a Euclidean version
of \ref{thm:limit exist band lim} holds. The proof is based on {\em a priori} upper bounds given by Kac-Rice and ergodic theorem which gives the existence of the scaling limit of the volume distribution. At the end of the Section \ref{sec:scal invar} we prove two theorems about the distribution function $\Psi_{n,\alpha}$: Theorems \ref{thm:scal invar increase gen}  and \ref{thm:scal invar increase RWM}. They are Euclidean analogues of Theorems \ref{thm:alpha<1 Psi increas} and \ref{thm:alpha=1 Psi increas t>t0}. The first one covers the generic case $\alpha<1$ and the second one
applies on $\alpha=1$.

The proofs of both Theorems \ref{thm:scal invar increase gen} and \ref{thm:scal invar increase RWM} are rather similar. Our first goal is to show that there exists a {\em deterministic} function whose nodal domain containing the origin is of almost the required volume. Our second goalis to show that for all functions that are close to the postulated one in $C^1$-norm have nodal domains of similar volume (see Lemma \ref{lem:vol pert}). Finally we claim that the latter happens with positive probability. The translation invariance of the underlying random fields yields that, as the claimed result holds near origin with positive probability, it holds with positive density.

In Section \ref{sec:local results} we prove that since functions $f_{\alpha;T}$ and $\gfr_{n,\alpha}$ are close to each other (after coupling and rescaling), their respective numbers of nodal domains of restricted volume is close with high probability (this is quantified in Proposition \ref{prop:bi(R-1,Fx)<=bi(R/L,fL)<=bi(R+1,Fx)}).
In Section \ref{sec: global results} we prove all the main theorems of the paper. The proofs are based on {\em  semi-locality} of nodal domains. This means that most of the nodal domains of $f_{\alpha; T}$ are not too small nor too long, i.e. that the semi-local approximation by $\gfr_{n,\alpha}$ captures most of the nodal domains of $f_{\alpha;T}$. This allows to infer the results on $f_{\alpha;T}$ from the analogous results on $\gfr_{n,\alpha}$.
Finally, in Section \ref{sec: final remarks} we make some final remarks about the proof, in particular we explain that with some minor modification our methods imply similar results for the distribution of the surface volume of the nodal components or even for the joint distribution of the volumes of nodal domains and boundary volumes.

\subsection{Acknowledgements}

We would like to thank P. Sarnak for raising the question of nodal volumes and his interest in our work,
and M. Sodin for many useful discussions. The first author was partially funded by Engineering \& Physical Sciences
Research Council (EPSRC) Fellowship ref. EP/M002896/1.
The research leading to these results has received funding from the
European Research Council under the European Union's Seventh
Framework Programme (FP7/2007-2013), ERC grant agreement
n$^{\text{o}}$ 335141 (I.W.)

\section{Necessary background}

\subsection{Semiclassical properties of band-limited functions and their scaling limits}
\label{sec:semiclassical limit}

In this section we introduce a few facts about band-limited functions. The results are stated without proofs, for more detailed discussion we refer the readers to \cite{SW}[Section 2.1] and references therein.

For the band-limited function $f_{\alpha;T}$ we have the covariance function
$$
K_{\alpha; T}(x,y):= \E\left[f_{\alpha;T}(x)f_{\alpha;T}(y)\right]=\sum\limits_{\alpha T \le t_{j} \le T} \phi_{j}(x)\phi_{j}(y).
$$
The following semiclassical approximation, universal w.r.t. $\M$ holds (see ~\cite[Section 2.1]{SW} with case $\alpha=1$ due to
Canzani-Hanin ~\cite{CH,CH2}):
\begin{equation}
\label{eq:Kalpha asymp}
\widetilde{K_{\alpha}} (T;x,y) := \frac{1}{D_{\alpha}(T)}K_{\alpha} (T;x,y) = B_{n,\alpha}(T\cdot d(x,y)) + O\left(T^{-1}\right),
\end{equation}
uniformly for $x,y\in\M$, where $d(x,y)$ is the (geodesic) distance in $\M$ between $x$ and $y$,
\begin{equation*}
D_{\alpha}(T)=\frac{1}{\vol(\M)} \int\limits_{\M}K_{\alpha}(T;x,x)d\vol(x),
\end{equation*}
and for $w\in\R^{n}$
\begin{equation}
\label{eq:Bnalp clean}
B_{n,\alpha}(w) = B_{n,\alpha}(|w|) = \frac{1}{|A_{\alpha}|} \int\limits_{A_{\alpha}}e^{2\pi i\langle w,\xi  \rangle} d\xi
\end{equation}
where $ A_{\alpha}=\left\{ w:\: \alpha\le |w|\le 1 \right\}$ and in the case $\alpha=1$ the $n$-dimensional measure $d\xi$ is replaced by $(n-1)$-dimensional surface measure on the unit sphere.

We may differentiate both sides of \eqref{eq:Kalpha asymp} to obtain asymptotic expression for {\em finitely} many derivatives
of $K_{\alpha}$.
By appropriately normalizing $f_{\alpha}$ we may assume w.l.o.g that $\widetilde{K_{\alpha}}$ is the covariance of $f_{\alpha}$
and we will neglect this difference from this point on.

Around each point $x\in \M$ we define
the scaled random fields $f_{x;T}$ (we drop $\alpha$ from notations) on a big ball (after scaling) lying on the Euclidian tangent space
$\R^{n}\cong T_{x}\M$ with the use of the exponential
map and an isometry $I_{x}:\R^{n}\rightarrow T_{x}\M$, $\Phi_{x}=\exp_{x}\circ I_{x}:\R^{n}\rightarrow \M$ via
\begin{equation}
\label{eq:fxL def}
f_{x;T}(u) := f_{T}(\Phi_{x}(u/T)),
\end{equation}
with the covariance function
\begin{equation*}
K_{x;T}(u,v) := \E[f_{x;T}(u)\cdot f_{x;T}(v)] = K_{T}(\Phi_{x}(u/T),\Phi_{x}(v/T)).
\end{equation*}

Observe that locally $\Phi_{x}$ is almost an isometry:
for each $\xi$ there exists $r_{0}=r_{0}(\xi)$ such that if $\mathcal{D}\subseteq B_{x}(r_{0})$
is a smooth domain in $\M$, then
\begin{equation}
\label{eq:|vol(D)-vol(Phix(d))|<eps}
|\vol_{T_{x}}(\mathcal{D}) -\vol_{\M}(\Phi_{x}(\mathcal{D}))|< \xi,
\end{equation}
uniformly w.r.t. $x\in \M$ ($r_{0}$ is assumed to be sufficiently small so that the exponential map is
$1-1$). By the scaling \eqref{eq:fxL def} we obviously have
\begin{equation*}
\nod\left(f,\frac{t}{T^{n}};x,\frac{R}{T}\right) \approx \nod\left(f_{x;T},t;x,R\right).
\end{equation*}
The precise meaning will be given by \eqref{eq:nod fxL<=nod fL<=nodfxL}. This means that studying $f_{\alpha;T}$ and $f_{x;T}$ is essentially equivalent. Finally, from \eqref{eq:Kalpha asymp} we see that the covariance kernel of $f_{x,T}$ converges to
$$
r_{n,\alpha}(u,v)=B_{n,\alpha}(|u-v|).
$$
This suggests to define the local scaling limit $\gfr_{n,\alpha}$ to be a Gaussian function in $\R^n$ with this covariance kernel. Alternatively, it could be defined by its spectral measure which by \eqref{eq:Bnalp clean} is the normalized Lebesgue measure on $A_\alpha$ (or the normalized surface area on $A_1$ for $\alpha=1$). It is important to point out that the scaling limit is universal: it does not depend on $x$ or $\mathcal{M}$, but
only on $n$ and $\alpha$.

Functions $f_{x,T}$ and $\gfr_{n,\alpha}$ are defined on different probability spaces, but it is possible to couple them in such a way that they are close for large $T$. The precise meaning is given by the following lemma.

\begin{lemma}[~\cite{So12}, Lemma $4$]
\label{lem:E[|fxL-Fx|<alpha]}
Given $R>0$ and $b>0$, there exists $T_{0}=T_{0}(R,b)$ such that for all $T\ge T_{0}$ we have
\begin{equation*}
\E\left[\|f_{x;T}-\gfr_{n,\alpha}\|_{C^{1}(\overline{B}(2R))}\right] < b.
\end{equation*}
\end{lemma}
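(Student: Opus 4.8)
The statement is a soft fact about Gaussian fields: two centred Gaussian fields whose covariance kernels are uniformly close, together with derivatives, can be realised on one probability space so as to be close in $C^{1}$-norm in expectation. The plan is to reduce the lemma to three ingredients: \emph{(a)} convergence of the covariance kernel of $f_{x;T}$ to that of $\gfr_{n,\alpha}$ in every $C^{m}$, uniformly in $x\in\M$; \emph{(b)} an a priori bound controlling $\E[\|G\|_{C^{1}(\overline{B}(2R))}]$ for a centred Gaussian field $G$ by finitely many derivatives of its covariance; and \emph{(c)} the construction of the coupling by optimal transport of the two Gaussian measures on a Sobolev space.

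For \emph{(a)}, I would show that for every $m\ge 0$, $\|K_{x;T}-r_{n,\alpha}\|_{C^{m}(\overline{B}(2R)\times\overline{B}(2R))}=O_{m,R}(T^{-1})$, uniformly in $x$. Indeed $K_{x;T}(u,v)=K_{T}(\Phi_{x}(u/T),\Phi_{x}(v/T))$, so by \eqref{eq:Kalpha asymp} (which may be differentiated finitely many times) this equals $B_{n,\alpha}(T\cdot d(\Phi_{x}(u/T),\Phi_{x}(v/T)))+O(T^{-1})$; since $d\Phi_{x}$ is a linear isometry at the origin, a Taylor expansion of the exponential map gives $T\cdot d(\Phi_{x}(u/T),\Phi_{x}(v/T))=|u-v|+O(T^{-2})$, uniformly on $\overline{B}(2R)$ and in $x$ (as $\M$ is compact), whence $K_{x;T}(u,v)=B_{n,\alpha}(|u-v|)+O(T^{-1})=r_{n,\alpha}(u,v)+O(T^{-1})$, and similarly for all derivatives. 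Only the qualitative conclusion $\|K_{x;T}-r_{n,\alpha}\|_{C^{m}}\to 0$ is needed below.

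For \emph{(b)}, fix an integer $m_{0}=m_{0}(n)>1+n/2$, so that $H^{m_{0}}(\overline{B}(2R))\hookrightarrow C^{1}(\overline{B}(2R))$; for a centred Gaussian field $G$ on $\overline{B}(2R)$ with $K_{G}\in C^{2m_{0}}$ one has, pathwise, $\|G\|_{C^{1}}\lesssim_{R,n}\|G\|_{H^{m_{0}}}$, while Gaussianity gives $\E[\|G\|_{H^{m_{0}}}^{2}]=\sum_{|\beta|\le m_{0}}\int_{\overline{B}(2R)}\partial_{u}^{\beta}\partial_{v}^{\beta}K_{G}(u,v)|_{u=v}\,du$; both $K_{x;T}$ and $r_{n,\alpha}$ being $C^{\infty}$, this applies. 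For \emph{(c)}, regard the law $\mu_{T}$ of $f_{x;T}|_{\overline{B}(2R)}$ and the law $\mu_{\infty}$ of $\gfr_{n,\alpha}|_{\overline{B}(2R)}$ as centred Gaussian measures on $H:=H^{m_{0}}(\overline{B}(2R))$, with (trace-class) covariance operators $S_{T},S_{\infty}$. By \emph{(a)} these measures converge weakly on $H$, and by \emph{(b)} their second moments converge, $\operatorname{tr}S_{T}=\E[\|f_{x;T}\|_{H}^{2}]\to\E[\|\gfr_{n,\alpha}\|_{H}^{2}]=\operatorname{tr}S_{\infty}$; hence $W_{2}(\mu_{T},\mu_{\infty})\to 0$ (equivalently, by Gelbrich's formula, $\operatorname{tr}(S_{T})+\operatorname{tr}(S_{\infty})-2\operatorname{tr}[(S_{\infty}^{1/2}S_{T}S_{\infty}^{1/2})^{1/2}]\to 0$). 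The $W_{2}$-optimal coupling of two Gaussian measures is jointly Gaussian, hence a legitimate coupling of the fields; realising $f_{x;T}$ and $\gfr_{n,\alpha}$ on one probability space through it, the Sobolev embedding and Cauchy--Schwarz give
\[
\E\bigl[\|f_{x;T}-\gfr_{n,\alpha}\|_{C^{1}(\overline{B}(2R))}\bigr]\lesssim_{R,n}\E\bigl[\|f_{x;T}-\gfr_{n,\alpha}\|_{H}^{2}\bigr]^{1/2}=W_{2}(\mu_{T},\mu_{\infty})\longrightarrow 0,
\]
all constants being uniform in $x$; choosing $T_{0}=T_{0}(R,b)$ so that the last quantity is $<b$ for $T\ge T_{0}$ proves the lemma.

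The one genuinely delicate step is \emph{(c)}: upgrading closeness of the two covariance \emph{structures} to a joint realisation that is close in the strong $C^{1}$ topology. The difficulty is that $K_{x;T}$ has finite rank --- it is a finite sum of products of Laplace eigenfunctions --- so one cannot simply write $f_{x;T}=\gfr_{n,\alpha}+(\text{small independent field})$; working in $H^{m_{0}}$ and using that the finite-rank operators $S_{T}$ still converge (in trace norm, or weakly with convergence of traces) to the full-rank $S_{\infty}$ circumvents this. An essentially equivalent, more hands-on route would couple the Gaussian vectors $(f_{x;T}(\lambda))_{\lambda\in\Lambda}$ and $(\gfr_{n,\alpha}(\lambda))_{\lambda\in\Lambda}$ over a fine finite net $\Lambda\subset\overline{B}(2R)$ by the $W_{2}$-optimal (jointly Gaussian) coupling and extend it off the net using the uniform-in-$T$ $C^{1}$-modulus-of-continuity bound furnished by \emph{(a)} via Kolmogorov--Chentsov; letting the net shrink gives the same conclusion.
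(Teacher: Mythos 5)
The paper does not prove this lemma; it cites it directly from Sodin's lecture notes \cite{So12}, Lemma~4, so your task was to supply a proof from scratch. Your approach --- uniform $C^{m}$-convergence of covariance kernels, the Sobolev embedding $H^{m_{0}}\hookrightarrow C^{1}$ to control the target norm, and a $W_{2}$-optimal coupling of the two Gaussian measures on $H^{m_{0}}(\overline{B}(2R))$ --- is sound and will produce the statement. It is, however, a more abstract route than the one in the cited source, which couples the fields by working directly with their spectral (Karhunen--Lo\`eve-type) representations and feeding the same i.i.d.\ Gaussians through nearby deterministic systems of functions. Your route buys a clean conceptual structure and makes the uniformity in $x$ essentially automatic; the price is some functional-analytic bookkeeping that you have glossed over.

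Two points should be made explicit. First, in step~(c) the assertion that ``by~(a) these measures converge weakly on $H$'' is not immediate: convergence of covariance kernels gives convergence of the finite-dimensional marginals of the Gaussian measures, but weak convergence on the infinite-dimensional space $H^{m_{0}}$ also needs tightness. This is easy --- the uniform bound $\sup_{T}\E\bigl[\|f_{x;T}\|_{H^{m_{0}+1}}^{2}\bigr]<\infty$ furnished by the $C^{2m_{0}+2}$-uniform control on the kernels, together with compactness of $H^{m_{0}+1}\hookrightarrow H^{m_{0}}$ and Markov's inequality, gives tightness --- but it must be said, since without it the appeal to the characterization ``$W_{2}\to0\iff$ weak convergence $+$ convergence of second moments'' is incomplete. (Alternatively, and perhaps more directly, one can upgrade~(a) to trace-norm convergence $\|S_{T}-S_{\infty}\|_{\mathrm{tr}}\to0$ by factoring through a Hilbert--Schmidt embedding $H^{m_{0}+m}\hookrightarrow H^{m_{0}}$, and then read off $W_{2}\to0$ from Gelbrich's formula.) Second, the remark that the $W_{2}$-optimal coupling of Gaussians is jointly Gaussian is true but unnecessary for the argument: any coupling realizing $\E[\|X-Y\|_{H}^{2}]=W_{2}^{2}(\mu_{T},\mu_{\infty})$ (which exists by compactness of the set of couplings) suffices, and the final line then follows from Sobolev embedding and Cauchy--Schwarz exactly as you wrote. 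With the tightness step filled in, the proof is correct.
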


\subsection{The Kac-Rice premise}
\label{subsec: Kac-Rice}

In this section we collect a number of {\em local} results required below. The Kac-Rice formula is a powerful
tool for computing moments of local quantities, and in principle it is capable of expressing {\em any} moment
of a local quantity of a given random Gaussian field $F$
in terms of explicit, albeit complicated, Gaussian expectations, or, equivalently, in terms of the covariance function
$$r(x,y):=\E[F(x)\cdot F(y)].$$ In this manuscript we will only need upper bounds for expectations
of some more or less tricky local quantities; typically we will fix the random field $F$ over expanding
balls $B(R)$ as $R\rightarrow\infty$ with one exception, where we will require uniform control
w.r.t. both the spectral parameter $T\rightarrow\infty$ for the band-limited functions on geodesic balls,
and their centres (see Lemma \ref{lem:Kac-Rice band lim bnd unif} below). The proofs of these results will be omitted;
instead we refer the reader to other sources where these are fully proved.

Let $m\le n$, $F:\Dc\rightarrow\R^{m}$ be a smooth random field
on a domain $\Dc\subseteq \R^{n}$, and $\Zc(F;\overline{\Dc})$ be either the $(n-m)$-volume of $F^{-1}(0)$
(for $m<n$), or the number of the discrete zeros (for $m=n$). For example, if $H:\M\rightarrow \R$ is a random field
and $F=\nabla H|_{\Dc}:\M\rightarrow \R^{n}$ is its gradient restricted to a coordinate patch, then $\Zc(F,\overline{\Dc})$
counts the number of critical points of $H$ on $\Dc$.
We set $J_{F}(x)$ to be the (random) Jacobi matrix of $F$ at $x$,
and define the ``zero density" of $F$ at $x\in \Dc$ as the conditional Gaussian expectation
\begin{equation}
\label{eq:K1 density def}
K_{1}(x) = K_{1;F}(x)=\E[|\det J_{F}(x)| \big| F(x)=0].
\end{equation}

With the above notation the Kac-Rice formula (meta-theorem) states that, under some non-degeneracy condition on $F$,
$$\E[\Zc(F;\Dc)]=\int\limits_{\Dc}K_{1}(x)dx.$$ Concerning the sufficient conditions that guarantee that \eqref{eq:K1 density def} holds,
a variety of results is known ~\cite{Adler-Taylor,AW}. The following version of Kac-Rice merely requires the non-degeneracy
of the values of $F$ (vs. the non-degeneracy of $(F,J_{F}(x))$ in the appropriate sense, as in
some more classical sources), to our best knowledge, the mildest sufficient condition.

\begin{lemma}[Standard Kac-Rice ~\cite{AW}, Theorem $6.3$]
\label{lem:Kac-Rice precise}
Let $F:\Dc\rightarrow\R^{m}$ be an a.s. smooth Gaussian field, such that for every $x\in \Dc$ the distribution of
the random vector $F(x)\in \R^{m}$ is non-degenerate Gaussian. Then
\begin{equation}
\label{eq:E[ZH]=int K1}
\E[\Zc(F;\Dc)]=\int\limits_{\Dc}K_{1}(x)dx
\end{equation}
with the zero density $K_{1}(x)$ as in \eqref{eq:K1 density def}.
\end{lemma}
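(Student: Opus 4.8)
The plan is to prove \eqref{eq:E[ZH]=int K1} by the classical ``approximate delta function'' method, turning the pointwise counting into a coarea computation and passing to the limit. First I would reduce to a convenient domain: both sides of \eqref{eq:E[ZH]=int K1} are monotone under enlarging $\Dc$ (the left-hand side because $\Zc(F;\cdot)$ is monotone in the domain, the right-hand side because $K_{1}\ge 0$), so by exhausting $\Dc$ with an increasing sequence of relatively compact open subsets and invoking monotone convergence on both sides it is enough to treat the case where $\overline{\Dc}$ is compact and, say, $\Dc$ is a ball. On such a $\Dc$, the Bulinskaya-type lemma --- which uses exactly the hypothesis that $F(x)\in\R^{m}$ is a non-degenerate Gaussian vector for each fixed $x$ --- shows that almost surely $0$ is a regular value of $F$ and $F$ has no zero on $\p\Dc$; hence a.s.\ $\Zc(F;\Dc)=\Zc(F;\overline{\Dc})$ and $F^{-1}(0)\cap\Dc$ is a $C^{1}$ submanifold of dimension $n-m$ (a finite point set when $m=n$).

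Next, for $\varepsilon>0$ set $\delta_{\varepsilon}=(2\varepsilon)^{-m}\mathbf{1}_{[-\varepsilon,\varepsilon]^{m}}$ and define the mollified counting functional
$$
\Zc_{\varepsilon}(F;\Dc)=\int_{\Dc}\delta_{\varepsilon}\big(F(x)\big)\,JF(x)\,dx,\qquad JF(x):=\sqrt{\det\big(J_{F}(x)J_{F}(x)^{\mathsf T}\big)},
$$
where $JF(x)=|\det J_{F}(x)|$ when $m=n$, matching the notation of \eqref{eq:K1 density def}. For a fixed realization the coarea formula gives
$$
\Zc_{\varepsilon}(F;\Dc)=\frac{1}{(2\varepsilon)^{m}}\int_{[-\varepsilon,\varepsilon]^{m}}\mathcal{H}^{n-m}\big(F^{-1}(y)\cap\Dc\big)\,dy,
$$
and on the almost sure event identified above the function $y\mapsto\mathcal{H}^{n-m}(F^{-1}(y)\cap\Dc)$ is continuous at $y=0$ (stability of transverse intersections / the implicit function theorem), so $\Zc_{\varepsilon}(F;\Dc)\to\Zc(F;\Dc)$ almost surely as $\varepsilon\to0$.

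For the expectations, since $F$ is Gaussian the pair $(F(x),J_{F}(x))$ is jointly Gaussian for each fixed $x$, so by Fubini and Gaussian regression (conditioning on $F(x)=y$ only shifts the mean of $J_{F}(x)$ affinely in $y$) one gets
$$
\E\big[\Zc_{\varepsilon}(F;\Dc)\big]=\int_{\Dc}\frac{1}{(2\varepsilon)^{m}}\int_{[-\varepsilon,\varepsilon]^{m}}\E\big[JF(x)\,\big|\,F(x)=y\big]\,p_{F(x)}(y)\,dy\,dx,
$$
where $p_{F(x)}$ is the density of $F(x)$. As $\varepsilon\to0$ the inner box-average tends to $\E[JF(x)\mid F(x)=0]\,p_{F(x)}(0)=:K_{1}(x)$ (cf.\ \eqref{eq:K1 density def}, with the Gaussian density factor $p_{F(x)}(0)$ included), and this convergence is uniform in $x\in\overline{\Dc}$; moreover $x\mapsto K_{1}(x)$ is continuous on the compact set $\overline{\Dc}$ --- again by Gaussian regression, the non-degeneracy hypothesis keeping $p_{F(x)}(0)$ bounded away from infinity --- hence bounded, so $\E[\Zc_{\varepsilon}(F;\Dc)]\to\int_{\Dc}K_{1}(x)\,dx<\infty$. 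Combined with the a.s.\ convergence of the previous paragraph, Fatou's lemma already yields $\E[\Zc(F;\Dc)]\le\int_{\Dc}K_{1}(x)\,dx$. For the reverse inequality one must interchange limit and expectation on the left, i.e.\ establish uniform integrability of the family $\{\Zc_{\varepsilon}(F;\Dc)\}_{\varepsilon>0}$; this is obtained by localising $\Dc$ into small balls on which, after a linear change of coordinates, $F$ is $C^{1}$-close to a non-degenerate affine map with probability close to one, so that on each ball $\Zc_{\varepsilon}$ is dominated, uniformly in $\varepsilon$, by an integrable random variable built from $\|F\|_{C^{1}}$ and the (Gaussian, hence all-moment-finite) entries of $J_{F}$; dominated convergence then applies ball by ball, and summing over the cover gives $\E[\Zc(F;\Dc)]\ge\int_{\Dc}K_{1}(x)\,dx$.

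The delicate point, and the one I expect to cost most of the work, is the uniform integrability in the last step under the weak standing hypothesis --- only $F(x)$, and not the pair $(F(x),J_{F}(x))$, is assumed non-degenerate. Here one must rule out \emph{quantitatively} a non-negligible contribution of zeros at which the Jacobian $J_{F}$ is nearly degenerate (``near-tangencies'') and of zeros accumulating on $\p\Dc$; the Bulinskaya lemma disposes of these events almost surely, but producing an integrable domination for $\Zc_{\varepsilon}$ as $\varepsilon\to0$ requires the extra localisation and Gaussian small-ball estimates sketched above. Everything else --- the coarea identity, Fubini, and the continuity in $y$ and $x$ of the Gaussian conditional expectations --- is routine bookkeeping.
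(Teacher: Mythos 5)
The paper does not actually prove this lemma: it is quoted as a black box from Aza\"is--Wschebor, Theorem~6.3, and the surrounding text states explicitly that ``the proofs of these results will be omitted; instead we refer the reader to other sources where these are fully proved.'' So there is no in-paper argument to compare against; the question is whether your sketch is a faithful reconstruction of the cited source, or a different route. It is a different route. Your proof is the classical Kac mollifier/coarea argument: regularize $\mathbf{1}_{\{F=0\}}$ by a box kernel, use the coarea formula to rewrite the mollified count as an average of level-set volumes, compute the expectation by Fubini and Gaussian regression, and pass to the limit. The Aza\"is--Wschebor proof of their Theorem~6.3 does something structurally different: it first establishes the Rice formula under the \emph{stronger} hypothesis that the pair $(F(x),J_{F}(x))$ is non-degenerate (their Theorem~6.2), and then, to handle the Gaussian case with only $F(x)$ non-degenerate, perturbs $F$ by an independent small Gaussian field $\sigma G$ so that the stronger hypothesis holds, applies the already-proved formula, and sends $\sigma\to0$ using Gaussian continuity. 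This regularization trick is precisely what dodges the uniform-integrability problem that you correctly single out as the crux; in your direct approach that step is genuinely delicate and you leave it as a sketch.

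Two further remarks. First, a small inaccuracy in your Bulinskaya step: for $m<n$ it is not true that $F^{-1}(0)$ a.s.\ avoids $\partial\Dc$; generically it meets $\partial\Dc$ transversally in an $(n-m-1)$-dimensional set. The correct a.s.\ statement is that $\mathcal{H}^{n-m}\bigl(F^{-1}(0)\cap\partial\Dc\bigr)=0$, which is what you actually need for continuity of $y\mapsto\mathcal{H}^{n-m}\bigl(F^{-1}(y)\cap\Dc\bigr)$ at $y=0$. Second, your proposal to obtain uniform integrability ``by localising into small balls on which $F$ is $C^{1}$-close to a non-degenerate affine map with probability close to one'' implicitly leans on a lower bound for $|\det J_{F}|$ near $F^{-1}(0)$, which under the weak hypothesis (no non-degeneracy assumed on $J_{F}(x)$) is exactly what is not available; this is why the cited proof avoids the issue by perturbing rather than dominating. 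So while your overall outline is a recognized path to Kac--Rice, the hard part is only gestured at and is not the argument the paper points to.
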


The following lemma is an
upper bound for either the number of critical points of a random field or its restriction
to a hypersphere as a result of an application of Kac-Rice on coordinate patches.

\begin{lemma}[~\cite{SW}, Corollary 2.3]
\label{lem:Kac Rice crit ball sphere}

Let $\Dc\subseteq\R^{m}$ be a domain and $F:\Dc\rightarrow\R$ an a.s. $C^{2}$-smooth stationary Gaussian random field,
such that for $x\in \Dc$ the distribution of $\nabla F(x)$ is non-degenerate Gaussian.

\begin{enumerate}

\item For $r>0$ let $\Ac(F;r)$ be the number of critical points of $F$ inside $B(r)\subseteq\Dc$. Then
$$\E[\Ac(F;r)] = O(\vol(B(r))),$$ where the constant involved in the `$O$'-notation depends on the law of $F$ only.

\item For $r>0$ let $\widetilde{\Ac}(F;r)$ be the number of critical points of
the restriction $F|_{\partial B(r)}$ of $F$ to the sphere $\partial B(r)\subseteq\Dc $. Then
$$\E[\widetilde{\Ac}(F;r)]= O(\vol(\partial B(r))),$$
where the constant involved in the `$O$'-notation depends on the law of $F$ only.

\end{enumerate}

\end{lemma}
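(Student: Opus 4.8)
The plan is to apply the standard Kac--Rice formula (Lemma \ref{lem:Kac-Rice precise}) to the gradient field in both parts.

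For part (1), put $F' := \nabla F \colon \Dc \to \R^{m}$, an a.s.\ $C^{1}$-smooth Gaussian field whose zero set inside $B(r)$ is exactly the set of critical points of $F$ there, so that $\Ac(F;r) = \Zc(F'; B(r))$. By hypothesis $F'(x) = \nabla F(x)$ is non-degenerate Gaussian for each $x$, so Lemma \ref{lem:Kac-Rice precise} gives $\E[\Ac(F;r)] = \int_{B(r)} K_{1;F'}(x)\, dx$ with $K_{1;F'}(x) = \E[\, |\det \operatorname{Hess} F(x)| \mid \nabla F(x) = 0 \,]$, where $\operatorname{Hess} F$ is the Jacobi matrix of $\nabla F$. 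Since $F$ is stationary, the joint law of $(\nabla F(x), \operatorname{Hess} F(x))$ does not depend on $x$; moreover, stationarity forces the third-order derivatives of the covariance function to vanish at the origin, so $\nabla F(x)$ and $\operatorname{Hess} F(x)$ are independent, and hence $K_{1;F'}(x) \equiv \E[\,|\det \operatorname{Hess} F(0)|\,] =: C_{F} < \infty$ (the determinant is a polynomial in the entries of the Gaussian matrix $\operatorname{Hess} F(0)$, hence integrable), a constant depending only on the law of $F$. Therefore $\E[\Ac(F;r)] = C_{F}\,\vol(B(r))$, as claimed.

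For part (2), the idea is to cover $\partial B(r)$ by $O(\vol(\partial B(r)))$ coordinate patches of fixed size and bounded overlap multiplicity, on each of which $F|_{\partial B(r)}$ is the pullback of the stationary field $F$ under a near-isometry, apply Kac--Rice to the pulled-back field, and add up. For $p \in \partial B(r)$ let $\pi_{p}$ be the orthogonal projection of a neighbourhood of $p$ in $\partial B(r)$ onto the affine tangent hyperplane at $p$, which we identify with $\R^{m-1}$. Because $\partial B(r)$ has principal curvatures $1/r \le 1$ for $r\ge 1$, $\pi_{p}$ is a diffeomorphism onto a ball $B(\rho_{0}) \subseteq \R^{m-1}$ of some fixed radius $\rho_{0} > 0$, with all derivatives of $\pi_{p}^{-1}$ up to second order bounded uniformly in $p$ and in $r \ge 1$. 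The pulled-back field $\widetilde{F}_{p} := F \circ \pi_{p}^{-1}$ on $B(\rho_{0})$ is Gaussian, its critical points are precisely the critical points of $F|_{\partial B(r)}$ in that patch, and, by stationarity of $F$, the covariance functions of $\widetilde{F}_{p}$ and of its first and second derivatives are controlled in terms of $\pi_{p}^{-1}$ and the covariance function of $F$ alone, with $\nabla \widetilde{F}_{p}$ non-degenerate. Applying Lemma \ref{lem:Kac-Rice precise} to $\nabla \widetilde{F}_{p}$ bounds the expected number of critical points of $F|_{\partial B(r)}$ in the patch by $O(1)$, uniformly in $p$ and $r$; summing over the $O(\vol(\partial B(r)))$ patches yields $\E[\widetilde{\Ac}(F;r)] = O(\vol(\partial B(r)))$.

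The main obstacle is the uniformity asserted in the previous paragraph: that the Kac--Rice density of $\nabla \widetilde{F}_{p}$ is bounded uniformly in the patch centre $p$ and the radius $r$. This reduces to a uniform lower bound on the smallest eigenvalue of the covariance matrix of $\nabla \widetilde{F}_{p}(u)$ together with a uniform upper bound on $\E[\, |\det \operatorname{Hess} \widetilde{F}_{p}(u)| \mid \nabla \widetilde{F}_{p}(u) = 0\,]$. Both follow from the observation that, as $r \to \infty$, $\pi_{p}^{-1}$ converges in $C^{2}$ to an isometric embedding of the flat disc $B(\rho_{0})$ into $\R^{m}$, so that $\widetilde{F}_{p}$ converges to the restriction of the stationary field $F$ to an $(m-1)$-plane, which is stationary and non-degenerate with explicitly controlled covariance; a continuity/compactness argument then gives the required uniform bounds for all $r \ge r_{0}$, while the remaining compact range $1 \le r \le r_{0}$ is handled directly. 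Alternatively, one can appeal to the Kac--Rice bound for Gaussian fields on Riemannian manifolds from \cite{Adler-Taylor}, applied to $F|_{\partial B(r)}$ with its induced metric, keeping track of the uniformly bounded dependence of the constants on the geometry of $\partial B(r)$.
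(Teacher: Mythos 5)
The paper does not prove this lemma; it cites \cite{SW}, Corollary 2.3, and simply describes the argument as ``an application of Kac--Rice on coordinate patches.'' Your proposal carries out exactly that strategy --- Kac--Rice applied to $\nabla F$ on $B(r)$ for part (1), and Kac--Rice applied to the pullbacks $\nabla\widetilde{F}_{p}$ on a covering of $\partial B(r)$ by patches of fixed size for part (2) --- so it matches the cited approach. Part (1) is complete and correct, including the clean observation that stationarity makes $\nabla F(0)\perp\operatorname{Hess}F(0)$ so the density is a constant. Part (2) is correct in outline; the one place where you wave your hands is the uniform (in $p$ and $r$) control of the Kac--Rice density, which, as you note, amounts to a uniform lower bound on the covariance of the tangential gradient (automatic, since it is the projection of the fixed non-degenerate covariance of $\nabla F(0)$ onto an $(m-1)$-plane) and a uniform bound on $\E\bigl[\lvert\det\operatorname{Hess}(F|_{\partial B(r)})\rvert\mid\nabla_{\tan}F=0\bigr]$; for the latter it is worth recording explicitly that the intrinsic Hessian is $(\operatorname{Hess}F)_{ij}-\tfrac{1}{r}\delta_{ij}\langle\nabla F,\nu\rangle$, whose entries are Gaussians with uniformly bounded covariances once $r\ge 1$, so the absolute determinant has uniformly bounded expectation. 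With that remark inserted, the proof is complete.
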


The following estimate is the upper bound part of the (precise) Kac-Rice formula applied
to the band limited functions.

\begin{lemma}[~\cite{So12} Lemma $2$, ~\cite{SW} Lemma $7.8$]
\label{lem:Kac-Rice band lim bnd unif}
For $x\in\M$, $r>0$ let $\nod_{\Omega}(f_{\alpha;T};x,r)$ be the number of nodal domains of $f_{\alpha;T}$
entirely lying in $B(x,r)\subseteq\M$: the geodesic ball of radius $r$  centred at $x$.
Then
\begin{equation*}
\E[\nod_{\Omega}(f_{\alpha;T};x,r)]= O(r^{n}\cdot T^{n}),
\end{equation*}
with constant involved in the $`O'$-notation depending on $\M$ and $\alpha$ only.
\end{lemma}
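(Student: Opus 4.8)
The plan is to dominate the number of nodal domains contained in $B(x,r)$ by the number of critical points of $f_{\alpha;T}$ in that ball, and then to bound the latter via the Kac--Rice formula after passing to the rescaled field $f_{x;T}$ of \eqref{eq:fxL def}. First I would record the elementary observation that if $\omega$ is a nodal domain of $f=f_{\alpha;T}$ with $\omega\subseteq B(x,r)$, then, $\omega$ being a connected component of the open set $\{f\ne 0\}$, its boundary is contained in $f^{-1}(0)$, while $f$ has a fixed sign and is not identically zero on $\omega$. Hence $f$ attains an interior extremum over $\overline{\omega}$, at a point $p\in\omega$ with $\nabla f(p)=0$. Since distinct nodal domains are disjoint, this yields
\begin{equation*}
\nod_{\Omega}(f_{\alpha;T};x,r)\ \le\ \#\{\text{critical points of }f_{\alpha;T}\text{ in }B(x,r)\}.
\end{equation*}

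Next I would localise. Fixing $r_{0}>0$ depending on $\M$ only, so that each $\Phi_{x}$ of \eqref{eq:fxL def} is a diffeomorphism on $B(0,2r_{0})$ with metric distortion at most $2$ (using \eqref{eq:|vol(D)-vol(Phix(d))|<eps} and its first-order analogue), I reduce the case of general $r$ to the case $r\le r_{0}$ by covering $B(x,r)$ with $O(r^{n}/r_{0}^{n})=O(r^{n})$ geodesic balls of radius $r_{0}$, the implied constant depending on $\M$ only. For $r\le r_{0}$, critical points of $f_{\alpha;T}$ in $B(x,r)$ correspond, under $\Phi_{x}$ and the scaling $u\mapsto u/T$, to zeros of $\nabla f_{x;T}$ inside the Euclidean ball $B(0,2rT)\subseteq\R^{n}$. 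I would then apply the Kac--Rice formula, Lemma \ref{lem:Kac-Rice precise}, to $\nabla f_{x;T}\colon B(0,2rT)\to\R^{n}$, obtaining
\begin{equation*}
\E\big[\#\{\text{zeros of }\nabla f_{x;T}\text{ in }B(0,2rT)\}\big]=\int_{B(0,2rT)}K_{1}^{(x;T)}(u)\,du,
\end{equation*}
with $K_{1}^{(x;T)}(u)=\E\big[|\det\operatorname{Hess}f_{x;T}(u)|\,\big|\,\nabla f_{x;T}(u)=0\big]\cdot p_{\nabla f_{x;T}(u)}(0)$, and conclude by showing $K_{1}^{(x;T)}(u)\le C(n,\alpha)$ uniformly, so that the integral is $O((rT)^{n})=O(r^{n}T^{n})$; note that the entire power $T^{n}$ comes from the volume of the rescaled ball, the density being of size $O(1)$.

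The uniform bound on $K_{1}^{(x;T)}$ is the heart of the argument and I expect it to be the main obstacle. It follows from the semiclassical asymptotics \eqref{eq:Kalpha asymp}: differentiating that relation (legitimate for finitely many derivatives) shows that the covariances of $f_{x;T}$ and of its first and second partial derivatives converge, uniformly in $x\in\M$ and in pairs of points at bounded Euclidean distance, with error $O(T^{-1})$, to the corresponding stationary covariances of $\gfr_{n,\alpha}$. In particular $\nabla f_{x;T}(u)$ is non-degenerate Gaussian for all $x$ and all $T$ beyond some $T_{0}$, which legitimises the use of Lemma \ref{lem:Kac-Rice precise}, and the density $K_{1}^{(x;T)}(u)$ is a fixed continuous function of the finitely many covariance entries involved. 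For the stationary limit $\gfr_{n,\alpha}$ the analogous quantity is a finite constant independent of the base point (this is precisely the input behind Lemma \ref{lem:Kac Rice crit ball sphere}(1)); hence by the uniform convergence above $K_{1}^{(x;T)}(u)$ is bounded by a constant depending only on $n$ and $\alpha$, uniformly in $u\in B(0,2rT)$, in $x\in\M$, and in $T\ge T_{0}$. Equivalently, one observes that the proof of Lemma \ref{lem:Kac Rice crit ball sphere}(1) applies with constants uniform over the family $\{f_{x;T}\}_{x,T}$, the uniformity again coming from \eqref{eq:Kalpha asymp}; the finitely many values $T<T_{0}$ are irrelevant since the assertion is asymptotic in $T$. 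Combining this with the reduction above gives $\E[\nod_{\Omega}(f_{\alpha;T};x,r)]=O(r^{n}T^{n})$ with the constant depending on $\M$ and $\alpha$ only.
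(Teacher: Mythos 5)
The paper itself does not prove this lemma: Section~2.2 explicitly says ``the proofs of these results will be omitted'' and refers to \cite{So12} Lemma~2 and \cite{SW} Lemma~7.8. Your argument is a correct reconstruction of the standard proof one finds in those sources, and the chain of reductions you lay out is sound. Dominating nodal domains lying in $B(x,r)$ by interior extrema of $f$ (hence critical points) is the right elementary step; the localisation to geodesic balls of fixed radius $r_0$ and the covering by $O(r^n)$ such balls for $r>r_0$ is routine on a compact manifold with uniformly bounded-below injectivity radius; and applying Lemma~\ref{lem:Kac-Rice precise} to $\nabla f_{x;T}$ on $B(0,2rT)$ converts the problem into a bound on the Kac--Rice density. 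The crucial uniformity in $x\in\M$ and $T$ is, as you say, exactly what the uniform $O(T^{-1})$ error in \eqref{eq:Kalpha asymp} (together with its finitely many differentiated versions) provides: it forces the joint Gaussian law of $(\nabla f_{x;T}(u),\operatorname{Hess}f_{x;T}(u))$ to converge, uniformly in $x$ and $u$, to the corresponding stationary law for $\gfr_{n,\alpha}$, whose gradient is non-degenerate by $(\rho 3)$. That both legitimises the Kac--Rice formula and caps the density by a constant depending on $n,\alpha$ only, yielding the $O(r^nT^n)$ bound with the volume of the rescaled ball supplying the whole power of $T$. I would only flag, as a cosmetic point, that the lemma is stated without an explicit $T\to\infty$ qualifier, so the dismissal of $T<T_0$ deserves a sentence (e.g.\ that for each fixed $T$ the expectation is finite by the same Kac--Rice computation, and the bound is only claimed with a constant that may absorb finitely many exceptional $T$); but this is a presentational matter, not a gap in the argument.
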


\section{Distribution of nodal domain areas for Euclidian fields}

\label{sec:scal invar}

\subsection{Notation and statement of the main result on Euclidian fields}

\subsubsection{Notation and basic setup}

\label{sec:not bas set}

Let $f$ be a smooth function, $t>0$, and $R>0$.
We denote $\nod(f,t;R)$ to be
the number of domains $\omega\in \Omega(f)$ of $f$ lying entirely in $B(R)$ of volume
$$
\vol_{n}(\omega)\le t;
$$
note that
$$
\nod(f;R)=\nod(f,\infty;R)
$$
is the total number of nodal domains lying inside $B(R)$, as considered by Nazarov and Sodin \cite{NaSo09}.

We are interested in the asymptotic distribution\footnote{To make sense of the distribution of $\nod(F,\cdot;\cdot)$ it is essential to show that the latter is a {\em random variable},
i.e. a measurable function on the sample space. Fortunately, the proof of a similar statement, given in
~\cite[Appendix A]{SW}, is sufficiently robust to cover our case and all the other similar quantities of this manuscript;
from this point on we will neglect any issue of measurability.} of the nodal domain volumes,
that is, the asymptotic behaviour of $\nod(F,t;R)$ as $R\rightarrow\infty$, $t>0$ fixed.
We would like to establish the limit
\begin{equation}
\label{eq:Psi(t)=lim(nod(t))/nod}
\Psi_{\rho}(t) := \lim\limits_{R\rightarrow\infty}\frac{\nod(F,t;R)}{\nod(F;R)}
\end{equation}
in mean, and, moreover, that $\Psi_{\rho}(t)$ is a distribution function, i.e.
$$
\lim\limits_{t\rightarrow\infty}\Psi_{\rho}(t)=1.
$$
The latter will follow once having established
the former (in the proper sense) via the obvious {\em deterministic} upper
bound
$$
\nod(F;R)- \nod(F,t;R) \lesssim \frac{R^{n}}{t}
$$
for the number of domains of volume greater than $t$.

Following Nazarov and Sodin \cite{NaSo15} we assume that the spectral measure $d\rho$ of $F$ satisfies the following axioms:



\begin{enumerate}

\item [$(\rho 1)$] The measure $d\rho$ has no atoms (if and only if the action of the translations
is ergodic by Grenander-Fomin-Maruyama, see ~\cite[Theorem $3$]{So12}).

\item [$(\rho 2)$] For some $p>4$,
$$
\int\limits_{\R^{n}}|\lambda|^{p}d\rho(\lambda) < \infty
$$
(this ensures the a.s. smoothness of $F$).

\item [$(\rho 3)$] The spectral support $\supp \rho$ does not lie in a linear hyperplane. (This ensures that
the random Gaussian field, together with its gradient is not degenerate.)

\end{enumerate}

For this model Nazarov-Sodin ~\cite{NaSo15} proved that there exists a constant $c(\rho)\ge 0$ (the ``Nazarov-Sodin constant")
so that
\begin{equation}
\label{eq:tot numb asymp NS}
\frac{\nod(F;R)}{\vol B(R)} \rightarrow c(\rho)
\end{equation}
both in mean and a.s.

It is shown in \cite{NaSo15} that under the additional mild condition  the constant $c(\rho)$ is strictly positive. We don't want to discuss these
technicalities, so instead we will use the assumption

\begin{enumerate}
\item [$(\rho 4)$]
The Nazarov-Sodin constant $c(\rho)$ is positive.
\end{enumerate}

Sometimes we will need a stronger  axiom:

\begin{enumerate}
\item [$(\rho 4^{*})$]
The support of the spectral measure $\rho$ has  non-empty interior.
\end{enumerate}

\subsubsection{Existence of limiting distribution $\Psi_{\rho}$}

\begin{theorem}
\label{thm:scal invar exist}
Let $F:\R^{n}\rightarrow\R$ be an a.s. smooth random field whose spectral density $\rho$
satisfies the axioms $(\rho 1)-(\rho 3)$, then
$$
\frac{\nod(F,t;R)}{ \vol(B(R))}
$$
converges in mean as $R\to \infty$. If we additionally assume the axiom $(\rho 4)$, then the limit
\begin{equation*}
\lim\limits_{R\rightarrow\infty}\frac{\nod(F;R)}{ \vol(B(R))} = c(\rho)>0
\end{equation*}
does not vanish, so that we can define the normalized limit
\begin{equation}
\label{eq:bi/volB(R)->eta}
\Psi_{\rho}(t):=\lim\limits_{R\rightarrow\infty}\frac{\nod(F,t;R)}{c(\rho)\cdot \vol(B(R))}.
\end{equation}

\end{theorem}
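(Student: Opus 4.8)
The plan is to establish convergence in mean of $\nod(F,t;R)/\vol(B(R))$ by an ergodic-theoretic argument, following the scheme of Nazarov--Sodin for the total count \eqref{eq:tot numb asymp NS}, with the refinement that we track volume. First I would fix $t>0$ and, for a parameter $\rho>0$ (an auxiliary radius, not the spectral measure), decompose $B(R)$ into a disjoint union of roughly $(R/\rho)^n$ translated cubes $Q_j$ of side $\rho$. For each cube introduce the local count $N_j = \nod(F,t;Q_j^{\mathrm{loc}})$ of nodal domains of $F$ of volume at most $t$ lying entirely inside a slightly shrunk cube $Q_j$. By stationarity of $F$ the random variables $\{N_j\}$ form a stationary (spatial) process indexed by the lattice $\rho\Z^n$, and axiom $(\rho 1)$ guarantees that the $\Z^n$-action by translations is ergodic. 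Applying the multiparameter pointwise ergodic theorem to this process gives
$$
\frac{1}{\#\{j\}}\sum_j N_j \longrightarrow \E[N_0] = \E[\nod(F,t;Q_0^{\mathrm{loc}})]
$$
almost surely and, after checking an $L^1$-domination, also in mean.

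The second ingredient is a sandwiching argument relating $\nod(F,t;R)$ to $\sum_j N_j$. On one side, every nodal domain of volume $\le t$ that is fully contained in some cube is counted in $\sum_j N_j$, but a domain straddling a face of the cube grid is missed; the number of such ``bad'' domains is controlled by counting domains meeting the grid's $(n-1)$-skeleton, which by the Kac--Rice upper bound of Lemma \ref{lem:Kac-Rice band lim bnd unif} (or rather its Euclidean analogue via Lemma \ref{lem:Kac Rice crit ball sphere}) has expectation $O((R/\rho)^n \cdot \rho^{n-1}) = O(R^n/\rho)$, which is $o(R^n)$ once $R\to\infty$ and then $\rho\to\infty$. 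On the other side $\sum_j N_j \le \nod(F,t;R+\rho)$ trivially. Combining, $\nod(F,t;R)/\vol(B(R))$ has the same limit in mean as $\E[\nod(F,t;Q_0)]/\rho^n$ up to an error that vanishes as $\rho\to\infty$; a standard subadditivity/monotonicity argument in $\rho$ (the map $\rho\mapsto \E[\nod(F,t;Q_\rho)]/\rho^n$ is, up to the boundary error, monotone and bounded by $(\rho 2)$--$(\rho 3)$ and the Kac--Rice bound $\E[\nod(F;R)] = O(R^n)$) shows this quantity itself converges as $\rho\to\infty$, and identifies the limit; call it $\nu_\rho(t) := \lim_{\rho\to\infty}\E[\nod(F,t;Q_\rho)]/\rho^n$. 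This yields convergence in mean of $\nod(F,t;R)/\vol(B(R))$ to $\nu_\rho(t)/v_n$ (with $v_n=\vol(B(1))$), which is the first assertion.

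For the second assertion, one applies the already-established convergence \eqref{eq:tot numb asymp NS} with limit $c(\rho)>0$ under $(\rho 4)$, so that the ratio $\nod(F,t;R)/\nod(F;R)$ converges in mean to $\Psi_\rho(t) := \nu_\rho(t)/(v_n c(\rho))$; convergence in mean of the ratio follows from convergence in mean of numerator and denominator together with the deterministic lower bound $\nod(F;R) \gtrsim R^n$ that one gets on the relevant event (or, more carefully, by writing the ratio's deviation in terms of the two separate deviations and using the uniform integrability provided by the Kac--Rice $L^1$-bounds). The passage to a genuine distribution function, $\lim_{t\to\infty}\Psi_\rho(t)=1$, is handled exactly as indicated in the excerpt: the deterministic bound $\nod(F;R)-\nod(F,t;R)\lesssim R^n/t$ forces $\nu_\rho(t) \to v_n c(\rho)$ as $t\to\infty$.

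The main obstacle I anticipate is not the ergodic theorem itself but the bookkeeping of the boundary/straddling domains: one must show that nodal domains which are neither small nor captured by a single cube contribute negligibly, uniformly enough to interchange the $R\to\infty$ and $\rho\to\infty$ limits. This is precisely where the Kac--Rice upper bounds of Section \ref{subsec: Kac-Rice} are essential — both to bound the number of domains meeting the cube skeleton and to provide the $L^1$-domination needed to upgrade a.s. convergence to convergence in mean. A secondary technical point is ensuring the limiting quantity $\nu_\rho(t)$ is well-defined independently of the shape of the exhausting family (cubes vs. balls), which again reduces to the same boundary estimate.
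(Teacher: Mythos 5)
Your proof is conceptually the same as the paper's — a stationarity/ergodicity argument for the translation action, combined with a Kac--Rice upper bound that controls the nodal domains straddling the boundaries of the local patches — but the implementation differs. You discretize $B(R)$ into a lattice of $\rho$-cubes and invoke the multiparameter pointwise ergodic theorem for the $\Z^n$-subaction, whereas the paper uses Sodin's \emph{integral-geometric sandwich} (Lemma \ref{lem:int geom sandwitch euclidian}): integrate the local count $\nod(F,t;u,r)$ over \emph{all} continuous translates $u$, and then apply Wiener's ergodic theorem for the full $\R^n$-action to the functional $F\mapsto \nod(F,t;r)/\vol B(r)$. The paper's route is marginally cleaner in two respects: $(\rho 1)$ gives ergodicity of the $\R^n$-action directly via Grenander--Fomin--Maruyama (you would need to extract ergodicity of the $\Z^n$-subaction, which is true but an extra step), and the boundary-control estimate in the paper, Lemma \ref{lem:Kac Rice crit ball sphere} part (2), is stated exactly for restrictions to spheres $\partial B_u(r)$ rather than to cube faces. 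Your ``subadditivity/monotonicity in $\rho$'' step to obtain the $\rho\to\infty$ limit is replaced in the paper by observing that after dropping the boundary term $\mathcal{C}(\tau_u F,t;r)$, both sides of the sandwich converge (in $L^1$) to the same constant $c_{r;t}$, and the discarded term is $O(1/r)$ in $L^1$, so $c_{r;t}$ is Cauchy in $r$; both ways of producing the $r\to\infty$ (or $\rho\to\infty$) limit are legitimate.

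One small slip worth flagging: there is no ``deterministic lower bound $\nod(F;R)\gtrsim R^n$'' — a single realization of $F$ can have few or even no nodal domains inside $B(R)$. This does not affect the theorem as stated, since once $\nod(F,t;R)/\vol B(R)$ converges in mean and $(\rho 4)$ ensures $c(\rho)>0$, the definition of $\Psi_\rho(t)$ is merely division by the positive constant $c(\rho)$. The passage to the random ratio $\nod(F,t;R)/\nod(F;R)$, which is only a remark following the theorem (and is made precise later, in the proof of Theorem \ref{thm:limit exist band lim}), is handled in the paper by the trivial \emph{upper} bound $\nod(F,t;R)/\nod(F;R)\le 1$, not a lower bound on the denominator.
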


Since the total number of nodal domains of $F$ lying inside $B(R)$ was proven to be asymptotic to
\begin{equation*}
\nod(F;R)\sim c(\rho)\cdot \vol B(R),
\end{equation*}
(see \eqref{eq:tot numb asymp NS}), \eqref{eq:bi/volB(R)->eta} may be equivalently read as
\begin{equation*}
\frac{\nod(F,t;R)}{\nod(F;R)}\rightarrow  \Psi_{\rho}(t),
\end{equation*}
(cf. \eqref{eq:Psi(t)=lim(nod(t))/nod}); this limit could be proven in mean, see the proof of Theorem \ref{thm:limit exist band lim}
in section \ref{sec:thm lim exist proof}.
Theorem \ref{thm:scal invar exist} in particular implies that for every $t>0$ the expected number
$\nod(F,t;R)$ of nodal domains of $F$ of volume at most $t$ lying in $B(R)$ is
\begin{equation}
\label{eq:E[bi(R)] sim Vol(B(R))}
\E[\nod(F,t;R)]= c(\rho)\cdot\Psi_{\rho}(t)\cdot\vol(B(R))(1+o_{R\rightarrow\infty}(1)),
\end{equation}
with concentration: for every $\epsilon>0$
\begin{equation}
\label{eq:concentration around mean}
\lim\limits_{R\rightarrow\infty} \prob \left\{ \left| \frac{\nod(F,t;R)}{\vol B(R)}  - c(\rho)\cdot \Psi_{\rho}(t)\right| > \epsilon  \right\}
=0,
\end{equation}
via Chebyshev's inequality.

\subsubsection{Some properties of $\Psi_{\rho}(t)$}

\begin{theorem}[Lower bound on domains in the generic case]
\label{thm:scal invar increase gen}

Assume that the spectral measure of $F$ satisfies axioms
$(\rho 1)-(\rho 3)$ and $(\rho 4^{*})$.
Then $\Psi_{\rho}(0)= 0$, and $\Psi_{\rho}(\cdot)$ is strictly increasing on $[0,\infty)$;
in particular for every $t>0$ we have $\Psi_{\rho}(t)> 0$.

\end{theorem}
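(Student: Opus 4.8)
The plan is to show that, for each prescribed target volume $t>0$, there is a positive probability (in fact positive density by translation invariance) of finding a nodal domain of $F$ whose volume lies in a small window around $t$; combined with Theorem \ref{thm:scal invar exist}, this forces $\Psi_\rho$ to be strictly increasing. The engine behind this is a deterministic construction plus a perturbation stability argument, following the strategy described in the introduction. First I would fix $t>0$ and exhibit a \emph{deterministic} smooth function $g$ on $\R^n$ whose nodal domain containing the origin is precompact and has volume exactly $t$ (or arbitrarily close to $t$); a convenient choice is a suitably scaled radial bump, e.g.\ a function built to vanish on a sphere of the appropriate radius and be positive inside. Next I would invoke a stability lemma for nodal volumes under $C^1$-perturbations (this is Lemma \ref{lem:vol pert}, which the excerpt promises): if $h$ is $C^1$-close to $g$ on a large enough ball $\overline{B(R)}$, then $h$ has a nodal domain, contained in that ball, whose volume is within $\varepsilon$ of $\vol(g$'s domain$)=t$. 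For this to be applicable one wants $g$ to have a nonvanishing gradient on its nodal set near the origin, i.e.\ $0$ is a regular value of $g$ there; the radial bump can be arranged to have this property.

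The second ingredient is that the random field $F$ approximates the deterministic $g$ with positive probability in the relevant topology. Here axiom $(\rho 4^{*})$ — the spectral support having nonempty interior — is essential: it guarantees that the reproducing kernel Hilbert space of $F$ is rich enough (it contains, or $C^1$-approximates on compacts, a large class of functions, in particular our $g$ restricted to $\overline{B(R)}$), and that the Gaussian measure assigns positive mass to any $C^1$-neighbourhood of $g$ on $\overline{B(R)}$. This is a standard consequence of the support theorem for Gaussian measures together with nondegeneracy; I would cite or adapt the corresponding statement from \cite{NaSo15} or \cite{So12}. Thus
$$
\prob\left\{ \|F - g\|_{C^1(\overline{B(R)})} < \delta \right\} =: p > 0
$$
for suitable $\delta$, and on this event $F$ has a nodal domain inside $B(R)$ of volume in $(t-\varepsilon, t+\varepsilon)$.

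Now I would upgrade "positive probability near the origin" to "positive density". By stationarity of $F$, the same event occurs with the same probability $p$ around every point of a lattice $(3R)\Z^n$ (say), and these translated events are identically distributed; moreover distinct lattice translates produce \emph{distinct} nodal domains since they lie in disjoint balls. A second-moment / ergodic-theorem argument — exactly of the type already used to prove Theorem \ref{thm:scal invar exist} — then gives that the number of nodal domains of $F$ inside $B(R')$ with volume in $(t-\varepsilon, t+\varepsilon)$ is $\gtrsim p \cdot \vol(B(R'))$ with probability tending to $1$. Dividing by $\nod(F;R') \sim c(\rho)\vol(B(R'))$ and passing to the limit yields
$$
\Psi_\rho(t+\varepsilon) - \Psi_\rho(t-\varepsilon) \;\geq\; \frac{p}{c(\rho)} \;>\; 0,
$$
and since $t>0$ and $\varepsilon>0$ were arbitrary (with $\varepsilon$ small), $\Psi_\rho$ is strictly increasing on $[0,\infty)$; taking $t-\varepsilon \to 0$ also handles strict positivity for every $t>0$. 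The statement $\Psi_\rho(0)=0$ is immediate from the definition, since a nodal domain has positive volume, so $\nod(F,0;R)=0$.

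The main obstacle I anticipate is the deterministic construction combined with the quantitative perturbation control: one must produce $g$ whose origin-nodal-domain has \emph{precisely} the target volume $t$ with $0$ a regular value along its boundary, and then make Lemma \ref{lem:vol pert} yield a genuine nodal domain of the perturbed function (not merely a nodal component, and genuinely compactly contained in $B(R)$, with controlled volume) — this requires care that the perturbation cannot "leak" the domain out to the boundary of $B(R)$ or merge it with neighbouring sign regions. The translation-to-density step and the Gaussian support argument are, by contrast, routine adaptations of \cite{NaSo15}.
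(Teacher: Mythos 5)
Your proposal matches the paper's proof in all essential respects: a deterministic function with a nodal domain of the prescribed volume (made possible by the density of the Cameron--Martin space in $C^k$ on compacts, which is what $(\rho 4^{*})$ buys), the $C^1$-stability of nodal volumes from Lemma \ref{lem:vol pert}, positivity of the probability of approximating the deterministic function, and translation invariance to upgrade to positive density and hence $\Psi_\rho(t+\epsilon)-\Psi_\rho(t-\epsilon)>0$. The only cosmetic difference is that you invoke the Gaussian support theorem abstractly, whereas the paper realizes the same fact concretely by normalizing the approximant $f_t\in\mathcal H$, completing to an orthonormal basis, writing $F=c_0 f_t+\phi$ with $c_0$ a standard Gaussian independent of $\phi$, and bounding $\phi$ in $C^1$ with positive probability; these are two phrasings of the same mechanism.
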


For the random plane wave the situation only slightly differs from the above in that $\Psi_{RWM}$ vanishes up to a certain explicit threshold.

\begin{theorem}
\label{thm:scal invar increase RWM}

Let $F=F_{RPW}$ be the random plane wave in $\R^n$. As its spectral measure satisfies axioms $(\rho 1)-(\rho 4)$,
the function $\Psi_{RPW}=\Psi_{\rho}$ defined as in Theorem \ref{thm:scal invar exist} exists.
Define
$$
t_{0}=t_{0}(n)=\frac{\pi^{n/2}}{\Gamma(n/2+1)}j^n_{n/2-1,1}
$$
as in Theorem \ref{thm:alpha=1 Psi increas t>t0}. Then the following holds:

\begin{enumerate}

\item
\label{it:area(omega)>t0}

For every nodal domain $\omega$ of $F_{RPW}$ we have
$ \vol(\omega)\ge t_0,$
and hence, in particular,  we have
$$
\Psi_{RPW}(t) =0, \qquad  t<t_{0}
$$

\item
Moreover, for $n=2$ the function $\Psi$ is strictly increasing on $[t_0,\infty)$


\end{enumerate}

\end{theorem}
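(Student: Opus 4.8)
For part \eqref{it:area(omega)>t0}, the plan is to exploit the fact that $F_{RPW}$ is a (random) eigenfunction of the Laplacian with eigenvalue $1$, i.e. $\Delta F_{RPW} + F_{RPW} = 0$, since its spectral measure is supported on the unit sphere $S^{n-1}$. Indeed, the spectral representation immediately gives $(\Delta + 1)F_{RPW} = 0$ in the $L^2$ (and, by elliptic regularity, classical) sense. Hence every nodal domain $\omega$ of $F_{RPW}$ is a domain on which $F_{RPW}$ vanishes on $\partial\omega$ and solves $\Delta u = -u$ inside, so $\lambda_1(\omega)$, the first Dirichlet eigenvalue of $\omega$, equals $1$. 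The Faber--Krahn inequality then forces $\vol(\omega) \ge \vol(B_*)$, where $B_*$ is the Euclidean ball with $\lambda_1(B_*) = 1$. Since $\lambda_1$ of a ball of radius $r$ in $\R^n$ equals $(j_{n/2-1,1}/r)^2$, we get $\lambda_1(B_*) = 1$ exactly when $r = j_{n/2-1,1}$, whose volume is precisely $t_0(n) = \frac{\pi^{n/2}}{\Gamma(n/2+1)} j_{n/2-1,1}^n$. This is a deterministic statement, valid for every realization, so $\nod(F_{RPW}, t; R) = 0$ for $t < t_0$ and consequently $\Psi_{RPW}(t) = 0$ for $t < t_0$.

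For part (2), the strategy parallels the one outlined in the paper for Theorems \ref{thm:scal invar increase gen} and \ref{thm:scal invar increase RWM}: first exhibit, for each $t \ge t_0$, a \emph{deterministic} smooth function $g_t$ whose nodal domain containing the origin has volume close to $t$; then invoke the stability Lemma \ref{lem:vol pert} to conclude that any $C^1$-perturbation of $g_t$ also has such a domain; and finally argue that $F_{RPW}$ is within $C^1$-distance of $g_t$ on a fixed ball with positive probability, whence by translation invariance and the ergodic/second-moment machinery underlying Theorem \ref{thm:scal invar exist} (cf. \eqref{eq:concentration around mean}) such domains occur with positive density, giving $\Psi_{RPW}(t) > \Psi_{RPW}(t')$ for $t' < t$. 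The construction of $g_t$ is where the restriction to $n = 2$ enters: one takes, for $t$ slightly above $t_0$, a small radial perturbation of the function $J_0(|x|)$ (whose nodal domain about the origin is the disc of radius $j_{0,1}$, with area exactly $t_0$) to slightly enlarge the central domain; for larger $t$ one builds $g_t$ by a suitable elongation/deformation of this picture, using in an essential way that explicit nodal configurations in the plane are controllable. One must check that $g_t$ can be taken to lie (after truncation) in the support of the RPW model, i.e. is well approximated in $C^1$ on a large ball by genuine solutions of the Helmholtz equation — this uses the expansion \eqref{eg: def RPW} in Bessel modes and the density of such solutions.

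The main obstacle is part (2): producing a one-parameter family $\{g_t\}_{t \ge t_0}$ of plane-wave-type functions realizing \emph{every} target volume $t$ and doing so in a way that survives $C^1$-perturbation. The difficulty is twofold: controlling the volume of the nodal domain of the origin as a continuous, surjective function of the parameter while keeping the function a Helmholtz solution (so that the RPW actually has positive probability of being near it), and handling the boundary behaviour so that Lemma \ref{lem:vol pert} applies (one needs the nodal set of $g_t$ to be nondegenerate near $\partial\omega$, i.e. $\nabla g_t \ne 0$ on the relevant part of the zero set). The threshold case $t = t_0$ is delicate because the extremal configuration $J_0(|x|)$ is rotationally symmetric and rigid, so strict monotonicity \emph{at} $t_0$ requires showing the central area can be pushed strictly above $t_0$ by an arbitrarily small admissible perturbation; the cleanest route is to perturb the radial profile so that the central domain becomes a slightly non-circular region of area $t_0 + \epsilon$, verify this remains realizable and stable, and let $\epsilon \to 0$.
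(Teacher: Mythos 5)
For part~(\ref{it:area(omega)>t0}) your argument agrees with the paper's: the random plane wave is a.s.\ a Helmholtz solution, each nodal domain is a first Dirichlet eigendomain with eigenvalue $1$, and Faber--Krahn gives the lower bound by the volume of the ball of radius $j_{n/2-1,1}$. No issue there.

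For part~(2) you have correctly identified the three-step skeleton (deterministic model function, $C^1$-stability of the volume of its nodal domain via Lemma~\ref{lem:vol pert}, then positive-probability-plus-translation-invariance to get positive density), and you have also correctly identified where the real work is: producing, for \emph{every} $t\ge t_0$, a genuine Helmholtz solution whose nodal domain near the origin has volume close to $t$ and whose gradient is bounded away from zero there. But you leave exactly that step unresolved. Your suggestion to take ``a small radial perturbation of $J_0(|x|)$'' does not obviously work: among Helmholtz solutions in the plane the only radial ones are linear combinations of $J_0$ and $Y_0$, and $Y_0$ is singular at the origin, so you cannot radially deform the central nodal disc within the admissible class; and ``suitable elongation/deformation'' is not a construction. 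The paper's resolution is more specific and worth knowing: it builds an explicit one-parameter family of \emph{domains} $\Omega_L$ (a cylinder of length $L$ and radius $j_{n/2-1,1}$ capped by two hemispheres), observes that $\lambda_1(\Omega_L)$ decreases continuously from $1$ at $L=0$ to a positive limit $(j_{(n-1)/2-1,1}/j_{n/2-1,1})^2$ as $L\to\infty$, and then rescales each $\Omega_L$ by $\sqrt{\lambda_L}$ so that the first eigenvalue is pinned at $1$ while the volume sweeps continuously through $[t_0,\infty)$. After that one smooths to an analytic boundary (Whitney approximation), takes the first Dirichlet eigenfunction $g_t$ (whose normal derivative is non-vanishing by the boundary Hopf lemma, supplying the nondegeneracy you flagged), and invokes the Lax--Malgrange theorem to produce a \emph{global} Helmholtz solution $f_t$ that is $C^1$-close to $g_t$ on a neighbourhood of $\partial\Omega_t$. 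Lemma~\ref{lem:vol pert} then transfers the volume, and the rest is as in Theorem~\ref{thm:scal invar increase gen}. Without this (or an equivalent) construction, your part~(2) is an outline rather than a proof; the continuity argument for the eigenvalue of the capped cylinder and the rescaling trick are precisely the ideas that make ``hit every $t\ge t_0$'' possible. One small additional remark: you attribute the restriction to $n=2$ to the difficulty of constructing $g_t$; in fact the paper's domain construction is dimension-free, so whatever the intended reason for the restriction in the statement, it is not the one you suggest.
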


The remaining part of section \ref{sec:scal invar} is dedicated to the proofs of theorems
\ref{thm:scal invar exist}, \ref{thm:scal invar increase gen} and \ref{thm:scal invar increase RWM}.

\subsection{Integral-Geometric Sandwich}

Let $\Gamma\subseteq\R^{n}$ be a hypersurface (a curve for $n=2$).
For $u\in \R^{n}$, $r>0$ and $t>0$ we denote $\nod(\Gamma,t;u,r)$ the number
of domains of $\Gamma$ of $n$-dimensional volume bounded by $t$ lying entirely in $B_{u}(r)$ .
Similarly, define $\nod^{*}(\Gamma,t;u,r)$ by relaxing the condition to domains merely {\em intersecting}
$\overline{B}_{u}(r)$. We use the shortcuts
$$
\nod(g,t;u,r) :=\nod(g^{-1}(0),t;u,r),
$$
respectively
$$
\nod^{*}(g,t;u,r) :=\nod^{*}(g^{-1}(0),t;u,r),
$$
and $\nod(\cdot,\cdot;r) :=\nod(\cdot ,\cdot;0,r)$ (resp.
$\nod^{*}(\cdot,\cdot;r) :=\nod^{*}(\cdot ,\cdot;0,r)$), consistent to \S\ref{sec:not bas set}. Finally, let
$\nod(g;u,r)=\nod(g,\infty;u,r) $
be the total number of domains lying inside $B_{0}(r)$.

\begin{lemma}[cf. ~\cite{So12}, Lemma 1]
\label{lem:int geom sandwitch euclidian}
Let $\Gamma$ be a closed hypersurface. Then for $0<r<R$, $t>0$,
\begin{equation}
\label{eq:geom int scal invar}
\begin{split}
\frac{1}{\vol (B(r))}\int\limits_{B(R-r)} \nod(\Gamma,t;u,r)du
&\le  \nod(\Gamma,t;R)    \\&\le \frac{1}{\vol (B(r))}\int\limits_{B(R+r)} \nod^{*}(\Gamma,t;u,r)du.
\end{split}
\end{equation}
\end{lemma}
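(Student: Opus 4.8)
The plan is to adapt the classical integral-geometric argument of Nazarov--Sodin (\cite{So12}, Lemma~1), which bounds the total number of nodal domains in a ball, to the refined setting where one only counts domains of volume at most $t$. The key observation is that the volume constraint ``$\vol_n(\omega)\le t$'' is a property of the domain $\omega$ alone and does not interact with the averaging over translations, so the same covering/counting scheme goes through verbatim once one is careful about which domains get counted near the boundary of the averaging region.

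First I would prove the lower bound. Fix $u\in B(R-r)$. Every domain $\omega$ of $\Gamma$ that lies entirely in $B_u(r)$ then also lies entirely in $B(R)$ (since $B_u(r)\subseteq B(R)$ by the triangle inequality), and its volume is unaffected by the inclusion, so each such $\omega$ contributes to $\nod(\Gamma,t;R)$. Thus for every fixed $\omega$ counted by the right-hand side, the set of centres $u\in B(R-r)$ for which $\omega\subseteq B_u(r)$ has volume at most $\vol(B(r))$ — indeed $\omega\subseteq B_u(r)$ forces $u$ to lie within distance $r$ of any point of $\omega$, hence in a translate of $B(r)$. Fubini/Tonelli applied to the indicator $\mathbf{1}[\omega\subseteq B_u(r),\ \vol_n(\omega)\le t]$ summed over domains $\omega$ of $\Gamma$ then yields
\[
\int_{B(R-r)}\nod(\Gamma,t;u,r)\,du=\sum_{\substack{\omega:\ \vol_n(\omega)\le t}}\vol\{u\in B(R-r):\omega\subseteq B_u(r)\}\le \vol(B(r))\cdot\nod(\Gamma,t;R),
\]
which after dividing by $\vol(B(r))$ is the left inequality. (One only needs that each such $\omega$ is counted in $\nod(\Gamma,t;R)$, which holds since $\omega\subseteq B(R)$.)

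For the upper bound, the point is that every domain $\omega\subseteq B(R)$ with $\vol_n(\omega)\le t$ must be ``captured'' by some ball $B_u(r)$ with $u$ in the slightly enlarged region $B(R+r)$, in the relaxed sense of \emph{intersecting} $\overline B_u(r)$: pick any $u_0\in\omega\subseteq B(R)\subseteq B(R+r)$, then $\omega$ meets $\overline B_{u_0}(r)$, so $\omega$ is counted by $\nod^{*}(\Gamma,t;u_0,r)$, and moreover $\omega$ meets $\overline B_u(r)$ for every $u$ in a full translate of $B(r)$ inside $B(R+r)$ (namely all $u$ within distance $r$ of $u_0$). Hence, reversing the Fubini computation,
\[
\int_{B(R+r)}\nod^{*}(\Gamma,t;u,r)\,du=\sum_{\substack{\omega\ \text{meets}\ \overline B_u(r)\ \text{for some}\ u\in B(R+r)\\ \vol_n(\omega)\le t}}\vol\{u\in B(R+r):\omega\cap\overline B_u(r)\ne\emptyset\}\ge \vol(B(r))\cdot\nod(\Gamma,t;R),
\]
since the outer sum ranges over a superset of the domains counted by $\nod(\Gamma,t;R)$ and each such domain contributes volume at least $\vol(B(r))$. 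Dividing by $\vol(B(r))$ gives the right inequality.

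The only genuinely delicate point — and the step I would treat most carefully — is the bookkeeping of domains meeting the boundary sphere $\partial B_u(r)$: in the relaxed count $\nod^{*}$ one allows domains that stick out of $B_u(r)$, and one must make sure no domain of $\Gamma$ is double-discarded or that the volume estimate ``contributes at least $\vol(B(r))$ worth of centres'' is not spoiled for domains near $\partial B(R)$; this is exactly why the averaging ball is taken over $B(R+r)$ rather than $B(R)$. Everything else is the standard translation-invariant volume computation, and the volume truncation $t$ is simply carried along as an inert side condition throughout. This mirrors Lemma~1 of \cite{So12} and requires no new ideas beyond the observation that the refinement by $t$ commutes with the geometric averaging.
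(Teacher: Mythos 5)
Your proof is correct and follows essentially the same integral-geometric averaging argument as the paper: for each nodal domain of volume at most $t$, compare the volume of the set of centres $u$ that ``see'' it (either fully containing it, or intersecting its closure) to $\vol(B(r))$, then apply Fubini. The only cosmetic difference is that you work directly with domains $\omega$ and pick an anchor point $u_0\in\omega$, whereas the paper phrases the same estimates via the outer boundary component $\gamma$ and the auxiliary sets $G_*(\gamma)=\bigcap_{v\in\gamma}B_v(r)$ and $G^*(\gamma)=\bigcup_{v\in\gamma}\overline{B_v(r)}$; both yield the identical two-sided bound.
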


\begin{proof}

We follow along the lines of the proof of ~\cite[Lemma $1$]{So12}:
For a connected component $\gamma$ of $\Gamma$ denote $\mathcal{A}(\gamma)$ to be the area of
the domain having $\gamma$ as its outer boundary.

Let $\gamma$ be any connected component of $\Gamma$.
Define
\begin{equation*}
G_{*}(\gamma) = \bigcap\limits_{v\in \gamma} B_{v}(r) = \{u:\gamma\subseteq B_{u}(r) \}
\end{equation*}
and
\begin{equation*}
G^{*}(\gamma) = \bigcup\limits_{v\in \gamma} \overline{B_{v}(r)} =
\{u:\gamma \cap \overline{B_{u}(r)}\ne \emptyset \}.
\end{equation*}
We have for every $v\in \gamma$, $$G_{*}(\gamma) \subseteq B_{v}(r) \subseteq G^{*}(\gamma),$$ and thus, in particular,
\begin{equation}
\label{eq:vol(G*)<=vol B(r)<=vol(G*)}
\vol(G_{*}(\gamma)) \le \vol(B(r)) \le \vol(G^{*}(\gamma)).
\end{equation}
Summing up \eqref{eq:vol(G*)<=vol B(r)<=vol(G*)} for all connected components $\gamma\subseteq \Gamma$ lying inside $B(R)$,
corresponding to domains of volume at most $t$, we obtain
\begin{equation}
\label{eq:sum vol*bi(g)}
\begin{split}
\sum\limits_{\substack{\gamma:\: \mathcal{A}(\gamma)\le t \\ \gamma \subseteq B(R)}}\vol(G_{*}(\gamma))  &\le
\vol(B(r))\cdot \nod(\Gamma,t;R)  \le
\sum\limits_{\substack{\gamma:\: \mathcal{A}(\gamma)\le t \\ \gamma \subseteq B(R)}} \vol(G^{*}(\gamma)) .
\end{split}
\end{equation}
Writing the volume as an integral
\begin{equation*}
\vol(G_{*}(\gamma)) = \int\limits_{G_{*}(\gamma)}du,
\end{equation*}
and exchanging the order of summation and integration we obtain
\begin{equation}
\label{eq:sum vol G_*(g)*bi(g)lower}
\sum\limits_{\substack{\gamma:\: \mathcal{A}(\gamma)\le t \\ \gamma \subseteq B(R)}}\vol(G_{*}(\gamma)) \ge
\int\limits_{B(R-r)}\left[ \sum\limits_{\substack{\gamma:\: \mathcal{A}(\gamma)\le t\\ \gamma \subseteq B_{u}(r)}} 1 \right] du
= \int\limits_{B(R-r)} \nod (\Gamma,t;u,r)  du,
\end{equation}
since if $u\in B(R-r)$ then $B_{u}(r)\subseteq B(R)$.
Similarly,
\begin{equation}
\label{eq:sum vol G^*(g)*bi(g)upper}
\sum\limits_{\substack{\gamma:\: \mathcal{A}(\gamma)\le t\\ \gamma\subseteq B(R)}} \vol(G^{*}(\gamma))
\le \int\limits_{B(R+r)}\left[
\sum\limits_{\substack{\gamma:\: \mathcal{A}(\gamma)\le t\\ \gamma \cap \overline{B_{u}(r)}\ne\emptyset}}1 \right]du
= \int\limits_{B(R+r)}\nod^{*}(\Gamma,t;u,r)du,
\end{equation}
since if $\gamma\subseteq B(R)$ and for some $u$, $\overline{B_{u}(r)}\cap \gamma \ne \emptyset$, then
necessarily $u\in B(R+r)$.
The statement of the present lemma then follows from substituting \eqref{eq:sum vol G_*(g)*bi(g)lower}
and \eqref{eq:sum vol G^*(g)*bi(g)upper} into \eqref{eq:sum vol*bi(g)}, and dividing both sides by $\vol B(r)$.

\end{proof}

\subsection{Proof of Theorem \ref{thm:scal invar exist}}

\begin{proof}

Let $t>0$ be given, and fix $r>0$; we apply \eqref{eq:geom int scal invar} to $\Gamma=F^{-1}(0)$:
\begin{equation*}
\begin{split}
&\left( 1-\frac{r}{R}\right)^{n}\frac{1}{\vol B(R-r)}\int\limits_{B(R-r)} \frac{\nod (F,t;u,r)}{\vol (B(r))}du
\le \frac{\nod (F,t;R)}{\vol (B(R))}
\\
&\le \left( 1 + \frac{r}{R}\right)^{n}\cdot \frac{1}{\vol B(R+r)} \int\limits_{B(R+r)} \frac{\nod^{*}
(F,t;u,r)}{\vol (B(r))}du
\\&\le \left( 1 + \frac{r}{R}\right)^{n}\frac{1}{\vol B(R+r)}\int\limits_{B(R+r)} \frac{\nod(F,t;u,r)+\mathcal{C}(u,r;t,F)}{\vol (B(r))}du,
\end{split}
\end{equation*}
where $\mathcal{C}(u,r;t,F)$ is the number of domains $\omega\in \Omega(F)$ intersecting $\partial B_{u}(r)$,
of volume
$$
\vol (\omega) \le t,
$$
bounded by the total number of critical points of the restriction $F|_{\partial B_{u}(r)}$
of $F$ to the hypersphere $\partial B_{u}(r)$,
and
$$
\vol(B(R\pm r)) = \vol(B(R))\cdot \left( 1\pm\frac{r}{R} \right)^{n}.
$$
We rewrite
$\nod (F,t;u,r) = \nod (\tau_{u} F,t;r)$, where $\tau_{u}$ is the translation operator
$$
(\tau_{u}F)(x) = F(u+x).
$$
Choose $r$ so small that $(1\pm r/R)^n$ are $\epsilon$-close to $1$, then we have
\begin{equation}
\label{eq:sandwitch for N(R;S,F)}
\begin{split}
&\left( 1-\epsilon\right)\frac{1}{\vol B(R-r)}\int\limits_{B(R-r)} \frac{\nod(\tau_{u}F,t;r)}{\vol (B(r))}du
\le \frac{\nod(F,t;r)}{\vol (B(R))}
\\&\le \left( 1 + \epsilon\right)\frac{1}{\vol B(R+r)}\int\limits_{B(R+r)} \frac{\nod(\tau_{u}F,t;r)+
\mathcal{C}(\tau_{u} F,t;r)}{\vol (B(r))}du.
\end{split}
\end{equation}

Note that for every $r,t$ the functional
$$
F\mapsto \Upsilon_{r;t}(F):=\frac{\nod (F,t;r)}{\vol (B(r))}
$$
(and its translations) is measurable, and since the number of nodal domains is bounded by the number of critical points, this functional has finite expectation by Lemma \ref{lem:Kac Rice crit ball sphere}, part (1).
It then follows from the ergodic theory that both $$\frac{1}{\vol B(R+r)}\int\limits_{B(R+r)}
\frac{\nod_{\cdot} (\tau_{u}F,t;r)}{\vol (B(r))}du$$
and $$\frac{1}{\vol B(R-r)}\int\limits_{B(R-r)} \frac{\nod_{\cdot}(\tau_{u}F,t;r)}{\vol (B(r))}du$$ converge
to (the same) limit in $L^1$
$$\frac{1}{\vol B(R)}\int\limits_{B(R)} \frac{\nod_{\cdot}(\tau_{u}F,t;r)}{\vol (B(r))}du \rightarrow c_{r;t}(\rho):=\E[\Upsilon_{r;t}].$$
(Initially only the existence of the limit is known; it equals the mean value by $L^{1}$-convergence.)

Observe that, if we get rid of $$\mathcal{C}(\tau_{u} F,t;r)$$ on the rhs of \eqref{eq:sandwitch for N(R;S,F)}
then, up to $\pm \epsilon$, both the lhs and the rhs of
\ref{eq:sandwitch for N(R;S,F)} would converge to the same limit $c_{r;t}(\rho)$ (in either $L^{1}$ or a.s.).
We will be able to get rid of $\mathcal{C}(\tau_{u} F,t;r)$ asymptotically for $r$ large; it
will yield that as $r\rightarrow\infty$, we have the limit $c_{r;t}(\rho)\rightarrow c_{t}(\rho)$
satisfying
\begin{equation}
\label{eq:N(t)/B(R)->ct}
\frac{\nod(F,t;R)}{\vol (B(R))} \rightarrow c_{t}(\rho).
\end{equation}
Setting $$\Psi_{\rho}(t) :=\frac{c_{t}(\rho)}{c(\rho)}$$ with $c(\rho)$ the Nazarov-Sodin constant \eqref{eq:tot numb asymp NS}
will ensure that $\Psi_{\rho}(t)$ obeys \eqref{eq:bi/volB(R)->eta} of Theorem \ref{thm:scal invar exist}.
That $\Psi_{\rho}$ is a distribution function will then follow easily: $$t\mapsto c_{t}(\rho)$$ is monotone
nondecreasing by \eqref{eq:N(t)/B(R)->ct}; $\Psi_{\rho}(\infty)=1$ since $c_{\infty}(\rho) = c(\rho)$ is the Nazarov-Sodin constant.

To see that indeed we can get rid of $\mathcal{C}(\tau_{u} F,t;r)$ we use the same ergodic argument as before, now on
$$F\mapsto\frac{\mathcal{C}(F,t;r)}{\vol (B(r))},$$ yielding
the $L^{1}$ limit
$$\frac{1}{\vol B(R+r)}\int\limits_{B(R+r)}\frac{\mathcal{C}(\tau_{u} F,t;r)}{\vol (B(r))}\rightarrow a_{r;t}(\rho),$$
whence
$$
a_{r;t}(\rho) = \E\left[\frac{\mathcal{C}(\tau_{u} F,t;r)}{\vol (B(r))}\right] = O\left(\frac{1}{r}\right)$$
by the second part of Lemma
\ref{lem:Kac Rice crit ball sphere}.
Hence \eqref{eq:sandwitch for N(R;S,F)} implies
\begin{equation*}
\E \left[\left|\frac{\nod (F,t;R)}{\vol (B(R))} -  c_{r;t}(\rho)\right|\right] = O\left(\epsilon+\frac{1}{r}\right);
\end{equation*}
this proves the existence of the limits $$c_{t}(\rho):=\lim\limits_{r\rightarrow\infty}c_{r;t}(\rho),$$
and, as it was mentioned above, $$\Psi_{\rho}(t) :=\frac{c_{t}(\rho)}{c(\rho)},$$ is the distribution function
satisfying the $L^{1}$-convergence \eqref{eq:bi/volB(R)->eta} we were after in Theorem \ref{thm:scal invar exist}.

\end{proof}

\begin{remark}
This proof also shows a.s. convergence but we will not need that for the rest (i.e. the Riemannian case).
\end{remark}

\subsection{Proofs of Theorems \ref{thm:scal invar increase gen} and \ref{thm:scal invar increase RWM}}

\begin{proof}[Proof of Theorem \ref{thm:scal invar increase gen}]

Since the spectral measure satisfies conditions $(\rho1)-(\rho3)$, the existence of the limiting distribution  $\Psi$ is
guaranteed by Theorem \ref{thm:scal invar exist}. We only have to show that $\Psi$ is strictly increasing.

Proof will consist of three steps. First we will construct a deterministic function with nodal domain of given area. After that we will show that the probability that there is a random function with almost the same nodal domain is positive. Finally we show that this implies that the expected density of such nodal domains is positive.

The first step is essentially the same as the argument in \cite{SW}[Proposition 5.2] By condition $(\rho4^{*})$ the interior of the support of $\rho$ is non-empty. Let us assume that $B(\zeta_0,r_0)\subset \supp \rho$. We are going to show that this implies that for every compact $Q$ the functions from $\mathcal{H}(\rho)$ are dense in $C^k(Q)$ for every $k$.

Let $\phi_n$ be a sequence of positive $C^\infty$ functions with supports converging to $\zeta_0$ which approximate $\delta$-function at $\zeta_0$. Functions $\psi_n(\zeta)=\phi_n(\zeta)+\phi_n(-\zeta)$ and all their derivatives belong to $L^2_{\mathrm{sym}}(\rho)$ and hence their Fourier transforms belong to $\mathcal{H}$. The Fourier transform of $\psi_n$ converges to $e^{2\pi i <\zeta_0,x>}$ in any $C^k(Q)$ and its derivatives $\partial^\alpha \psi_n$ converge to  $(-2\pi x)^\alpha e^{2\pi i <\zeta_0,x>}$, where $x^\alpha=\prod x_i^{\alpha_i}$. This proves that we can approximate any polynomial (times a fixed plane wave). Since the polynomials are dense in $C^k(Q)$, we have that $\mathcal{H}$ is dense in $C^k(Q)$.

In particular, for given $t>0$ and $\epsilon>0$ we can take a $C^1$ function such that its nodal domain around the origin is inside some disc $B(0,r)$  and has area $t$. Moreover, we can assume that its gradient is bounded away from zero in some neighbourhood of the boundary of this nodal domain. By approximating this function by a function from $\mathcal{H}$ and applying Lemma \ref{lem:vol pert} we obtain a function $f_t$ from $\mathcal{H}$ such that its nodal domain around the origin has area $\epsilon$-close to $t$ and its gradient is bounded away from zero on the nodal line.

Without loss of generality we may assume that  $\mathcal{H}$ norm of $f_t$ is $1$ and
extend it to an orthonormal basis $\phi_i$.
In this basis $F$ could be written as
$$
F=c_0 f_t +\phi,
$$
where $c_0$ is a standard Gaussian random variable and
$\phi$ is a Gaussian function independent of $c_{0}$ (spanned by all additional basis functions $\phi_i$). Since $f_t$ is non-degenerate, the probability that
$$
\max\{c_{0}|f|,c_{0}|\nabla f_t|\}>1
$$
in some (fixed) neighbourhood of the nodal domain containing the origin is strictly positive. Let $
r$ be such that the nodal domain of $f_t$ is in $B(0,r)$, then if $C^1(B(0,2r))$ norm of $\phi$ is sufficiently small (see Lemma \ref{lem:vol pert} for the precise statement), then the volume of the nodal domain of $F$ is $\epsilon$ close to that of $f_t$. It is a standard fact
(cf. ~\cite{NaSo09}) that the probability that $\phi$ have small norm is strictly positive. This completes the proof that the area of the nodal domain containing the origin is $2\epsilon$ close to $t$ is strictly positive.

We can pack $\R^2$ with infinitely many copies of the disc of radius $2r$ such that this covering is locally uniformly finite. By translation invariance, for each disc the probability that there is a nodal domain of area $2\epsilon$-close to $t$ is strictly positive (independently of the disc). This implies that
$$
\frac{\nod (F,t+2\epsilon;R)-\nod (F,t-2\epsilon;R)}{\vol (B(R))}>c >0
$$
for some positive $c$ and all sufficiently large $R$. This prove that $\Psi(t+2\epsilon)-\Psi(t-2\epsilon)>c$ which means that $\Psi$ is strictly increasing for $t>t_0$.

\end{proof}

\begin{proof}[Proof of Theorem \ref{thm:scal invar increase RWM}]

The proof of this theorem is almost exactly the same as the proof of Theorem \ref{thm:scal invar increase gen}, but the first deterministic step is different. The main difference is that the spectral measure is supported on the sphere, hence its support has empty interior and the space $\mathcal{H}$ is not dense in $C^k(Q)$.

The fact that $\Psi$ vanishes for $t<t_0$ is completely deterministic and follows from Faber--Krahn inequality. Recall that the random plane wave is a solution to $\Delta f+f=0$. Let $\omega$ be a nodal component of a plane wave $F$. Since $F$ has a constant sign in $\omega$ it must be the first eigenfunction of Laplacian in $\omega$. This implies that the first eigenvalue in $\omega$ is $1$. By
Faber--Krahn inequality the volume of $\omega$ is greater then the volume of the ball for which the main eigenvalue is also equal to $1$. It is a well known fact that $|x|^{1-n/2}J_{n/2-1}(|x|)$ is an eigenfunction of Laplacian in $\R^n$ with eigenvalue $1$. It's nodal domain containing the origin is a ball of radius $j_{n/2-1,1}$ and area $t_0$. This proves that the volume of every nodal domain is bounded below by $t_0$.

To prove the last part of the theorem we first claim that for every $t\ge t_0$ there is a domain $\Omega_t$ such that $\vol(\Omega_t)=t$ and the main eigenvalue of the Laplacian is $1$. We define $\Omega_L$ to be the union of the cylinder of length $L$ and radius $j_{n/2-1,1}$ with  two hemispherical caps at  both ends. For $L=0$ this is the ball of the radius $j_{n/2-1,1}$ and as $L\to \infty$ it converges to the infinite cylinder. Since $\Omega_L$ depends on $L$ continuously, the main eigenvalue $\lambda_L$ changes continuously  from $\lambda_0=1$ to $\lambda_\infty=(j_{(n-1)/2-1,1}/j_{n/2-1,1})^2>0$. By rescaling $\Omega_L$ by $\sqrt{\lambda_L}$ we obtain a family of domains such that their main eigenvalue is $1$ and their volumes change continuously from $t_0$ to infinity. Let us relabel this family and say that $\Omega_t$ is the element of the family with volume $t$.

After that we follow the strategy from \cite{CS}. By Whitney approximation theorem we can construct a domain with analytic boundary which is an arbitrary small perturbation of $\Omega_t$. By continuity of the main eigenvalue we can assume that it is arbitrary close to $1$. By rescaling the domain we obtain a domain with analytic boundary, main eigenvalue $1$ and volume  $\epsilon$ close to $t$. Abusing notation we call this domain $\Omega_t$ as well.

Let $g_t>0$ be the main eigenfunction of Laplacian in $\Omega_t$. Since the boundary is analytic, it could be extended to a small neighbourhood $B$ of $\partial\Omega_t$ in such a way that $\inf \|\nabla g_t\|>0$ in a small tubular neighbourhood of $\partial \Omega_t$. By Lax-Malgrange Theorem for every $b>0$  there is a function $f_t$ which is a {\em global} solution to \eqref{eq:Helmholz} such that $\|f_t-g_t\|_{C^1(B)}<b$. If $b$ is sufficiently small, then by Lemma \ref{lem:vol pert} there is a nodal domain of $f_t$ with volume $\epsilon$ close to $\vol(\Omega_t)$. This proves that there is a {\em deterministic} non-degenerate plane wave with a nodal domain such that its volume is $2\epsilon$-close to $t$.
The rest of the proof is exactly the same as in Theorem \ref{thm:scal invar increase gen}.
\end{proof}

\section{Local results}
\label{sec:local results}

\subsection{Local setting and statement of the local result}

Recall that $\gfr_{n,\alpha}$ is the ``scaling limit" of $f_{n,\alpha;T}$ as $T\rightarrow\infty$, around every point $x\in\M$
in the following sense (cf. section \ref{sec:semiclassical limit}). We know that for $T$ large, their  covariance functions $K_{\alpha,T}(u,v)$ and $r_{n,\alpha}$ are, after scaling, asymptotic to each other, uniformly
on a large ball $B(R)\subseteq\R^{n}$. By Lemma \ref{lem:E[|fxL-Fx|<alpha]}they could be coupled so that
$$
\|f_{x;T}-\gfr\|
$$
is arbitrarily small in $C^{1}$-norm on $B(R)$
(or $B(2R)$).

For $u\in \M$, $r>0$ sufficiently small, $t>0$ and $g:\M\rightarrow \R$ a smooth (deterministic) function denote
$\nod(g,t;u,r)$ (resp. $\nod^{*}(g,t;u,r)$)
the number of domains $\omega\in\Omega(g)$ of $g^{-1}(0)$ of volume $$\vol(\omega)\le t$$ lying entirely inside (resp. intersecting)
the geodesic ball $B_{u}(r)\subseteq\M$.
Theorem \ref{thm:loc scal} to follow immediately states that, with the coupling as above, unless $t$ is a discontinuity point
of the limiting volume distribution (i.e. atoms of the $\Psi_{\rho}$ obtained from an application of Theorem \ref{thm:scal invar exist}
on $\gfr_{n,\alpha}$), after appropriate rescaling,
the nodal domain volumes of $f_{\alpha;T}$ are {\em locally} point-wise approximated by the ones of $\gfr_{n,\alpha}$.
Otherwise, if $t_{0}$ is an atom of the said $\Psi_{\rho}$ (existence unlikely, by Conjecture \ref{conj:Psi lim dist gen band lim}),
then the corresponding probability mass may spread in an interval
$(t_{0}-\epsilon,t_{0}+\epsilon)$ ($\epsilon>0$ arbitrarily small) beyond our control
(see the proof of Proposition \ref{prop:bi(R-1,Fx)<=bi(R/L,fL)<=bi(R+1,Fx)} below).

\begin{theorem}[Cf. \cite{So12} Theorem 5]
\label{thm:loc scal}
Let $f(x)=f_{\alpha;T}(x)$ be the random band limited functions \eqref{eq:f band lim def}, $\Psi=\Psi_{n,\alpha}$
the distribution function prescribed by Theorem \ref{thm:scal invar exist} applied on $\gfr_{n,\alpha}$.
Then for every continuity point $t>0$ of $\Psi_{n,\alpha}(\cdot)$, $x\in\M$ and $\epsilon>0$ we have
\begin{equation}
\label{eq:asymp cnt loc c(x)}
\lim\limits_{R\rightarrow\infty}\limsup\limits_{T\rightarrow\infty}
\prob \left\{ \left| \frac{\nod\left(f,\frac{t}{T^{n}};x,\frac{R}{T}\right)}{c(n,\alpha)\cdot\vol (B(R))}-\Psi(t)\right|  > \epsilon\right\}
 = 0,
\end{equation}
where $c(n,\alpha)>0$ is the Nazarov-Sodin constant of $\gfr_{n,\alpha}$.
\end{theorem}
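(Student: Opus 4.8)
The plan is to transfer the already-established Euclidean result, Theorem~\ref{thm:scal invar exist} applied to the scaling limit $\gfr_{n,\alpha}$, to the band-limited function $f_{\alpha;T}$ by exploiting the coupling of Lemma~\ref{lem:E[|fxL-Fx|<alpha]}. First I would pass to the rescaled picture: using the scaling \eqref{eq:fxL def} and the near-isometry \eqref{eq:|vol(D)-vol(Phix(d))|<eps}, the quantity $\nod\!\left(f,\tfrac{t}{T^{n}};x,\tfrac{R}{T}\right)$ is, up to an error in the volume threshold controlled by \eqref{eq:|vol(D)-vol(Phix(d))|<eps} that can be absorbed because $t$ is a continuity point of $\Psi$, sandwiched between $\nod(f_{x;T},t-\delta;x,R)$ and $\nod(f_{x;T},t+\delta;x,R)$ for $\delta\to 0$ as $T\to\infty$. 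Then I would invoke the coupling: on $\overline{B}(2R)$ the field $f_{x;T}$ is $C^{1}$-close to $\gfr_{n,\alpha}$ with high probability, and by the volume-perturbation stability (Lemma~\ref{lem:vol pert}, to be used via Proposition~\ref{prop:bi(R-1,Fx)<=bi(R/L,fL)<=bi(R+1,Fx)}) the nodal domains of $f_{x;T}$ lying well inside $B(R)$ are in volume-preserving bijection with those of $\gfr_{n,\alpha}$, up to a bounded number of "bad" domains that are cut by $\partial B(R)$ or on which the gradient of $\gfr_{n,\alpha}$ is too small; the count of the latter is $O(\vol(\partial B(R)))=O(R^{n-1})$ by Lemma~\ref{lem:Kac Rice crit ball sphere}, hence negligible after dividing by $\vol(B(R))$.

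After these reductions the statement becomes: for every continuity point $t$,
$$
\frac{\nod(\gfr_{n,\alpha},t;R)}{c(n,\alpha)\vol(B(R))}\xrightarrow[R\to\infty]{}\Psi(t)
$$
with concentration. This is exactly \eqref{eq:concentration around mean} from Theorem~\ref{thm:scal invar exist}, whose hypotheses $(\rho 1)$--$(\rho 4)$ are satisfied by $\gfr_{n,\alpha}$ (the spectral measure is the normalized Lebesgue measure on the annulus $A_{\alpha}$ for $\alpha<1$, or the normalized surface measure on the sphere for $\alpha=1$, both non-atomic, with moments of all orders, full-dimensional support, and positive Nazarov--Sodin constant). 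So I would first fix $\epsilon$, then choose $R$ large so that the Euclidean concentration pins $\nod(\gfr_{n,\alpha},t';R)/(c(n,\alpha)\vol(B(R)))$ within $\epsilon/3$ of $\Psi(t')$ for $t'=t\pm\delta$, using continuity of $\Psi$ at $t$ to take $\delta$ small enough that $|\Psi(t\pm\delta)-\Psi(t)|<\epsilon/3$; then send $T\to\infty$ so that the coupling error and the boundary/small-gradient correction are each $<\epsilon/3$ with probability tending to $1$. Taking $\limsup_{T\to\infty}$ and then $\lim_{R\to\infty}$ yields \eqref{eq:asymp cnt loc c(x)}.

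The main obstacle is the interchange of the two limits and the role of the discontinuity set. The Euclidean convergence holds only as $R\to\infty$, while the coupling of $f_{x;T}$ to $\gfr_{n,\alpha}$ on $\overline{B}(2R)$ degrades as $R$ grows (the threshold $T_{0}(R,b)$ in Lemma~\ref{lem:E[|fxL-Fx|<alpha]} depends on $R$); this forces the precise order $\lim_{R}\limsup_{T}$ in the statement and requires the $\epsilon/3$-budgeting above to be done in the right order — first freeze $R$ large enough for the Euclidean step, then exploit that for that fixed $R$ the coupling becomes arbitrarily good as $T\to\infty$. The second delicate point, and the reason the theorem is restricted to continuity points $t$ of $\Psi$, is that the near-isometry \eqref{eq:|vol(D)-vol(Phix(d))|<eps} and the $C^{1}$-perturbation in Lemma~\ref{lem:vol pert} only control domain volumes up to a vanishing error $\delta$, so the threshold fluctuates in $(t-\delta,t+\delta)$; if $t$ were an atom of $\Psi$ an uncontrolled proportion of domains could have volume exactly near $t$ and the sandwich would not close. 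Continuity of $\Psi$ at $t$ exactly says that this ambiguous proportion is $o(1)$ as $\delta\to0$, which is what makes the argument go through. Beyond these, everything is a packaging of results already proved: the integral-geometric sandwich, the Kac--Rice bounds, Theorem~\ref{thm:scal invar exist}, and the stability Lemma~\ref{lem:vol pert}.
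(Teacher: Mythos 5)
Your proposal follows the paper's proof strategy closely: reduce to the Euclidean field $\gfr_{n,\alpha}$ via the coupling (Lemma~\ref{lem:E[|fxL-Fx|<alpha]}) and the volume-stability lemma (Lemma~\ref{lem:vol pert}), packaged as Proposition~\ref{prop:bi(R-1,Fx)<=bi(R/L,fL)<=bi(R+1,Fx)}, then invoke the Euclidean concentration of Theorem~\ref{thm:scal invar exist} together with continuity of $\Psi$ at $t$, with the correct order of limits ($\lim_{R}\limsup_{T}$) and the correct $\epsilon$-budgeting. This is essentially the argument given in the paper.

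One remark on the intermediate reasoning: the additive error $\eta\cdot R^{n}$ in Proposition~\ref{prop:bi(R-1,Fx)<=bi(R/L,fL)<=bi(R+1,Fx)} does not come from domains cut by $\partial B(R)$ (those are absorbed by the $R\mapsto R\pm1$ shrinking/enlarging of the ball, using \eqref{eq:psi(gamma)-gamma small}), nor from small-gradient domains (excluded wholesale via the event $\Delta_{4}$), and it is not $O(R^{n-1})$. It comes from domains whose boundary has large $(n-1)$-volume, excised via the event $\Delta_{5}$: for these the perturbation estimate \eqref{eq:vol pert dom} is too weak to keep the volume threshold under control. Their count is bounded by $\eta R^{n}$ with $\eta$ adjustable --- still $\Theta(R^{n})$, not $o(R^{n})$ --- and the paper handles this by choosing $\eta<\epsilon\,\vol(B(1))/2$ rather than relying on a vanishing error. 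Since you cite the proposition as a black box, this does not break your argument, but the justification you offered for discarding the bad domains is not the mechanism that actually works.
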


The rest of this section is dedicated to giving a proof of Theorem \ref{thm:loc scal}.

\subsection{Proof of Theorem \ref{thm:loc scal}}
\label{sec:loc scal proof}

We formulate the following proposition that will imply Theorem \ref{thm:loc scal}. The proof is postponed till  section \ref{sec:bi(R-1,Fx)<=bi(R/L,fL)<=bi(R+1,Fx)}
after some preparatory work in section \ref{sec:bi(R-1,Fx)<=bi(R/L,fL)<=bi(R+1,Fx) prep}.

\begin{proposition}
\label{prop:bi(R-1,Fx)<=bi(R/L,fL)<=bi(R+1,Fx)}
Let $x\in \M$, $R>0$ sufficiently big, $\delta,\xi,\eta>0$ small be given, and $t>0$.
Then outside of an event of probability at most $\delta$, for all $T>T_{0}(R,\delta,\xi)$ we have
\begin{equation}
\label{eq:bi(R-1,Fx)<=bi(R/L,fL)<=bi(R+1,Fx)}
\begin{split}
\nod(\gfr_{n,\alpha},t-\xi;R-1) - \eta\cdot R^{n} &\le
\nod \left(f_{x;T},t;R\right) \\&\le \nod (\gfr_{n,\alpha},t+\xi;R+1) + \eta\cdot R^{n}.
\end{split}
\end{equation}
\end{proposition}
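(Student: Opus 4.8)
The plan is to couple $f_{x;T}$ and $\gfr_{n,\alpha}$ on the ball $\overline{B}(R+2)$ using Lemma~\ref{lem:E[|fxL-Fx|<alpha]}, and then to compare nodal domains of the two (now deterministic, on the event we restrict to) $C^1$-close functions. First I would invoke Lemma~\ref{lem:E[|fxL-Fx|<alpha]} with radius $R+2$ and a parameter $b$ to be chosen at the end: for $T\ge T_0(R+2,b)$ we have $\E\big[\|f_{x;T}-\gfr_{n,\alpha}\|_{C^1(\overline{B}(R+2))}\big]<b$, so by Markov's inequality, outside an event of probability $\delta/2$ we have $\|f_{x;T}-\gfr_{n,\alpha}\|_{C^1(\overline{B}(R+2))}<b/\delta=:\beta$, and $\beta\to 0$ as $b\to 0$ (with $\delta$, $R$ fixed). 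On this good event I will argue the two inclusions of \eqref{eq:bi(R-1,Fx)<=bi(R/L,fL)<=bi(R+1,Fx)} deterministically, up to a further small-probability correction coming from nodal components that hug a sphere.

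For the upper bound: a nodal domain of $f_{x;T}$ contained in $B(R)$ of volume $\le t$ either has its boundary component with gradient bounded away from zero --- in which case, by the perturbation lemma Lemma~\ref{lem:vol pert}, the corresponding nodal domain of $\gfr_{n,\alpha}$ exists, is contained in $B(R+1)$, and has volume $\le t+\xi$ provided $\beta$ is small enough (depending on $\xi$ and the geometry) --- or its boundary passes near a near-critical point of $\gfr_{n,\alpha}$, and the number of such "bad" domains is controlled by the number of critical points of $\gfr_{n,\alpha}$ (or of its restriction to relevant spheres) in $B(R+2)$, which by Lemma~\ref{lem:Kac Rice crit ball sphere} has expectation $O(R^n)$; by Markov again, outside an event of probability $\delta/2$ this count is at most $\eta R^n$ if $\eta$ is chosen after the $O(\cdot)$-constant. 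The map from good domains of $f_{x;T}$ to domains of $\gfr_{n,\alpha}$ is injective (close domains determined by nearby boundary components), which yields $\nod(f_{x;T},t;R)\le \nod(\gfr_{n,\alpha},t+\xi;R+1)+\eta R^n$. The lower bound is symmetric: each nodal domain of $\gfr_{n,\alpha}$ in $B(R-1)$ of volume $\le t-\xi$ whose boundary has non-vanishing gradient produces, again by Lemma~\ref{lem:vol pert}, a nodal domain of $f_{x;T}$ in $B(R)$ of volume $\le t$; the domains on which this fails are again charged to critical points of $\gfr_{n,\alpha}$ near a sphere and bounded by $\eta R^n$ on a further good event. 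Intersecting the (at most four) small-probability events and relabelling their total mass as $\delta$ gives the claim for $T>T_0(R,\delta,\xi)$.

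I would phrase the deterministic comparison through the zero set: since $\|f_{x;T}-\gfr_{n,\alpha}\|_{C^1}<\beta$, wherever $|\nabla \gfr_{n,\alpha}|\ge c>0$ the zero sets $f_{x;T}^{-1}(0)$ and $\gfr_{n,\alpha}^{-1}(0)$ are within Hausdorff distance $O(\beta/c)$ and their boundary components are isotopic there, so the enclosed volumes differ by $O(\beta/c)$ times the boundary surface area, which can be absorbed into $\xi$. The main obstacle is exactly this passage: a nodal domain of $f_{x;T}$ could in principle be thin and long, or could be "created" or "destroyed" near a critical value of $\gfr_{n,\alpha}$, so one must be careful that only $O(R^n)$ domains are affected and that this $O$-constant is uniform. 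This is the content one must extract from the Kac--Rice bounds of Lemma~\ref{lem:Kac Rice crit ball sphere} (applied to $\gfr_{n,\alpha}$ and to its restrictions to the spheres $\partial B(r)$ for the finitely many relevant radii $R-1,R,R+1$), together with the elementary observation that the number of components of $\gfr_{n,\alpha}^{-1}(0)$ meeting a fixed sphere is bounded by the number of critical points of the restriction. Choosing the parameters in the order $\delta$, then $\xi$, then $\eta$, then $b$ (hence $\beta$), then $T_0$, closes the argument.
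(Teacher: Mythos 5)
Your broad strategy matches the paper's: couple $f_{x;T}$ to $\gfr_{n,\alpha}$ via Lemma~\ref{lem:E[|fxL-Fx|<alpha]}, restrict to a good event where the $C^1$-distance is small, and then use the perturbation lemma (Lemma~\ref{lem:vol pert}) to match nodal domains and compare volumes. However, there is a genuine gap in the central comparison step.

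You write that after the perturbation ``the enclosed volumes differ by $O(\beta/c)$ times the boundary surface area, which can be absorbed into $\xi$.'' This absorption does not work without an additional uniform bound on the boundary surface area of the nodal domains in play, and no such a priori bound exists: a nodal component can have large $(n-1)$-volume even if $|\nabla\gfr_{n,\alpha}|$ is bounded below on it. So for a fixed small $b$ there could be many domains whose boundary volume is so large that $O_{\beta,M}(b)\cdot\vol_{n-1}(\partial\omega)$ exceeds $\xi$, and your injection would then send domains of volume $\le t$ to domains of volume $>t+\xi$. The paper's proof introduces the exceptional event $\Delta_5$ (and Lemma~\ref{lem:Delta 5 small}) precisely for this: using the nesting graph to bound the number of boundary components per domain and a Kac--Rice bound on the expected nodal $(n-1)$-volume, it shows that all but $\eta R^n$ domains have boundary volume at most some fixed $Q$; only then can $b$ be chosen so small that $O_{\beta,M}(b)\,Q<\xi$. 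This is a second source of ``bad'' domains, separate from the near-critical instability you do discuss, and it must be excised explicitly.

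Two smaller points. First, Lemma~\ref{lem:vol pert} requires $C^2$-bounds on both $g$ and $h$ (the constants in the conclusion depend on $M$); your argument invokes the lemma but never establishes the $C^2$-norm control, which in the paper is handled by excising $\Delta_2$ and $\Delta_3$ via Lemma~\ref{lem:Delta2-3 small}. Second, your mechanism for bounding the number of domains whose boundary meets a near-critical point is sketchier than what is needed: the paper instead excludes outright the \emph{global} ``unstable event'' $\Delta_4=\{\min_{B(2R)}\max(|\gfr|,|\nabla\gfr|)\le 2\beta\}$, whose probability is small for small $\beta$ by Lemma~\ref{lem:Delta4 small}; outside $\Delta_4$ the hypothesis of Lemma~\ref{lem:vol pert} holds everywhere, so no per-domain counting of near-critical points is needed. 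Your proposed counting via critical points of $\gfr_{n,\alpha}$ and of its restriction to a few spheres would not obviously bound the number of ``unstable'' nodal components (near-critical points form a thickened set and are not the same as critical points), and the paper avoids this route entirely.
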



\begin{proof}[Proof of Theorem \ref{thm:loc scal} assuming Proposition \ref{prop:bi(R-1,Fx)<=bi(R/L,fL)<=bi(R+1,Fx)}.]

Let $x\in\M$, $\epsilon>0$ be fixed, and $\xi,\eta>0$ be given small numbers.
We observe that by the definition \eqref{eq:fxL def} of the scaled fields $f_{x;T}$ and
\eqref{eq:|vol(D)-vol(Phix(d))|<eps}, for $T$ sufficiently big (depending on $R$) we have
\begin{equation}
\label{eq:nod fxL<=nod fL<=nodfxL}
\nod \left(f_{x;T},t-\xi;R\right) \le \nod \left(f,\frac{t}{T^{n}};x,\frac{R}{T}\right) \le \nod
\left(f_{x;T},t+\xi;R\right)
\end{equation}
Hence Proposition \ref{prop:bi(R-1,Fx)<=bi(R/L,fL)<=bi(R+1,Fx)} together with \eqref{eq:nod fxL<=nod fL<=nodfxL}
holding for $T$ sufficiently big (depending on $R$) imply that for $T$ sufficiently big, outside an event of arbitrarily small probability,
\begin{equation*}
\begin{split}
\nod(\gfr_{n,\alpha},t-2\xi;R-1)-\eta\cdot R^{n} &\le
\nod  \left(f,\frac{t}{T^{n}};x,\frac{R}{T}\right) \\&\le \nod  (\gfr_{n,\alpha},t+2\xi;R+1) + \eta \cdot R^{n}.
\end{split}
\end{equation*}
Choose $$\eta<\frac{\epsilon \cdot \vol B(1)}{2}$$ so that
\begin{equation}
\begin{split}
\label{eq:pr(n-c)<pr(n-c)+pr(n-c)}
&\prob\left\{ \left|\frac{\nod  \left(f,\frac{t}{T^{n}};x,\frac{R}{T}\right)}{c(n,\alpha)\cdot\vol B(R)} -
\Psi(t)\right|>\epsilon\right\} \\&\le
\prob\left\{\left|\frac{\nod  (\gfr_{n,\alpha},t+2\xi;R+1)}{c(n,\alpha)\cdot\vol B(R)} -  \Psi(t) \right| > \epsilon - \eta \right\}
\\&+ \prob\left\{\left|\frac{\nod  (\gfr_{n,\alpha},t-2\xi;R-1)}{c(n,\alpha)\cdot\vol B(R)} -
\Psi(t) \right| > \epsilon -\eta \right\}.
\end{split}
\end{equation}

Now, bearing in mind that $\epsilon-\eta > \frac{\epsilon}{2},$
\begin{equation*}
\begin{split}
&\left\{\left|\frac{\nod  (\gfr_{n,\alpha},t+2\xi;R+1)}{c(n,\alpha)\cdot\vol B(R)} -
\Psi(t) \right| > \epsilon-\eta \right\} \subseteq
\\&\left\{\left|\frac{\nod  (\gfr_{n,\alpha},t+2\xi;R+1)}{c(n,\alpha)\cdot\vol B(R)} -  \Psi(t+2\xi) \right| >
\frac{\epsilon}{2} - (\Psi(t+2\xi)-\Psi(t)) \right\},
\end{split}
\end{equation*}
and thus
\begin{equation}
\label{eq:nod(n-c+)->0}
\begin{split}
&\prob\left\{\left|\frac{\nod  (\gfr_{n,\alpha},t+2\xi;R+1)}{c(n,\alpha)\cdot\vol B(R)} -  \Psi(t)\right| > \epsilon-\eta \right\}
\\&\le \prob\left\{\left|\frac{\nod  (\gfr_{n,\alpha},t+2\xi;R+1)}{\vol B(R)} -  \Psi(t+2\xi) \right| >
\frac{\epsilon}{2} - (\Psi(t+2\xi)-\Psi(t)) \right\}\rightarrow 0
\end{split}
\end{equation}
as $R\rightarrow \infty$ by Theorem \ref{thm:scal invar exist}, and the continuity of $\Psi(\cdot)$ at $t$.
Similarly, as $R\rightarrow \infty$,
\begin{equation}
\label{eq:nod(n-c-)->0}
\prob\left\{\left|\frac{\nod  (\gfr_{n,\alpha},t-2\xi;R-1)}{c(n,\alpha)\vol B(R)} -  \Psi(t) \right| > \epsilon-\eta \right\} \rightarrow 0.
\end{equation}
The above proves that for each $R>0$ sufficiently large there exists
$T_{0}=T_{0}(R)$, such that \eqref{eq:pr(n-c)<pr(n-c)+pr(n-c)} holds for $T>T_{0}$; the latter
could be made arbitrarily small by \eqref{eq:nod(n-c+)->0} and \eqref{eq:nod(n-c-)->0}.
This is precisely the statement of Theorem \ref{thm:loc scal}.

\end{proof}

\subsection{Some preparatory results towards the proof of Proposition \ref{prop:bi(R-1,Fx)<=bi(R/L,fL)<=bi(R+1,Fx)}}

\label{sec:bi(R-1,Fx)<=bi(R/L,fL)<=bi(R+1,Fx) prep}

In course of the proof of Proposition \ref{prop:bi(R-1,Fx)<=bi(R/L,fL)<=bi(R+1,Fx)} we will need to exclude
some exceptional events. Let $\delta>0$ be a small parameter that will control the
probabilities
of the discarded events, $b,\beta>0$ be small parameters that will control the quality of the various approximations,
$\eta>0$ will control the number of discarded domains, and $M,Q>0$ large parameters. Given $R$ and $T$ big we define
\begin{equation*}
\Delta_{1}=\Delta_{1}(R,T;b)= \{ \| f_{x;T}-\gfr_{n,\alpha}\|_{C^{1}(\overline{B}(2R))} \ge b\},
\end{equation*}
\begin{equation*}
\Delta_{2}=\Delta_{2}(R,T;M) = \left\{ \| f_{x,T}\|_{C^{2}(\overline{B}(2R))} \ge M  \right\} ,
\end{equation*}
\begin{equation*}
\Delta_{3}=\Delta_{3}(R;M) = \left\{ \| \gfr_{n,\alpha}\|_{C^{2}(\overline{B}(2R))} \ge M  \right\}
\end{equation*}
and the ``unstable event"
\begin{equation*}
\Delta_{4}(R;\beta) = \left\{ \min\limits_{u\in \overline{B}(2R)}\max \{|\gfr_{n,\alpha}(u)|, |\nabla \gfr_{n,\alpha}(u)|\} \le 2\beta  \right\}.
\end{equation*}
Moreover let $\Delta_{5}$ be the event
\begin{equation*}
\begin{split}
&\Delta_{5}(R;\eta,Q) =
\\&\left\{ \left|\left\{\text{components } \omega \in \Omega(\gfr_{n,\alpha}):\:\omega\subseteq B(2R),\, \vol_{n-1}(\partial\omega)>Q   \right\}\right| >\eta R^{n}\right\}
\end{split}
\end{equation*}
that there is a significant number of nodal domains of $\gfr_{n,\alpha}$ entirely lying in $B(2R)$ whose boundary volume is at least $Q$.
The boundary of such a domain is comprised of a number of nodal components; below we will argue that,
with high probability, each of the boundary components is of bounded volume and their total number is bounded
(see the proof of Lemma \ref{lem:Delta 5 small}).

The following is a simple corollary from Lemma \ref{lem:E[|fxL-Fx|<alpha]}.
\begin{lemma}
\label{lem:Delta1 small}
For a given sufficiently big $R>0$ and small $b,\delta>0$ there exists $T_{0}=T_{0}(R,b,\delta)$ so
that for $T>T_{0}$ the probability of $\Delta_{1}$
\begin{equation}
\label{eq:Delta1 small}
\prob(\Delta_{1}(R,T;b))<\delta
\end{equation}
is arbitrarily small.
\end{lemma}

The following lemma yields a bound on the probabilities $\prob(\Delta_{2})$
and $\prob(\Delta_{3})$.

\begin{lemma}
\label{lem:Delta2-3 small}

\begin{enumerate}
\item
For every $R,\delta>0$ there exists $M=M(R,\delta)>0$ so that
\begin{equation}
\label{eq:Delta3 small}
\prob(\Delta_{3}(R;M)) < \delta.
\end{equation}

\item
For $R,\delta>0$ there exist $M(R,\delta)>0$ and $T_{0}=T_{0}(R)$ so that
\begin{equation}
\label{eq:Delta2 small}
\prob(\Delta_{2}(R,T;M))< \delta
\end{equation}
for all $T>T_{0}$.

\end{enumerate}

\end{lemma}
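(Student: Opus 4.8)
The plan is to treat both parts as standard tail bounds for the $C^{2}$-norm of a smooth Gaussian field over the fixed compact set $\overline{B}(2R)$, the only genuine point being the uniformity in $T$ needed for part (2).

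For part (1) I would first observe that the spectral measure of $\gfr_{n,\alpha}$ is compactly supported (the normalised Lebesgue measure on $A_{\alpha}$, resp. the surface measure on $A_{1}$), so $r_{n,\alpha}(u,v)=B_{n,\alpha}(|u-v|)$ is real-analytic and $\gfr_{n,\alpha}$ is a.s. $C^{\infty}$; in particular $\|\gfr_{n,\alpha}\|_{C^{2}(\overline{B}(2R))}$ is an a.s.\ finite random variable, so $\prob(\Delta_{3}(R;M))\to 0$ as $M\to\infty$ and one simply picks $M=M(R,\delta)$. If a quantitative bound is desired, I would apply the Borell--TIS inequality to the $\R^{N}$-valued Gaussian field $(\partial^{\beta}\gfr_{n,\alpha})_{|\beta|\le 2}$ (with $N$ the number of such multi-indices), getting $\prob(\Delta_{3})\le\exp(-(M-m)^{2}/(2\sigma^{2}))$ with $m=\E\|\gfr_{n,\alpha}\|_{C^{2}(\overline{B}(2R))}$ and $\sigma^{2}$ the largest coordinate variance; but a.s.\ finiteness already suffices.

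For part (2) the key input is that \eqref{eq:Kalpha asymp} may be differentiated for any fixed finite number of derivatives, so for $|\beta|,|\gamma|\le 3$ one has $\partial^{\beta}_{u}\partial^{\gamma}_{v}K_{x;T}(u,v)=\partial^{\beta}_{u}\partial^{\gamma}_{v}r_{n,\alpha}(u,v)+O_{R}(T^{-1})$ uniformly on $\overline{B}(2R)\times\overline{B}(2R)$. I would introduce the $\R^{N}$-valued field $G_{x,T}(u)=(\partial^{\beta}f_{x,T}(u))_{|\beta|\le 2}$; its covariance, being a finite list of the above quantities, is for $T>T_{0}(R)$ uniformly bounded and uniformly Lipschitz, which yields a uniform variance bound $\sup_{u\in\overline{B}(2R)}\E\,|G_{x,T}(u)|^{2}\le\sigma^{2}(R)$ and a uniform increment bound $\E\,|G_{x,T}(u)-G_{x,T}(v)|^{2}\le L(R)\,|u-v|^{2}$. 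Dudley's entropy bound (the canonical metric of each coordinate process is dominated by $\sqrt{L(R)}$ times the Euclidean one on the bounded set $\overline{B}(2R)$) then gives $\E\|f_{x,T}\|_{C^{2}(\overline{B}(2R))}=\E\sup_{\overline{B}(2R)}|G_{x,T}|\le m(R)$ uniformly in $T>T_{0}(R)$, and Borell--TIS upgrades this to $\prob(\|f_{x,T}\|_{C^{2}(\overline{B}(2R))}\ge m(R)+s)\le\exp(-s^{2}/(2\sigma^{2}(R)))$ for all $T>T_{0}(R)$; choosing $M=M(R,\delta)$ with $\exp(-(M-m(R))^{2}/(2\sigma^{2}(R)))<\delta$ (enlarging $T_{0}(R)$ if needed) completes the argument.

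The hard part here is essentially bookkeeping rather than anything substantive: one must make sure the comparison of covariances is uniform in $T$ and valid for enough derivatives (controlling the modulus of continuity of the \emph{second}-derivative field needs the kernel asymptotics up to order roughly six), which is harmless because $B_{n,\alpha}$ is analytic and \eqref{eq:Kalpha asymp} persists, with the same $O(T^{-1})$ error, for any fixed finite number of derivatives. Alternatively, should a $C^{2}$- (rather than $C^{1}$-) version of the coupling in Lemma \ref{lem:E[|fxL-Fx|<alpha]} be invoked, part (2) would follow immediately from part (1) by writing $f_{x,T}=\gfr_{n,\alpha}+(\text{term small in }C^{2})$ and absorbing the small term into $M$.
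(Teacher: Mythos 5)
Your proposal is correct, and it is in the same spirit as the paper's proof—reduce to a uniform-in-$T$ bound on $\E\|f_{x;T}\|_{C^{2}(\overline{B}(2R))}$ and then apply a tail estimate—but the specific tools differ. For part (1) the paper takes $M=\delta^{-1}\E\|\gfr_{n,\alpha}\|_{C^{2}(\overline{B}(2R))}$ (finite by Adler--Taylor, Theorem 2.1.1) and applies Markov's inequality; your observation that a.s.\ finiteness already gives the qualitative statement, with Borell--TIS as the quantitative upgrade, is fine and slightly stronger than needed. For part (2) the paper does not re-derive a uniform $\E\sup$ bound from scratch: it invokes the Sudakov--Fernique comparison inequality (Adler--Taylor, Theorem 2.2.3) together with the differentiated version of \eqref{eq:Kalpha asymp} to conclude that for any $M_{1}>\E\|\gfr_{n,\alpha}\|_{C^{2}(\overline{B}(2R))}$ one has $\E\|f_{x;T}\|_{C^{2}(\overline{B}(2R))}<M_{1}$ for $T$ large, after which Markov again gives $M=\delta^{-1}M_{1}$. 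You instead run Dudley's entropy integral on the $\R^{N}$-valued derivative field to get the uniform $\E\sup$ bound directly, then apply Borell--TIS. Both routes rest on the same essential input—uniform control of the covariance and its derivatives via \eqref{eq:Kalpha asymp}—but the paper's comparison argument is lighter, since it piggybacks on the finiteness of $\E\|\gfr_{n,\alpha}\|_{C^{2}}$ rather than rebuilding the entropy estimate. Your remark that a $C^{2}$ strengthening of the coupling in Lemma \ref{lem:E[|fxL-Fx|<alpha]} would make part (2) an immediate corollary of part (1) is also correct and is the cleanest conceptual route, though the paper deliberately avoids relying on more than the $C^{1}$ coupling that \cite{So12} provides.
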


\begin{proof}

For \eqref{eq:Delta3 small} we may choose $M$ to be
$$
M=\delta^{-1}\E [\| \gfr_{n,\alpha}\|_{C^{2}(\overline{B}(2R))} ],
$$
which is finite by \cite[Theorem $2.1.1$]{Adler-Taylor}. The estimate \eqref{eq:Delta3 small} with this value of $M$
follows from Chebyshev's inequality.

In order to establish \eqref{eq:Delta2 small} we observe that
by \cite{Adler-Taylor}, Theorem $2.2.3$ (``Sudakov-Fernique comparison inequality")
and \eqref{eq:Kalpha asymp}
applied to both $K_{x,T}$ and its derivatives
for all $$M_{1}>\E [\| \gfr_{n,\alpha}\|_{C^{2}(\overline{B}(2R))} ] $$ there exists $T_{0}=T_{0}(R,M_{1})$ such that for all $T>T_{0}$
$$\E \left[ \| f_{x,T}\|_{C^{2}(\overline{B}(2R))}\right] < M_{1}.$$ Hence
\eqref{eq:Delta2 small} with $M=\delta^{-1}M_{1}$ follows from Chebyshev's inequality as before.

\end{proof}

The event $\Delta_{4}$ means that the nodal set of $\gfr_{n,\alpha}$ is relatively unstable. For small $\beta$ the probability  of this event can be made arbitrary small. This is Lemma 7 from \cite{NaSo15}:

\begin{lemma}[Cf. \cite{NaSo15}]
\label{lem:Delta4 small}
For $R>0$  and $\delta>0$ there exist $\beta>0$ such that
\begin{equation}
\label{eq:Delta4 small}
\prob\left(\Delta_{4}(R,\beta) \right)< \delta.
\end{equation}
\end{lemma}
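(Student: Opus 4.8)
The statement to prove is Lemma~\ref{lem:Delta4 small}, which asserts that the ``unstable event'' $\Delta_4(R,\beta)$ has arbitrarily small probability for $\beta$ small. The plan is to reduce this to a pointwise statement about the joint distribution of $(\gfr_{n,\alpha}(u),\nabla\gfr_{n,\alpha}(u))$ together with a compactness/covering argument on $\overline{B}(2R)$. First I would observe that the random vector $(\gfr_{n,\alpha}(u),\nabla\gfr_{n,\alpha}(u))\in\R^{n+1}$ is, for each fixed $u$, a non-degenerate Gaussian: non-degeneracy follows from axiom $(\rho 3)$ (the spectral support of $\gfr_{n,\alpha}$, namely $A_\alpha$ or the unit sphere for $\alpha=1$, does not lie in a hyperplane), exactly as recorded in Section~\ref{subsec: Kac-Rice}. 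By stationarity the law of this vector does not depend on $u$, so there is a single constant $C=C(n,\alpha)$ bounding the density of $(\gfr_{n,\alpha}(u),\nabla\gfr_{n,\alpha}(u))$ uniformly, and hence
\begin{equation*}
\prob\left\{\max\{|\gfr_{n,\alpha}(u)|,|\nabla\gfr_{n,\alpha}(u)|\}\le 2\beta\right\}\le C\cdot\vol(B_{\R^{n+1}}(2\beta))= O(\beta^{n+1})
\end{equation*}
for every fixed $u$, with the constant uniform in $u$.

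Next I would pass from a single point to the whole ball $\overline{B}(2R)$. A naive union bound over an infinite set does not work, so the standard trick (this is precisely the mechanism of Lemma~7 in \cite{NaSo15}) is to combine the pointwise small-ball estimate with an a~priori bound on the $C^2$-norm of $\gfr_{n,\alpha}$. Concretely: fix a fine grid of points $\{u_j\}$ in $\overline{B}(2R)$ with spacing $h$, of cardinality $O((R/h)^n)$. If $\|\gfr_{n,\alpha}\|_{C^2(\overline{B}(2R))}\le M$, then $\gfr_{n,\alpha}$ and $\nabla\gfr_{n,\alpha}$ are $M$-Lipschitz on $\overline{B}(2R)$, so the occurrence of $\min_{u}\max\{|\gfr_{n,\alpha}(u)|,|\nabla\gfr_{n,\alpha}(u)|\}\le 2\beta$ forces $\max\{|\gfr_{n,\alpha}(u_j)|,|\nabla\gfr_{n,\alpha}(u_j)|\}\le 2\beta+Mh$ at the nearest grid point $u_j$. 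Choosing $h\asymp\beta/M$ and using Lemma~\ref{lem:Delta2-3 small}(1) to first exclude, with probability $<\delta/2$, the event $\{\|\gfr_{n,\alpha}\|_{C^2(\overline{B}(2R))}>M\}$ for a suitable $M=M(R,\delta)$, we get
\begin{equation*}
\prob(\Delta_4(R;\beta))\le \frac{\delta}{2}+\sum_{j}\prob\left\{\max\{|\gfr_{n,\alpha}(u_j)|,|\nabla\gfr_{n,\alpha}(u_j)|\}\le 3\beta\right\}\le \frac{\delta}{2}+O\!\left(\Big(\frac{RM}{\beta}\Big)^{\!n}\beta^{n+1}\right)= \frac{\delta}{2}+O(R^nM^n\beta),
\end{equation*}
and the last term is $<\delta/2$ once $\beta$ is small enough (depending on $R$ and $M=M(R,\delta)$, hence on $R$ and $\delta$). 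Alternatively, if one prefers to avoid discretization, the same conclusion follows from the Kac--Rice bound of Lemma~\ref{lem:Kac Rice crit ball sphere}: the expected number of critical points of $\gfr_{n,\alpha}$ in $\overline{B}(2R)$ is $O(R^n)$, and conditioning on a crude upper bound for this number reduces the measure of $\Delta_4$ to finitely many pointwise estimates.

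The main obstacle, and the only genuinely quantitative point, is getting the uniform (in $u$) upper bound on the joint Gaussian density of $(\gfr_{n,\alpha}(u),\nabla\gfr_{n,\alpha}(u))$ with an explicit dependence on $\beta$ of the right order $\beta^{n+1}$ — this is what makes the grid sum beat the grid cardinality. Stationarity makes the density $u$-independent, and $(\rho 3)$ guarantees its finiteness, so this is really just bookkeeping; the bulk of the argument is the routine Lipschitz/covering step. Since the statement is quoted as ``Cf.~\cite{NaSo15}, Lemma~7'', I would simply reproduce this short argument and cite Nazarov--Sodin for the details.
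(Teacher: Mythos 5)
Your main argument is correct, and is the standard way this anti-concentration lemma is proved. Note that the paper itself gives no proof: it simply cites Nazarov--Sodin's Lemma~7, so there is no in-paper proof to compare against, but your covering argument is essentially a reconstruction of that argument. The mechanism is exactly right: the pointwise small-ball bound of order $\beta^{n+1}$ (using that $(\gfr_{n,\alpha}(u),\nabla\gfr_{n,\alpha}(u))\in\R^{n+1}$ has a bounded density, uniformly in $u$ by stationarity), combined with the $C^2$-norm truncation of Lemma~\ref{lem:Delta2-3 small}(1) and a grid of mesh $h\asymp\beta/M$, gives $\prob(\Delta_4)\le\delta/2+O\bigl((RM/\beta)^n\beta^{n+1}\bigr)=\delta/2+O(R^nM^n\beta)$, which is $<\delta$ for $\beta$ small. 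One point worth making explicit: stationarity forces $\E[\gfr_{n,\alpha}(u)\,\partial_j\gfr_{n,\alpha}(u)]=0$, so the joint covariance matrix is block-diagonal, and its non-degeneracy therefore follows from the normalization $\E[\gfr_{n,\alpha}^2]=1$ together with $(\rho 3)$; this is the justification for the uniform density bound. Your parenthetical ``alternative via Kac--Rice'' does not work as stated: a bound on the number of \emph{exact} critical points does not control the measure of the set where $|\nabla\gfr_{n,\alpha}|$ is merely small, nor does it say anything about $|\gfr_{n,\alpha}|$ being small there, so there is no reduction to finitely many pointwise estimates. The covering argument is the one to keep.
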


Finally, for $\Delta_{5}$ we have the following bound:

\begin{lemma}
\label{lem:Delta 5 small}
For every $\eta>0$ and $\delta>0$ there exist $Q>0$ and $R_{0}=R_{0}(\eta,\delta$) such that for all $R>R_{0}$
the probability of $\Delta_{5}$ is
\begin{equation}
\label{eq:P(Delta)<delta}
\Delta_{5}(R;\eta,Q) < \delta.
\end{equation}
\end{lemma}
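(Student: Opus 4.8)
The plan is to bound the expected number of nodal domains of $\gfr_{n,\alpha}$ lying in $B(2R)$ whose boundary volume exceeds $Q$, and then conclude by Markov's inequality. The key point is that the boundary $\partial\omega$ of a nodal domain $\omega$ is a union of nodal components (connected components of $\gfr_{n,\alpha}^{-1}(0)$), so a domain with large boundary volume either contains a single nodal component of large $(n-1)$-volume, or contains many nodal components. Both of these are rare events in an averaged sense, and I would control each by a Kac--Rice type estimate from \S\ref{subsec: Kac-Rice}.

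First I would set up the count. Write $\mathcal{B}(R;Q)$ for the set of components $\omega\in\Omega(\gfr_{n,\alpha})$ with $\omega\subseteq B(2R)$ and $\vol_{n-1}(\partial\omega)>Q$. For each such $\omega$, either some connected component $\gamma$ of $\partial\omega$ has $\vol_{n-1}(\gamma)>Q^{1/2}$, or $\partial\omega$ consists of at least $Q^{1/2}$ distinct nodal components. In the first case, $\gamma$ is a nodal component of $\gfr_{n,\alpha}$ inside $B(2R)$ of surface volume at least $Q^{1/2}$; the expected $(n-1)$-volume of the whole nodal set $\gfr_{n,\alpha}^{-1}(0)$ inside $B(2R)$ is $O(R^n)$ by the Kac--Rice formula (Lemma \ref{lem:Kac-Rice precise} applied to $F=\gfr_{n,\alpha}$, using non-degeneracy of its values, which holds since the spectral measure of $\gfr_{n,\alpha}$ is not supported in a hyperplane), so the expected number of nodal components of surface volume exceeding $Q^{1/2}$ is $O(R^n/Q^{1/2})$, hence so is the expected number of domains $\omega$ they can bound. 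In the second case, each such $\omega$ is associated with at least $Q^{1/2}$ nodal components; but the total number of nodal components of $\gfr_{n,\alpha}$ inside $B(2R)$ is $O(R^n)$ in expectation (this is the Nazarov--Sodin bound, or equivalently Lemma \ref{lem:Kac Rice crit ball sphere}(1) bounding critical points, since the number of nodal components is dominated by the number of critical points), and each component is counted for at most two domains it bounds, so the expected number of domains in this case is $O(R^n/Q^{1/2})$ as well.

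Combining, $\E\left[|\mathcal{B}(R;Q)|\right] = O(R^n/Q^{1/2})$ with the implied constant depending only on $n$ and $\alpha$. Then
\begin{equation*}
\prob(\Delta_5(R;\eta,Q)) = \prob\left\{|\mathcal{B}(R;Q)|>\eta R^n\right\} \le \frac{\E[|\mathcal{B}(R;Q)|]}{\eta R^n} = O\!\left(\frac{1}{\eta Q^{1/2}}\right),
\end{equation*}
uniformly in $R$ (for $R$ bounded below, say $R>1$). Choosing $Q$ large enough, depending on $\eta$ and $\delta$, makes this less than $\delta$, which is \eqref{eq:P(Delta)<delta}; here $R_0(\eta,\delta)$ can simply be taken to be any constant that guarantees the Kac--Rice estimates are in their asymptotic regime.

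The main obstacle I anticipate is the bookkeeping in the first step: one must be careful that a nodal component can serve as (part of) the boundary of at most a bounded number — in fact at most two — of nodal domains, so that summing $\vol_{n-1}$ over boundaries does not overcount the nodal set, and one must handle the outer-versus-inner boundary distinction consistently with the convention $\mathcal{A}(\gamma)$ from Lemma \ref{lem:int geom sandwitch euclidian}. Once that combinatorial relationship between domains and their bounding components is pinned down, the rest is a routine application of the already-quoted Kac--Rice upper bounds. A minor technical point is to confirm that $\gfr_{n,\alpha}$ satisfies the non-degeneracy hypotheses of Lemma \ref{lem:Kac-Rice precise} and Lemma \ref{lem:Kac Rice crit ball sphere}, which follows from its explicit spectral measure being the normalized measure on $A_\alpha$ (or on the unit sphere when $\alpha=1$), whose support spans $\R^n$.
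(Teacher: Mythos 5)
Your proof is correct and follows the same basic decomposition as the paper (a domain with boundary volume $>Q$ either has a single long boundary component, or has many boundary components), but the execution is streamlined in a nice way: you bound $\E[|\mathcal{B}(R;Q)|]$ by $O(R^n/Q^{1/2})$ directly and then apply Markov once, whereas the paper introduces a cascade of exceptional events $\Delta_6,\Delta_7$ (bounding the total number of domains by $AR^n$ via Nazarov--Sodin, then working deterministically on the complement) with a two-parameter threshold $L$ and $Q/L$ in place of your single $Q^{1/2}$. The one genuinely different ingredient is how the two proofs control the number of domains with many boundary components: the paper invokes the \emph{nesting graph} $G=(V,E)$ of \cite{SW} --- an acyclic graph whose vertices are the domains in $B(2R)$ and whose edges are shared boundary components --- and uses $\sum_\omega d(\omega)=2|E|\le 2(|V|-1)$ together with the Nazarov--Sodin bound on $|V|$; you instead bound the total number of nodal \emph{components} in $B(2R)$ by the number of critical points (via Lemma~\ref{lem:Kac Rice crit ball sphere}(1)) and note each component bounds at most two domains. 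Both give the required $O(R^n)$ control on $\sum_\omega(\#\text{boundary components of }\omega)$, but the nesting-graph route sidesteps the need to justify that nodal components are dominated by critical points (true, but a separate fact: each component is the outer boundary of exactly one bounded domain containing an interior extremum), and it records the useful combinatorial fact that the average boundary degree is $<2$. Your anticipated ``obstacle'' about at-most-two incidence counting is exactly what the nesting graph formalizes, and your handling of it is sound.
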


\begin{proof}

In what follows we argue that most of the domains have a bounded number of components
lying in the boundary, and with high probability, each has bounded $n-1$-volume. First, using Nazarov-Sodin's
\eqref{eq:tot numb asymp NS}, there exists a number $A$ sufficiently big so that the probability that
the total number of nodal domains of $\gfr_{n,\alpha}$ entirely lying inside $B(2R)$ is bigger than $A\cdot R^{n}$ is
\begin{equation}
\label{eq:DeltaA<delta/2}
\prob(\Delta_{6}(A)):=\prob\left\{ \nod (\gfr_{n,\alpha};2R) > A\cdot R^{n} \right\}<\delta/2,
\end{equation}
so we may exclude this unlikely event $\Delta_{6}=\Delta_{6}(A)$.

Now we are going to show that the number of boundary components of most of the nodal domains of $\gfr_{n,\alpha}$
lying in $B(2R)$ is bounded; outside of $\Delta_{6}$ there are less than $\eta\cdot R^{n}$. To this end
we use the {\em nesting graph} introduced in ~\cite{SW}. Let $G=(V,E)$ be the graph with the set of vertexes $V$
being the collection of all nodal domains $$V=\{\omega\in\Omega(\gfr_{n,\alpha}):\: \omega\subseteq B(2R)\}$$ of $\gfr_{n,\alpha}$ lying
entirely in $B(2R)$, and an edge $e\in E$ connects between two domains in $V$, if they have a common boundary component.
The graph $G$ is a subgraph of the nesting {\em tree}, same graph with no restriction of the domains
to be contained in $B(2R)$; though $G$ may fail to be a tree, it has no cycles ~\cite[Section 2.4]{SW}; a degree $d(\omega)$
of a vertex $\omega\in V$ in $V$ precisely equals to the number of nodal components lying the boundary of $\omega$
we are to bound. We then have
\begin{equation*}
\sum\limits_{\omega\in V}d(\omega) = 2|E|\le 2(|V|-1) < 2 \nod (\gfr_{n,\alpha};2R),
\end{equation*}
equivalently
\begin{equation*}
\frac{1}{\nod (\gfr_{n,\alpha};2R)}\sum\limits_{\omega\in V}d(\omega)< 2.
\end{equation*}
Hence, by Chebyshev's inequality, outside $\Delta_{6}(A)$ as in \eqref{eq:DeltaA<delta/2},
the number of those $\omega\in V$ with $d(\omega)>L$ is at most
\begin{equation}
\label{eq:num dom > L comp small}
|\{\omega\in V:\: d(\omega)>L\} | \le \frac{2}{L}\cdot \nod (g;2R) \le \frac{2}{L} \cdot AR^{n},
\end{equation}
and below we will choose $L$ sufficiently big so that
\begin{equation}
\label{eq:2A/L < eta/2}
\frac{2A}{L} < \frac{\eta}{2}.
\end{equation}

Next we show that, with high probability, the $(n-1)$-volume of most of the components is bounded.
Let $\Zc_{\gfr_{n,\alpha}}(2R)$ be the nodal volume of $\gfr_{n,\alpha}$ inside $B(2R)$
\begin{equation}
\label{eq:nod vol sum comp}
\Zc_{\gfr_{n,\alpha}}(2R) = \vol_{n-1}(\gfr_{n,\alpha}^{-1}(0) \cap B(2R)) = \sum\limits_{\gamma\in E}\vol_{n-1}(e).
\end{equation}
Then, by a standard application of  Lemma \ref{lem:Kac-Rice precise}, and the stationarity of $\gfr_{n,\alpha}$
the expectation of the nodal volume is given by
\begin{equation}
\label{eq:exp nod vol Kac-Rice}
\E[\Zc_{\gfr_{n,\alpha}}(2R)] = c_{0}\cdot R^{n},
\end{equation}
where $c_{0}=c_{0}(\gfr_{n,\alpha}) >0$ is a positive constant, that could be evaluated explicitly in terms of $n$ and $\alpha$.
Hence, by \eqref{eq:nod vol sum comp} and \eqref{eq:exp nod vol Kac-Rice}, and Chebyshev's inequality, outside of event of probability $\delta/2$
the number of those components with large $(n-1)$-volume is bounded:
\begin{equation}
\label{eq:numb comp big length rare}
\prob(\Delta_{7}(Q/L)):=
\prob\left\{ \left| \left\{\gamma\in E:\: \vol_{n-1}(\gamma) > \frac{Q}{L}\right\}\right|> \frac{\eta}{4}\cdot R^{n}  \right\} < \frac{\delta}{2}
\end{equation}
provided that $Q/L$ is sufficiently big.
Since each component is lying in the boundary of at most two domains $\omega\in V$, it follows that
outside of $\Delta_{7}$ for all but at most $\frac{\eta}{2}\cdot R^{n}$ domains $\omega\in V$, {\em for all}
components $\gamma\in E$ lying in the boundary of $\omega$ we have
\begin{equation}
\vol_{n-1}(\gamma) \le \frac{Q}{L}.
\end{equation}

Now, given $\delta>0$ we choose $A>0$ sufficiently big so that \eqref{eq:DeltaA<delta/2} is satisfied.
This forces a choice of $L$ via \eqref{eq:2A/L < eta/2},
so that the r.h.s. of \eqref{eq:num dom > L comp small} is $<\frac{\eta}{2}\cdot R^{n}$,
and then we take $Q$ sufficiently big so that \eqref{eq:numb comp big length rare} is satisfied
and
\begin{equation*}
\prob(\Delta_{6}\cup\Delta_{7})\le \prob(\Delta_{6})+\prob(\Delta_{7})< \delta.
\end{equation*}
From the above, outside of $\Delta_{6}\cup\Delta_{7}$ for all but $\eta\cdot R^{n}$ nodal domains $\omega\in V$ we have
that the number of boundary components of $\omega$ is $<L$, and the $(n-1)$-volume of {\em each one} of them is bounded
by $\frac{Q}{L}$, and hence the $(n-1)$-volume of $\partial \omega$ is $<Q$ as claimed by this lemma.

\end{proof}

\subsection{Proof of Proposition \ref{prop:bi(R-1,Fx)<=bi(R/L,fL)<=bi(R+1,Fx)}}

\label{sec:bi(R-1,Fx)<=bi(R/L,fL)<=bi(R+1,Fx)}


To prove Proposition \ref{prop:bi(R-1,Fx)<=bi(R/L,fL)<=bi(R+1,Fx)} we need the following Lemma which is the uniform version of an obvious statement that the
volume of a nodal domain depends continuously on the function as long as $0$ is not a critical value.

\begin{lemma}[Cf. ~\cite{So12}, lemmas $6-7$]
\label{lem:vol pert}

Let $R>0$ and $g$ and $h$ be two (deterministic) $C^2$-smooth functions on $B(2R)\subset\R^{n+1}$. We assume that:
\begin{enumerate}
\item For some $\beta>0$ we have
$$\min\limits_{\overline{B}(2R)}\max\{ |g|,|\nabla g | \} > \beta.$$

\item For some $M>0$ the $C^{2}$-norms of both $g$ and $h$ are bounded $$\|g \|_{C^{2}(B(2R))}, \|h \|_{C^{2}(B(2R))} < M.$$

\item We have $$\|g-h\|_{C^{1}(B(R))}<b$$ for some
$b>0$.
\end{enumerate}

Then if $b$ is sufficiently small (depending on $\beta$ and $M$ only)
there exists an injective map $\gamma\mapsto \gamma^{h}$ between connected components
$\gamma\subseteq B(R-1)$ of $g^{-1}(0)$ and connected components $\gamma^{h}\subseteq B(R)$ of $h^{-1}(0)$
with the following properties:

\begin{enumerate}

\item For every $\gamma$ as above the components $\gamma$ and $\gamma^{h}$ are ``uniformly close": there exists a smooth
bijective map $\psi_{\gamma}:\gamma\rightarrow \gamma^{h}$ so that for all $x\in \gamma$ we have
\begin{equation}
\label{eq:psi(gamma)-gamma small}
\|\psi_{\gamma}(x)-x\|_{\infty} = O_{\beta,M}(b)
\end{equation}
with constants involved in the `$O$`-notation depending on $\beta$ and $M$ only.

\item Let $\Gc_{\gamma,\gamma^{h}}$ be the region enclosed between $\gamma$ and $\gamma^{h}$. Its (signed or not) volume satisfies
\begin{equation}
\label{eq:vol pert dom}
|\vol_{n}(\Gc_{\gamma,\gamma^{h}})|=\vol_{n-1}(\gamma)\cdot O_{\beta,M}(b).
\end{equation}


\end{enumerate}

\end{lemma}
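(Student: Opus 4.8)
The plan is to construct the bijection $\gamma \mapsto \gamma^h$ by following the nodal set of $g$ under a small perturbation, using the implicit function theorem locally and then globalizing. First I would fix a component $\gamma \subseteq B(R-1)$ of $g^{-1}(0)$. By hypothesis (1), at every point $x \in \gamma$ we have $g(x) = 0$, so $|\nabla g(x)| > \beta$, i.e.\ $0$ is a regular value of $g$ on a neighbourhood of $\gamma$; combined with the $C^2$-bound (2), the nodal set $g^{-1}(0)$ is a $C^2$-hypersurface near $\gamma$ with curvature controlled by $M/\beta$, and there is a tubular neighbourhood of $\gamma$ of width $\gtrsim_{\beta,M} 1$ on which $|\nabla g| > \beta/2$. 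On this tube I would define $\psi_\gamma(x)$ by flowing from $x \in \gamma$ along the line $x + s\,\nabla g(x)/|\nabla g(x)|$ until $h$ vanishes; since $\|g-h\|_{C^1(B(R))} < b$ and $|\nabla g| > \beta/2$, for $b$ small (depending only on $\beta, M$) the function $s \mapsto h(x + s\nabla g(x)/|\nabla g(x)|)$ changes sign within distance $O_{\beta,M}(b)$ of $s = 0$ and has derivative bounded away from zero there, so the crossing point is unique and depends smoothly on $x$. This gives \eqref{eq:psi(gamma)-gamma small}. One then checks the image $\gamma^h := \psi_\gamma(\gamma)$ is a full connected component of $h^{-1}(0)$ contained in $B(R)$ (it is closed as the image of a compact set under a continuous map, open in $h^{-1}(0)$ by the IFT, hence a component), and that distinct $\gamma$'s inside $B(R-1)$, being separated by a definite distance after excluding the unstable tube or else genuinely far apart, map to distinct components — here one uses that if $\gamma_1, \gamma_2$ were close they would lie in a common tube, contradicting that each carries $|\nabla g| > \beta$; injectivity follows.

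For part (2), the region $\Gc_{\gamma,\gamma^h}$ enclosed between $\gamma$ and $\gamma^h = \psi_\gamma(\gamma)$ fibres over $\gamma$ with fibres the short segments from $x$ to $\psi_\gamma(x)$, each of length $\|\psi_\gamma(x) - x\|_\infty = O_{\beta,M}(b)$ by \eqref{eq:psi(gamma)-gamma small}. Integrating the fibre length against the $(n-1)$-dimensional surface measure on $\gamma$ (with a Jacobian factor that is bounded by a constant depending on $\beta, M$ only, since the normal direction $\nabla g/|\nabla g|$ makes a controlled angle with the hypersurface and $\gamma$ has bounded second fundamental form) yields
\begin{equation*}
|\vol_n(\Gc_{\gamma,\gamma^h})| \le C(\beta,M)\cdot b \cdot \vol_{n-1}(\gamma) = \vol_{n-1}(\gamma)\cdot O_{\beta,M}(b),
\end{equation*}
which is \eqref{eq:vol pert dom}.

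I expect the main obstacle to be the \emph{uniformity} of all estimates in $\beta$ and $M$ alone — in particular, making precise the quantitative tubular neighbourhood theorem: one must show the width of the tube on which the normal exponential map of $\gamma$ (or the gradient flow of $g$) is a diffeomorphism is bounded below purely in terms of $\beta$ (lower bound on $|\nabla g|$) and $M$ (upper bound on $\|g\|_{C^2}$), with no dependence on $R$ or on the geometry of the individual component $\gamma$. This is where the $C^2$ hypothesis is essential: the reach of $g^{-1}(0)$ near $\gamma$ is controlled by the ratio of $|\nabla g|$ to the operator norm of the Hessian, both of which are pinned by (1)–(2). Once this uniform reach bound is in hand, everything else — the IFT crossing estimate, the Jacobian bound, the separation argument for injectivity — is a routine perturbation computation with constants tracked through. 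It is also worth noting the ambient dimension is written as $\R^{n+1}$ in the statement (the lemma is applied both to nodal domains in $\R^n$ and, with the shift $n \mapsto n-1$, to nodal components as hypersurfaces), but the argument is dimension-agnostic.
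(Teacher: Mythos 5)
Your approach matches the paper's: both construct $\psi_\gamma$ by moving each $x\in\gamma$ a distance $r(x)$ along the fixed direction $N(x)=\nabla g(x)/|\nabla g(x)|$ until $h$ vanishes, using $|h(x)|<b$ together with the lower bound $N(x)\cdot\nabla h>\beta/2-b$ on an $r_0(\beta,M)$-neighbourhood of $\gamma$ to locate a unique crossing at distance $O_{\beta,M}(b)$, and then estimate $\vol(\Gc_{\gamma,\gamma^h})$ by integrating the fibre length over $\gamma$. If anything you spell out details the paper leaves implicit --- the uniform tube width, injectivity of $\gamma\mapsto\gamma^h$, and that $\psi_\gamma(\gamma)$ is a full connected component of $h^{-1}(0)$.
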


The proof of Lemma \ref{lem:vol pert} is postponed until immediately after the proof of
Proposition \ref{prop:bi(R-1,Fx)<=bi(R/L,fL)<=bi(R+1,Fx)}.

\begin{proof}[Proof of Proposition \ref{prop:bi(R-1,Fx)<=bi(R/L,fL)<=bi(R+1,Fx)} assuming Lemma \ref{lem:vol pert}]

We are going to show that the small exceptional event is
$$
\Delta=\bigcup\limits_{i=1}^{5}\Delta_{i}.
$$

Outside $ \Delta$ we have
\begin{equation}
\label{eq:||fxL-Fx||<beta}
\|f_{x;T}-\gfr_{n,\alpha}\|_{C^{1}(\overline{B}(2R))} <b,
\end{equation}
\begin{equation*}
\| f_{x;T}\|_{C^{2}(\overline{B}(2R))} , \| \gfr_{n,\alpha}\|_{C^{2}(\overline{B}(2R))}< M,
\end{equation*}
and assuming $b <\beta$ we also have
\begin{equation}
\label{eq:maxfxL,nabla>beta}
\min\limits_{\overline{B}(2R)}\max\{ |f_{x;T}|,|\nabla f_{x;T} | \}>\beta ; \;
\min\limits_{\overline{B}(2R)}\max\{ |\gfr_{n,\alpha}|,|\nabla \gfr_{n,\alpha} | \} > \beta.
\end{equation}
Hence the conditions of Lemma \ref{lem:vol pert} are satisfied
with $h=f_{x;T}$, $g=\gfr_{n,\alpha}$ (or the other way round) and $b,\beta$ as above.

By Lemma \ref{lem:vol pert} for each nodal domain of $f_{x;T}$ lying in $B(R)$ there is a unique nodal domain of
$\gfr_{n,\alpha}$  which is $O(b)$ close, hence lying in $B(R+1)$. Reversing the roles of $f$ and $\gfr$ we see that the same holds for the nodal domains of $\gfr$. We are going to prove the first inequality of \eqref{eq:bi(R-1,Fx)<=bi(R/L,fL)<=bi(R+1,Fx)},
the proof of the second one being identical.

Let us consider the nodal domains of $\gfr_{n,\alpha}$ that are inside $B(R-1)$ and of area at most $t-\xi$. Their total number is $\nod(\gfr_{n,\alpha},t-\xi,R-1)$. For each of these nodal domains $\Dc$ there is a unique nodal domain $\Dc'$ of $f_{x;T}$ which is $O(b)$ close to and lies in $B(R)$. By equation \eqref{eq:vol pert dom} of Lemma \ref{lem:vol pert} we have
$$
|\vol(\Dc)-\vol(\Dc')|\le O_{\beta,M}(b)\vol_{n-1}(\partial \Dc).
$$

Since we excluded $\Delta_5$, at most $\eta R^n$ of nodal domains $\Dc$ have boundary volume exceeding $Q$.
For all other domains $\vol(\Dc')<t$ provided that $b$ is so small that have $O(b)Q<\xi$. Hence, their number is bounded by $\nod(f_{x;T},t,R)$. This means that
$$
\nod(\gfr_{n,\alpha},t-\xi,R-1) \le \eta R^n +\nod(f_{x;T},t,R)
$$
i.e. the first inequality of \eqref{eq:bi(R-1,Fx)<=bi(R/L,fL)<=bi(R+1,Fx)}.

\vspace{3mm}

Finally we have to show that the probability of $\Delta$ could be made arbitrary small,
using Lemmas \ref{lem:Delta1 small}--\ref{lem:Delta 5 small}. The argument is straightforward but the order in which we have to choose all constants is a bit fiddly.

Let $t,\xi,\delta,\eta>0$ be given. First we use Lemma \ref{lem:Delta 5 small} to choose $R$ and $Q$ sufficiently large so that $\prob(\Delta_5)<\delta$. From now on this value of $R$ is fixed. By Lemma \ref{lem:Delta4 small} there is $\beta>0$ such that $\prob(\Delta_4)<\delta$. By Lemma \ref{lem:Delta2-3 small} we can choose $M$ and $T_0$ large enough so that probabilities of $\Delta_2$ and $\Delta_3$ are bounded by $\delta$. Fix some $0<b<\beta$, applying Lemma \ref{lem:Delta1 small} and (possibly) increasing $T_0$ we make sure that $\prob(\Delta_1)<\delta$ as well.

All in all for the above choice of all constants and for all $T>T_0$ we have that the probability of the exceptional event  $
\Delta:=\cup_{i=1}^{5}\Delta_{i}$ is bounded by $5\delta$. Replacing $\delta$ by $\delta/5$ we complete the proof of the proposition.

\end{proof}

\begin{proof}[Proof of Lemma \ref{lem:vol pert}]


Let $\gamma\subseteq B(R)$ be a connected component of $g^{-1}(0)$ and $x\in\gamma$ be any point on $\gamma$. By implicit function theorem we can
introduce local coordinates $(z,y)$ such that the surface near $x$ can be parameterized as a graph of a smooth function $p(z)$. In other words $\gamma$
around $x$ is given by $\phi(z)=(z,p(z))$ where $p$ is a smooth function on an open domain $U\subset \R^{n-1}$.
Let $$N(x)=N(z)=\nabla g/|\nabla g|$$
be a unit normal vector to $\gamma$ at $x$. Since the second derivatives of $g$ are uniformly bounded, there is a
number $r_0=r_0(\beta,M)>0$ depending on $\beta$ and $M$ only so that
$$
N(x)\cdot \nabla g>\beta/2
$$
on an $r_0$-neighbourhood of $x$ in $\R^{n}$.

Now fix $x\in \gamma$ and consider
$$
\zeta(r)=h(x+rN(x))=g(x+rN(x))+f(x+rN(x)),
$$
where $f=h-g$. Obviously
$$
|\zeta(0)|=|h(x)|<b,
$$
and
$$
\zeta'(r)>\beta/2-b
$$
for  all $|r|<r_0$. For sufficiently small $b$ we then have
\begin{equation}
\label{eq:zeta'>beta/4}
\zeta'>\beta/4
\end{equation}
for all $|r|<r_0$.  This means that  if $4b/\beta<r_0$, which is true for sufficiently small $b$, there is a unique $r$ with $|r|<4b/\beta<r_0$ such
that $\zeta(r)=0$ We denote it by
$r(x)$ or $r(z)$ with $x=\phi(z)$. It is important to notice that $r$ is uniformly bounded in terms of $\beta$ and $M$ and for fixed $\beta$ and $M$ it
is $O(b)$.

All in all, for every $b$ sufficiently small (depending on $\beta$ and $M$ only) the map
$$
x\mapsto x+r(x)N(x)
$$
maps $\gamma$ onto $\gamma^{h}$, moreover,
$$
r(x)=O_{\beta,M}(b).
$$
Since  $\Gc_{\gamma,\gamma^{h}}$ is the domain between two surfaces that are $O_{\beta,M}(b)$ close, the volume of this domain (signed or unsigned) is
bounded by $\vol_{n-1}(\gamma)O_{\beta,M}(b)$. This completes the proof of Lemma  \ref{lem:vol pert}.

\end{proof}

\section{Global results}
\label{sec: global results}

\subsection{
Proofs of the main results: Theorems \ref{thm:limit exist spher harm},   \ref{thm:limit exist band lim}, \ref{thm:alpha<1 Psi increas}, and\ref{thm:alpha=1 Psi increas t>t0}
}
\label{sec:thm lim exist proof}

\begin{notation}[Global notation]

\label{not:glob, |.|pm}

\begin{enumerate}

\item For $y\in\R$ denote $$|y |_{+} = \max\{0, y\},$$ and $$|y |_{-} = \max\{0, -y\},$$
so that $$|\cdot|=|\cdot |_{+}+|\cdot|_{-}.$$

\item Let $\nod (f;t)$ to be the total number
of nodal domains $\omega\in\Omega(f)$ of $f$
of volume $$\vol_{n}(\omega)<t.$$

\end{enumerate}

\end{notation}

\begin{proposition}
\label{prop:glob upper bound}

Let $\{f=f_{\alpha,T}\}_{T>0}$ be the random fields \eqref{eq:f band lim def}, $c(n,\alpha)$ the
Nazarov-Sodin constant of $\gfr_{n,\alpha} $, and $t>0$ a continuity point of
$$\Psi(\cdot)=\Psi_{n,\alpha}(\cdot),$$ as in the formulation of Theorem \ref{thm:loc scal}.
Then the following holds:
\begin{equation}
\label{eq:E[nod+-]->0}
\E\left[\left| \frac{\nod (f,t/T^n)}{c(n,\alpha)\vol_{n}(\M)\cdot T^{n}} - \Psi(t) \right|_{\pm} \right]\rightarrow  0,
\end{equation}
that is, \eqref{eq:E[nod+-]->0} is claimed for both $|\cdot |_{+}$ and $|\cdot|_{-}$,
as in Notation \ref{not:glob, |.|pm}.

\end{proposition}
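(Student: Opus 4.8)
The plan is to deduce the global statement from the local Theorem \ref{thm:loc scal} together with the integral-geometric sandwich of Lemma \ref{lem:int geom sandwitch euclidian} adapted to the manifold $\M$, and the semi-locality of nodal domains. The overall scheme is the same as the one used to prove Theorem \ref{thm:scal invar exist}, but now transported to $\M$. First I would fix a continuity point $t>0$ of $\Psi=\Psi_{n,\alpha}$, a small $\epsilon>0$, and choose $R>0$ large. I would then cover $\M$ by geodesic balls of radius $R/T$ (a ``uniformly finite'' cover in the sense that a fixed point of $\M$ is covered a bounded number of times, with the number of balls $\asymp \vol_n(\M)\cdot T^n/\vol(B(R))$) and apply a Riemannian analogue of Lemma \ref{lem:int geom sandwitch euclidian}: after averaging $\nod(f,t/T^n;x,R/T)$ over the centres $x$ (with respect to $d\vol_\M(x)/\vol(B(R/T))$) one gets a lower bound for $\nod(f,t/T^n)$ up to boundary corrections, and similarly an upper bound in terms of the $\nod^*$-version. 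The boundary correction terms are controlled by the number of nodal domains meeting $\partial B_x(R/T)$, which by Lemma \ref{lem:Kac-Rice band lim bnd unif} (applied to a slightly larger ball) together with an application of Lemma \ref{lem:Kac Rice crit ball sphere} on coordinate patches contributes on average only $O(1/R)$ per unit $\vol(B(R))$, hence is negligible after first sending $T\to\infty$ and then $R\to\infty$.

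Next I would bring in Theorem \ref{thm:loc scal}: for the fixed continuity point $t$, the rescaled local count $\nod(f,t/T^n;x,R/T)$ is concentrated around $c(n,\alpha)\,\Psi(t)\,\vol(B(R))$ with probability tending to $1$ as $T\to\infty$ (and the exceptional probability can be made arbitrarily small by taking $R$ large first). Here one must be a little careful that the estimate in Theorem \ref{thm:loc scal} is uniform in $x\in\M$ — this is exactly what the uniformity in the semiclassical approximation \eqref{eq:Kalpha asymp} and in Lemma \ref{lem:E[|fxL-Fx|<alpha]} provides, so that the ``$\limsup_{T}$'' there holds uniformly in the centre. Integrating the concentration statement over $x$ against $d\vol_\M(x)$ and using Fubini, the averaged local count $\frac{1}{\vol(B(R/T))}\int_\M \nod(f,t/T^n;x,R/T)\,d\vol_\M(x)$ converges, in mean, to $c(n,\alpha)\,\Psi(t)\,\vol_n(\M)$; combined with the sandwich from the previous paragraph this pins $\frac{\nod(f,t/T^n)}{c(n,\alpha)\vol_n(\M)T^n}$ between two quantities both converging in mean to $\Psi(t)$, which is precisely \eqref{eq:E[nod+-]->0} for both $|\cdot|_+$ and $|\cdot|_-$. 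The reason the statement is phrased with the one-sided $|\cdot|_\pm$ is that the lower and upper sandwich bounds come with slightly shifted parameters $t\pm\xi$; one uses continuity of $\Psi$ at $t$ to let $\xi\to 0$, and the $|\cdot|_+$ / $|\cdot|_-$ split is what survives cleanly at this stage of the argument (the full $|\cdot|$ statement, i.e.\ Theorem \ref{thm:limit exist band lim}, then follows by adding the two halves).

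The main obstacle, as in \cite{NaSo15} and \cite{SW}, is \emph{semi-locality}: a priori some nodal domains of $f_{\alpha;T}$ could be so elongated (large diameter but still small volume) that they are captured by no ball $B_x(R/T)$ of the cover, and hence are missed by the local count. I would handle this exactly as in the proof of Lemma \ref{lem:Delta 5 small}: using the nesting-tree bound $\sum_\omega d(\omega) = 2|E| \le 2(|V|-1)$ on the number of boundary components, together with the Kac-Rice nodal-volume estimate \eqref{eq:exp nod vol Kac-Rice} (here in the form of Lemma \ref{lem:Kac-Rice band lim bnd unif}) controlling the total nodal $(n-1)$-volume, one shows that outside an event of small probability all but $o(T^n)$ nodal domains have boundary of $(n-1)$-volume $\le Q$ for a fixed $Q$, hence diameter bounded by a fixed multiple of $Q$, hence fit inside a ball of radius $R/T$ once $R$ is large enough relative to $Q$. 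These $o(T^n)$ exceptional domains contribute negligibly to both sides after normalization by $T^n$. A second, minor technical point is the passage between the Riemannian volume on $\M$ and the Euclidean volume on the tangent space, which is dealt with by \eqref{eq:|vol(D)-vol(Phix(d))|<eps} as in \eqref{eq:nod fxL<=nod fL<=nodfxL}, absorbing the discrepancy into the $\pm\xi$ slack. Assembling these pieces — sandwich, uniform local law, semi-locality, and continuity of $\Psi$ at $t$ — gives the claimed convergence.
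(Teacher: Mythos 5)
Your high-level plan — sandwich the global count between $x$-averaged local counts, feed in Theorem~\ref{thm:loc scal}, and deal with semi-locality — is indeed the strategy the paper follows, and your treatment of the sandwich boundary terms and of the Riemannian/Euclidean volume discrepancy via \eqref{eq:|vol(D)-vol(Phix(d))|<eps} is on target. But there is one genuine gap in the way you pass from the local concentration statement to the global $L^1$ statement, and it is exactly the point for which the paper introduces the ``normal domains'' machinery.

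You write that ``integrating the concentration statement over $x$ and using Fubini'' gives convergence \emph{in mean} of the averaged local count. This does not follow: Theorem~\ref{thm:loc scal} gives convergence in probability (concentration), and convergence in probability plus a bounded first moment (which is all Lemma~\ref{lem:Kac-Rice band lim bnd unif} provides) does not upgrade to $L^1$ convergence without uniform integrability. On the rare event where the local count misbehaves, nothing in your argument controls how large it can be. The paper's solution is the excision of $\xi$-\emph{small} domains: since each surviving domain has Riemannian volume $\ge \xi T^{-n}$, one gets the \emph{deterministic} bound \eqref{eq:Nnorm<=delta^{-1}vol(B)}, namely $\nod_{norm}(f;x,r)\le\xi^{-1}T^{n}\vol_{\M}(B_{x}(r))$, which is then used verbatim on the bad event $\Delta_{T,t;x,R+D}$ in the proof of Proposition~\ref{prop:glob upper bound norm tree} to bound the integrand by $(1+\epsilon)\xi^{-1}$ and hence the total contribution by $(1+\epsilon)\xi^{-1}\Pc(\Delta_{T,t;x,R+D})$. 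You discuss excising the long (``$D$-long'') domains — via the nesting-tree and boundary-volume argument, which is a legitimate alternative to the paper's direct appeal to Lemma~\ref{lem:E[ND-long/L^{n}]->0} — but you never excise the \emph{small} ones, so the uniform-integrability step has no support. This is also why the paper states the result in the two halves $|\cdot|_{\pm}$: the excision is needed for $|\cdot|_{+}$, but not for $|\cdot|_{-}$, and the two are proved separately before being recombined.

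A second, more minor point: you assert that Theorem~\ref{thm:loc scal} holds uniformly in $x\in\M$ and attribute this to the uniformity of \eqref{eq:Kalpha asymp} and Lemma~\ref{lem:E[|fxL-Fx|<alpha]}. The paper deliberately does \emph{not} rely on such uniformity as stated; instead it applies a two-stage Egorov-type argument to produce a large subset $\M_{\rho}\subseteq\M$ on which the double limit $\varliminf_{R}\varlimsup_{T}\sup_{x\in\M_{\rho}}\Pc(\Delta_{T,t;x,R+D})=0$ holds, with the small remainder $\M\setminus\M_{\rho}$ again controlled by the deterministic bound \eqref{eq:Nnorm<=delta^{-1}vol(B)}. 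If you insist on the uniform version of Theorem~\ref{thm:loc scal}, you should prove it rather than assert it; otherwise, the Egorov route is the safer path and, again, it requires the $\xi$-small excision to control the exceptional set in $\M$.
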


Proposition \ref{prop:glob upper bound} will be proved in section \ref{sec:proof glob upper bound} for $|\cdot |_{+}$ only, with
the proof for $|\cdot |_{-}$ following along similar, but easier, lines, with no
need to excise the very small and very long domains (see section \ref{sec:very small long comp Riemann}).

\begin{proof}[Proof of Theorem \ref{thm:limit exist band lim} assuming Proposition \ref{prop:glob upper bound}]

Combining both estimates \eqref{eq:E[nod+-]->0} of Proposition \ref{prop:glob upper bound} (i.e, $|\cdot |_{+}$
and $|\cdot |_{-}$) implies that for every $t>0$ continuity point of $\Psi(\cdot)$ we have
\begin{equation}
\label{eq:E[|nod-Psi]->0}
\E\left[\left| \frac{\nod (f,t/T^n)}{c(n,\alpha)\vol(\M)\cdot T^{n}} - \Psi(t) \right| \right]\rightarrow  0,
\end{equation}
which is not quite the same as the statement \eqref{eq:lim exist band lim}
of Theorem \ref{thm:limit exist band lim} as the denominator needs to be replaced by $\nod (f)$
rather than $$c(n,\alpha)\vol(\M)\cdot T^{n}.$$
To this end we use the triangle inequality to write
\begin{equation}
\label{eq:N(t)/N-Psi triangle}
\begin{split}
&\E\left[\left| \frac{\nod (f,t/T^n)}{\nod (f)} - \Psi(t) \right| \right]
\le \E\left[\left| \frac{\nod (f,t/T^n)}{\nod (f)} -
\frac{\nod (f,t/T^n)}{c(n,\alpha)\vol(\M)\cdot T^{n}}  \right| \right] \\&+
\E\left[\left|\frac{\nod (f,t/T^n)}{c(n,\alpha)\vol(\M)\cdot T^{n}} -
\Psi(t) \right| \right].
\end{split}
\end{equation}
Since the second summand of the r.h.s. of \eqref{eq:N(t)/N-Psi triangle} vanishes by \eqref{eq:E[|nod-Psi]->0}
we only need to take care of the first one. We have
\begin{equation}
\label{eq:|N(t)/N-N(t)/cT^n|->0}
\begin{split}
&\E\left[\left| \frac{\nod (f,t/T^n)}{\nod (f)} -
\frac{\nod (f,t/T^n)}{c(n,\alpha)\vol(\M)\cdot T^{n}}  \right| \right]
\\&=  \E\left[\frac{\nod (f,t/T^n)}{c(n,\alpha)\vol(\M)\cdot \nod (f)} \cdot
\left|\frac{\nod (f)}{T^{n}}  - c(n,\alpha)\vol(\M)  \right| \right]\rightarrow 0,
\end{split}
\end{equation}
by \eqref{eq:NS result band lim}, as it is obvious that $$\frac{\nod_{\Omega}(f,t/T^n)}{\nod (f)} \le 1.$$
We finally substitute \eqref{eq:E[|nod-Psi]->0} and \eqref{eq:|N(t)/N-N(t)/cT^n|->0} into
\eqref{eq:N(t)/N-Psi triangle} to yield the statement \eqref{eq:lim exist band lim}
of Theorem \ref{thm:limit exist band lim}.

\end{proof}

\begin{proof}[Proof of Theorem \ref{thm:limit exist spher harm}]

The first assertion of Theorem \ref{thm:limit exist spher harm} is a particular case of
the statement of Theorem \ref{thm:limit exist band lim} with $\M=\Sc^{2}$ the round $2$-sphere, and
$\alpha=1$. The second assertion of Theorem \ref{thm:limit exist spher harm} is the content of
Theorem \ref{thm:alpha=1 Psi increas t>t0} (which, in this case, follows directly from Theorem
\ref{thm:scal invar increase RWM}).

\end{proof}

\begin{proof}[Proofs of Theorems \ref{thm:alpha<1 Psi increas} and \ref{thm:alpha=1 Psi increas t>t0}]

By Theorem Theorems \ref{thm:limit exist band lim} the distribution function is universal and from \eqref{eq:Bnalp clean} we know the spectral measures. In the case $\alpha<1$ it satisfies the axioms $(\rho 1)-(\rho3)$ and $(\rho4^*)$ and Theorem \ref{thm:alpha<1 Psi increas} follows directly from Theorem \ref{thm:scal invar increase gen}. In the case $\alpha=1$ the limiting field is the random plane wave and Theorem \ref{thm:alpha=1 Psi increas t>t0} follows from Theorem \ref{thm:scal invar increase RWM}.

\end{proof}

\subsection{Proof of Proposition \ref{prop:glob upper bound}}

\subsubsection{Excising the very small and very long domains}

\label{sec:very small long comp Riemann}

\begin{definition}

Let $\xi, D>0$ be parameters.

\begin{enumerate}

\item A domains $\omega\in \Omega(f)$ of $f=f_{\alpha;T}$ is called
`$\xi$-{\em small}' if its $n$-dimensional Riemannian volume in $\M$ is $$\vol_{\M}(\omega)<\xi T^{-n}.$$
Let $\nod_{\xi-sm}(f)$ be the total number of $\xi$-small domains (components) of $f$ in $\M$.

\item For $D>0$, a nodal domain $\omega\in\Omega(f)$
is $D$-{\em long} if its diameter is $>D/T$.
Let $\nod_{D-long}(f)$ be the total number of the $D$-long domains of $f$.

\item Given parameters $D,\xi > 0$ a nodal domain $\omega\in\Omega(f)$ is ($D,\xi$)-{\em normal} (or simply normal),
if it is not $\xi$-small nor $D$-long. For $t>0$ let $\nod_{norm}(f,t)$ be the total number of
$(\xi,D)$-normal domains of $f$ of volume $<t,$ and for $x\in\M$, $r>0$ let
$\nod_{norm}(f_{L},t;x,r)$ (resp. $\nod_{norm}^{*}(f_{L},t;x,r)$) be the number of
those contained in $B_{x}(r)$ (resp. intersecting $\overline{{B}_{x}}(r)$). Finally, if $t$ is omitted, then
it is assumed to be infinite $t=\infty$, i.e. we no restriction on the domain volume is imposed.

\end{enumerate}

\end{definition}

By the definition of normal domains, for every $t\in \R\cup\{\infty\}$ we have
\begin{equation}
\label{eq:Nnorm*(r)<=N(r+D/L)}
\nod_{norm}^{*}(f,t;x,r) \le \nod_{norm}\left(f,t;x,r+\frac{D}{T}\right)
\end{equation}
(as we discarded the very long ovals), and
\begin{equation}
\label{eq:Nnorm<=delta^{-1}vol(B)}
\nod_{norm}(f;x,r) \le \xi^{-1}T^{n} \vol_{\M}(B_{x}(r)),
\end{equation}
by the natural volume estimate.

\begin{lemma}[Cf. ~\cite{So12} Lemma $9$, see also ~\cite{SW} for a more detailed proof]
\label{lem:E[Nxi-sm/L^{n}]->0}
There exist constants $c,C>0$ so that we have the following estimate on the number of $\xi$-small components:
\begin{equation*}
\limsup\limits_{T\rightarrow\infty}\frac{\E[\nod_{\xi-sm}(f_{\alpha;T})]}{T^{n}} \le C\cdot \xi^{c}.
\end{equation*}
\end{lemma}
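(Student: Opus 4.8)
The plan is to reduce the count of $\xi$-small domains to a local, Kac--Rice computable quantity, via the elementary observation that a domain of small volume forces a critical point at which $f$ is nearly zero. Let $\omega\in\Omega(f_{\alpha;T})$ satisfy $\vol_{\M}(\omega)<\xi T^{-n}$, and let $p_\omega\in\omega\subset\M$ maximise $|f_{\alpha;T}|$ over $\overline\omega$; since $f_{\alpha;T}$ vanishes on $\partial\omega$ and keeps a constant sign inside, $p_\omega$ is interior and $\nabla f_{\alpha;T}(p_\omega)=0$, and $\dist_{\M}(p_\omega,\partial\omega)$ is at most the inradius of $\omega$, hence $\lesssim\xi^{1/n}/T$; pick $q_\omega\in\partial\omega$ with $\dist_{\M}(p_\omega,q_\omega)\lesssim\xi^{1/n}/T$, so $f_{\alpha;T}(q_\omega)=0$. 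Covering $\M$ by $O_{\M}(T^{n})$ geodesic balls of radius $\asymp 1/T$, assigning each $\xi$-small $\omega$ to one whose ball contains $p_\omega$ (the map $\omega\mapsto p_\omega$ is injective, distinct nodal domains being disjoint), and passing to the rescaled field $f_{x;T}$ of \eqref{eq:fxL def} in that chart, the rescaled copies $\tilde p,\tilde q\in\overline{B}(2R)$ of $p_\omega,q_\omega$ satisfy $|\tilde p-\tilde q|<\rho:=c_n\,\xi^{1/n}$ for $T$ large; integrating $\nabla f_{x;T}$ along the segment $[\tilde p,\tilde q]$ and using $\nabla f_{x;T}(\tilde p)=0$ (and \eqref{eq:|vol(D)-vol(Phix(d))|<eps} for the volume comparison behind $\rho$) gives
\[
|f_{x;T}(\tilde p)|=|f_{x;T}(\tilde p)-f_{x;T}(\tilde q)|\ \le\ |\tilde p-\tilde q|^2\,\|f_{x;T}\|_{C^2(\overline{B}(2R))}\ \le\ \rho^{2}\,\|f_{x;T}\|_{C^2(\overline{B}(2R))}.
\]
The diameter of $\omega$ plays no role: the inradius alone, of size $\lesssim\xi^{1/n}$ after rescaling, pins $\tilde p$ to within $\rho$ of the nodal set. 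Writing $\tau:=\rho^2=c_n^2\xi^{2/n}$, we obtain, for $T$ large,
\[
\nod_{\xi-sm}(f_{\alpha;T})\ \le\ \sum_{x}\ \#\bigl\{\,q\in \overline{B}(2R):\ \nabla f_{x;T}(q)=0,\ |f_{x;T}(q)|\le \tau\,\|f_{x;T}\|_{C^2(\overline{B}(2R))}\,\bigr\}.
\]

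The next step is to estimate the expectation of a single summand uniformly in $x\in\M$ and in $T$ large. Fix $M>0$ and split according to whether $\nu_x:=\|f_{x;T}\|_{C^2(\overline{B}(2R))}\le M$ or not. On $\{\nu_x\le M\}$ the summand is at most $\#\{q:\nabla f_{x;T}(q)=0,\ |f_{x;T}(q)|\le \tau M\}$, whose expectation Lemma \ref{lem:Kac-Rice precise} (applied to $\nabla f_{x;T}$) evaluates as
\[
\int_{B(2R)}\E\bigl[\,|\det \nabla^2 f_{x;T}(q)|\ \mathbf{1}_{\{|f_{x;T}(q)|\le \tau M\}}\ \big|\ \nabla f_{x;T}(q)=0\,\bigr]\ p_{\nabla f_{x;T}(q)}(0)\,dq\ =\ O_R\bigl((\tau M)^{c'}\bigr)
\]
for some $c'>0$: by Cauchy--Schwarz the conditional expectation is at most a uniform constant times $\prob(|f_{x;T}(q)|\le \tau M\mid \nabla f_{x;T}(q)=0)^{1/2}=O((\tau M)^{1/2})$, with all Gaussian densities, conditional variances and conditional moments appearing here bounded above and below uniformly in $q$, $x$ and $T$ large by the kernel asymptotics \eqref{eq:Kalpha asymp} and their differentiated versions. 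On the complementary event the summand is at most $\Ac(f_{x;T};B(2R))$, the number of critical points of $f_{x;T}$ in $B(2R)$, and Cauchy--Schwarz together with a uniform second moment bound $\E[\Ac(f_{x;T};B(2R))^2]=O_R(1)$ (a standard two-point Kac--Rice estimate, cf. Lemma \ref{lem:Kac Rice crit ball sphere}, with non-degeneracy and near-diagonal integrability uniform by \eqref{eq:Kalpha asymp}) and the Borell--TIS tail $\prob(\nu_x>M)\le e^{-(M-M_1)^2/(2\sigma_0^2)}$ for $M>M_1$ --- where $M_1:=\sup_{x,\,T\ \mathrm{large}}\E[\nu_x]<\infty$ and $\sigma_0^2:=\sup_{x,\,T\ \mathrm{large}}\sup_{|\beta|\le 2,\ q\in\overline{B}(2R)}\mathrm{Var}(\partial^{\beta}f_{x;T}(q))<\infty$, both finite by (the proof of) Lemma \ref{lem:Delta2-3 small} --- gives a contribution $O_R\bigl(e^{-(M-M_1)^2/(4\sigma_0^2)}\bigr)$. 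Altogether, for every $M>M_1$,
\[
\E\bigl[\,\#\{\,q\in\overline{B}(2R):\nabla f_{x;T}(q)=0,\ |f_{x;T}(q)|\le \tau\nu_x\,\}\,\bigr]\ \le\ C_R\Bigl((\tau M)^{c'}+e^{-(M-M_1)^2/(4\sigma_0^2)}\Bigr),
\]
uniformly in $x$ and in $T$ large.

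It then remains to optimise and sum. Choosing $M=M(\xi):=M_1+2\sigma_0\sqrt{\log(1/\xi)}$, which depends on $\xi$ \emph{alone}, makes the second term equal to $\xi$; since $\tau M\asymp \xi^{2/n}\sqrt{\log(1/\xi)}$ decays faster than any power $\xi^{s}$ with $s<2/n$, the first term is $\le C\,\xi^{c''}$ for a suitable $c''>0$ and all $\xi$ small. Hence each summand has expectation $\le C_R\,\xi^{c''}$ for all $\xi$ small and all $T$ large, and summing over the $O_{\M}(T^{n})$ charts yields $\E[\nod_{\xi-sm}(f_{\alpha;T})]\le C\,T^{n}\,\xi^{c''}$ for all $\xi$ small; for $\xi$ bounded away from $0$ the claim is trivial, since $\nod_{\xi-sm}(f)\le\nod(f)$ and $\E[\nod(f)]=O(T^n)$ by Lemma \ref{lem:Kac-Rice band lim bnd unif} with $r=\diam(\M)$. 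Taking $\limsup_{T\to\infty}$ gives the lemma with $c=c''$ and a suitable $C$.

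The main obstacle is the uniformity bookkeeping: every Kac--Rice ingredient above --- the density of $\nabla f_{x;T}(q)$ at the origin, the conditional variance of $f_{x;T}(q)$ given $\nabla f_{x;T}(q)=0$, the conditional moments of $\det\nabla^2 f_{x;T}(q)$, the two-point density behind $\E[\Ac^2]$, and the mean and jet-variances of $f_{x;T}$ --- must be controlled uniformly over the centre $x\in\M$ and over all large $T$, which is exactly what the uniform semiclassical asymptotics \eqref{eq:Kalpha asymp} (differentiated up to fourth order) provide. The second, more conceptual, point --- and the reason for absorbing the random $C^2$-norm $\nu_x$ inside each chart's count rather than excising a single global event --- is that the threshold $M$ must be allowed to depend on $\xi$ only: a choice $M=M(T)\to\infty$ would reintroduce a spurious $\sqrt{\log T}$ factor (coming from $\E\,\|f_{\alpha;T}\|_{C^0(\M)}\asymp\sqrt{\log T}$ on the whole manifold) and would destroy the $\limsup$.
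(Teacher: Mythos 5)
The paper gives no in-text proof of this lemma, deferring to \cite{So12} (Lemma~9) and \cite{SW}; your argument reconstructs exactly the route of those references --- the inradius of a $\xi$-small domain is $\lesssim\xi^{1/n}/T$, so its interior extremum $p_\omega$ is a genuine critical point of $f$ at which a second-order Taylor expansion toward the nearest boundary point forces $|f_{x;T}(\tilde p)|\lesssim\|f_{x;T}\|_{C^2}\,\xi^{2/n}$ after rescaling, and a Kac--Rice anti-concentration bound for such ``almost-degenerate'' critical points, uniform over the $O(T^n)$ charts, closes the count. Your two emphases --- that the diameter of $\omega$ is irrelevant and only the inradius matters, and that the $C^2$-threshold $M$ must depend on $\xi$ alone (via a per-chart Borell--TIS bound, not a global one which would cost $\sqrt{\log T}$) --- are precisely the delicate points of the cited argument and are handled correctly. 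One small inaccuracy to fix: the displayed integral you credit to Lemma~\ref{lem:Kac-Rice precise} is the \emph{marked} Kac--Rice formula (zeros of $\nabla f_{x;T}$ weighted by $\mathbf{1}_{\{|f_{x;T}|\le\tau M\}}$), which Lemma~\ref{lem:Kac-Rice precise} as stated does not give; cite the weighted version in \cite{AW}, or derive it from the stated lemma by a routine monotone-class or smooth-approximation argument, rather than presenting it as a direct application.
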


\begin{lemma}[Cf. ~\cite{So12} Lemma $8$]
\label{lem:E[ND-long/L^{n}]->0}
There exists a constant
$C>0$ such that the following bound holds for the number of $D$-long components:
\begin{equation*}
\limsup\limits_{T\rightarrow\infty}\frac{\E[\nod_{D-long}(f_{\alpha;T})]}{T^{n}} \le C\cdot \frac{1}{D}.
\end{equation*}
\end{lemma}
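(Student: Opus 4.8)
The plan is to run a double-counting argument of Nazarov--Sodin type, pitting $D$-long domains against a cover of $\M$ by geodesic balls at the natural wavelength scale $\rho_{0}/T$. First I would fix a small constant $\rho_{0}>0$ and, for each large $T$, a maximal $\tfrac{\rho_{0}}{2T}$-separated set in $\M$; the associated balls $\{B_{i}=B_{x_{i}}(\rho_{0}/T)\}_{i=1}^{N_{T}}$ then cover $\M$ with multiplicity bounded by a dimensional constant, and $N_{T}\asymp T^{n}\vol(\M)$. Two elementary facts enter. (i) If $X\subseteq\M$ is connected and compact with $\diam(X)>D/T$, then the balls $B_{i}$ that meet $X$ cover $X$ and each has diameter $\le 2\rho_{0}/T$, and a family of such sets whose union contains a connected set of diameter $>D/T$ must have at least $c_{0}D$ members, $c_{0}=1/(2\rho_{0})$; this is nonvacuous only once $D$ exceeds a threshold, but for bounded $D$ the asserted estimate is trivial since $\nod_{D-long}(f_{\alpha;T})\le\nod_{\Omega}(f_{\alpha;T})=O(T^{n})$ anyway. (ii) Since $0$ is a.s. a regular value of $f_{\alpha;T}$, each connected component of the nodal set is a smooth closed hypersurface lying on the boundary of exactly two nodal domains.

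Applying (i) with $X=\overline\omega$ for each $D$-long domain $\omega$ (so $\diam(\overline\omega)=\diam(\omega)>D/T$) and exchanging the order of summation,
\begin{equation*}
c_{0}D\cdot\nod_{D-long}(f_{\alpha;T})\le\sum_{\omega\,:\,D\text{-long}}\#\{i:B_{i}\cap\overline\omega\neq\emptyset\}=\sum_{i=1}^{N_{T}}\#\{\omega\,:\,D\text{-long},\ B_{i}\cap\overline\omega\neq\emptyset\}.
\end{equation*}
For a fixed $B_{i}$, a domain with $\overline\omega\cap B_{i}\neq\emptyset$ either contains $B_{i}$ (at most one such $\omega$, the domains being disjoint) or else $B_{i}$ meets the complement of $\omega$ and, being connected, meets $\partial\omega$; hence, using (ii) for the last step,
\begin{equation*}
\#\{\omega:\overline\omega\cap B_{i}\neq\emptyset\}\le 1+\#\{\omega:\partial\omega\cap B_{i}\neq\emptyset\}\le 1+2\,\#\{\text{nodal components of }f_{\alpha;T}\text{ meeting }B_{i}\}.
\end{equation*}
Summing over $i$ and taking expectations reduces the whole estimate to
\begin{equation*}
\sum_{i=1}^{N_{T}}\E\big[\#\{\text{nodal components of }f_{\alpha;T}\text{ meeting }B_{i}\}\big]=O\big(N_{T}\big)=O\big(T^{n}\vol(\M)\big),
\end{equation*}
after which $c_{0}D\cdot\E[\nod_{D-long}(f_{\alpha;T})]\le N_{T}+O(N_{T})$ yields $\limsup_{T}T^{-n}\E[\nod_{D-long}(f_{\alpha;T})]=O(1/D)$, which is the claim.

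The one genuine input is a \emph{uniform} Kac--Rice bound: $\E[\#\{\text{nodal components of }f_{\alpha;T}\text{ meeting }B_{x}(\rho_{0}/T)\}]=O(1)$ uniformly in $x\in\M$ and $T$. This is the component-counting analogue of Lemma~\ref{lem:Kac-Rice band lim bnd unif}, and I would obtain it the usual way: rescale the ball to unit size; a component meeting $B(\rho_{0})$ is either contained in $B(2\rho_{0})$ — and the number of components of the level set inside a fixed ball is dominated, via Morse theory, by the number of critical points in a slightly larger ball — or it crosses $\partial B(r)$ for some (hence, by averaging, a suitable generic) $r\in[\rho_{0},2\rho_{0}]$, contributing to a level set on the sphere; both counts have $O(1)$ expectation by the Kac--Rice upper bounds of \cite{So12,SW} (cf. Lemmas~\ref{lem:Kac Rice crit ball sphere} and~\ref{lem:Kac-Rice band lim bnd unif}), uniformly in $x$ and $T$ thanks to the semiclassical asymptotics \eqref{eq:Kalpha asymp} for $K_{\alpha;T}$ and its derivatives.

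I expect this uniform component bound to be the main obstacle; everything else (bounded multiplicity of the cover, the covering/diameter fact (i)) is routine and, being stated in terms of geodesic balls and geodesic diameter, insensitive to the curvature of $\M$. It is worth stressing that the scheme is robust to ``thin'' domains and components — the configuration one might fear — because a long thin nodal tube still meets $\gtrsim D$ of the balls $B_{i}$ along its length, so the argument never needs any lower bound on widths of domains or on the $(n-1)$-volumes of nodal components.
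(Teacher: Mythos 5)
Your proof is correct, and it takes a genuinely different route from the one the paper alludes to. The paper cites Sodin's Lemma~8, whose argument (and the one that naturally fits the framework set up in Lemma~\ref{lem:int geom sandwitch euclidian} and Lemma~\ref{lem:int geom sand Riemann}) runs the integral-geometric sandwich at the \emph{$D$-dependent} scale $r\sim D/T$: a $D$-long domain fits in no ball of radius $D/(2T)$, so it contributes to the upper sandwich term $\nod^{*}(f;x,r)$ but never to the lower term $\nod(f;x,r)$; the discrepancy is controlled by the expected number of domains crossing $\partial B_{x}(r)$, which Kac--Rice bounds by $O((rT)^{n-1})=O(D^{n-1})$, and dividing by $\vol B(r)\asymp (D/T)^{n}$ and integrating over $\M$ gives $O(T^{n}/D)$. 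You instead discretize the averaging into a cover by $\asymp T^{n}$ balls at the \emph{fixed} wavelength scale $\rho_{0}/T$, and extract the $1/D$ factor combinatorially from the fact that each $D$-long domain meets $\gtrsim D$ balls. The payoffs differ: your version needs a uniform Kac--Rice input only at a single fixed scale (expected number of nodal components meeting a wavelength-scale ball is $O(1)$, uniformly in $x$ and $T$), whereas Sodin's uses it at the growing scale $D/(2T)$ but slots directly into the sandwich machinery already present in the paper.

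Two small technical points you should tighten in a write-up, though neither is a genuine gap. First, the "exactly two domains per component" claim needs the observation that a nodal hypersurface across which $f$ changes sign is automatically two-sided (the sign of $f$ coorients it), which rules out the one-sided case on a general manifold; with that, the bound $\#\{\omega:\partial\omega\cap B_{i}\neq\emptyset\}\le 2\,\#\{\gamma:\gamma\cap B_{i}\neq\emptyset\}$ is clean. Second, you should make explicit how the two halves of your uniform component bound are closed: components entirely contained in $B_{x}(2\rho_{0}/T)$ are closed hypersurfaces, each enclosing a nodal domain inside $B_{x}(2\rho_{0}/T)$, so Lemma~\ref{lem:Kac-Rice band lim bnd unif} directly gives $O(1)$ in expectation; components exiting $B_{x}(2\rho_{0}/T)$ meet $\partial B_{x}(r)$ for every $r\in(\rho_{0}/T,2\rho_{0}/T)$, are separated there, and hence are at most the number of connected components of $\bigl(f|_{\partial B_{x}(r)}\bigr)^{-1}(0)$, controlled by critical points of the restriction as in Lemma~\ref{lem:Kac Rice crit ball sphere}, part (2) --- with the manifold-uniform version following from the semiclassical asymptotics \eqref{eq:Kalpha asymp} and its derivatives, exactly as for Lemma~\ref{lem:Kac-Rice band lim bnd unif}. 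You correctly flag this uniform bound as the main input; it is the same ingredient that Sodin's version relies on, just invoked at a different scale.
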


\subsubsection{Proof of Proposition \ref{prop:glob upper bound}}

\label{sec:proof glob upper bound}

\begin{proposition}
\label{prop:glob upper bound norm tree}
Let $\xi, D>0$ be fixed, and $$\Psi=\Psi_{n,\alpha}.$$ Then for every $t>0$ continuity of $\Psi(\cdot)$ the following holds:
\begin{equation}
\label{eq:E[Gnorm]+ -> 0}
\E\left[\left| \frac{\nod_{norm}(f_{\alpha;T},t/T^{n})}{c(n,\alpha)\vol(\M)\cdot T^{n}} - \Psi(t) \right|_{+} \right]\rightarrow  0.
\end{equation}
\end{proposition}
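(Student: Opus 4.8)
The strategy is to transfer the local estimate of Theorem \ref{thm:loc scal} to the global manifold $\M$ by means of an integral-geometric sandwich analogous to Lemma \ref{lem:int geom sandwitch euclidian}, but now applied on $\M$ using geodesic balls and the semilocal structure of normal domains. First I would fix $\xi, D>0$ and a continuity point $t>0$ of $\Psi$, and introduce a small radius scale $\rho>0$ and the corresponding rescaled radius $R=\rho T$ on the tangent space. The key point is that since we are only counting normal domains (not $\xi$-small, not $D$-long), each such domain has diameter at most $D/T$, so the error terms coming from domains straddling the boundary of a geodesic ball $B_x(\rho)$ are controlled via \eqref{eq:Nnorm*(r)<=N(r+D/L)} and \eqref{eq:Nnorm<=delta^{-1}vol(B)}. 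Covering $\M$ by geodesic balls and integrating in $x$ over $\M$ against the Riemannian volume, one obtains
\begin{equation*}
\frac{1}{\vol(B(\rho))}\int_{\M}\nod_{norm}\left(f,\frac{t}{T^n};x,\rho\right)d\vol(x) \le \nod_{norm}\left(f,\frac{t}{T^n}\right) + (\text{small error}),
\end{equation*}
which is the one-sided bound matching the $|\cdot|_+$ in the statement.

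Next I would pass from the geodesic balls on $\M$ to the rescaled Euclidean picture: by the definition \eqref{eq:fxL def} of $f_{x;T}$ and the near-isometry property \eqref{eq:|vol(D)-vol(Phix(d))|<eps} of $\Phi_x$, we have
\begin{equation*}
\nod_{norm}\left(f,\frac{t}{T^n};x,\rho\right) \approx \nod_{norm}\left(f_{x;T}, t; x, R\right),
\end{equation*}
up to an adjustment of $t$ by an arbitrarily small $\xi'$ (harmless since $t$ is a continuity point of $\Psi$). Then Theorem \ref{thm:loc scal} — more precisely the concentration statement \eqref{eq:asymp cnt loc c(x)} applied to $\gfr_{n,\alpha}$ — gives that for each fixed $x$, as $T\to\infty$ (with $R\to\infty$ afterwards),
\begin{equation*}
\frac{\nod\left(f_{x;T}, t; x, R\right)}{c(n,\alpha)\vol(B(R))} \longrightarrow \Psi(t)
\end{equation*}
in probability, and since $\nod_{norm}\le\nod$ this yields the upper bound $\Psi(t)$ for the normalized normal count in the limit. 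The uniformity in $x\in\M$ needed to integrate this over $\M$ comes from the fact that $\gfr_{n,\alpha}$ is universal (independent of $x$) together with the uniform semiclassical approximation \eqref{eq:Kalpha asymp} and the uniform Kac-Rice bound of Lemma \ref{lem:Kac-Rice band lim bnd unif}, which provides a uniformly integrable dominating function for the counts divided by $T^n$, permitting the exchange of limit and integral (via e.g. Fatou / dominated convergence on the expectation).

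The main obstacle — and the reason this is stated as a separate proposition rather than a corollary — is the interchange of the three limiting operations ($T\to\infty$, then $R\to\infty$, and the integration/expectation over $\M$ and the probability space) in a way that is uniform over $x\in\M$. Theorem \ref{thm:loc scal} is a pointwise-in-$x$ statement of the form $\lim_R\limsup_T$, and one must upgrade it to a bound that holds after integrating in $x$; this requires the a priori upper bounds of Lemma \ref{lem:Kac-Rice band lim bnd unif} (uniform in $x$ and $T$) to dominate the integrand and to absorb the contribution of balls near the ``edges'' of the cover. A secondary technical point is ensuring that the discarded $\xi$-small and $D$-long domains really are negligible in this averaged sense, but that is exactly what Lemmas \ref{lem:E[Nxi-sm/L^{n}]->0} and \ref{lem:E[ND-long/L^{n}]->0} supply, and will be used in the passage from Proposition \ref{prop:glob upper bound norm tree} back to Proposition \ref{prop:glob upper bound}. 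Assembling these pieces, sending $\xi\to 0$, $D\to\infty$ (and $\rho\to 0$, $R\to\infty$) in the right order, gives \eqref{eq:E[Gnorm]+ -> 0}.
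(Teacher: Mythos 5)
Your overall architecture is right and matches the paper's: reduce to a local count via an integral-geometric sandwich on $\M$ (Lemma \ref{lem:int geom sand Riemann}), pass to the tangent-space picture via the near-isometry \eqref{eq:|vol(D)-vol(Phix(d))|<eps} and the observation $\nod_{norm}^*(\cdot;x,r)\le\nod_{norm}(\cdot;x,r+D/T)$, feed the result into Theorem \ref{thm:loc scal}, and use the deterministic volume estimate \eqref{eq:Nnorm<=delta^{-1}vol(B)} to tame the contribution of bad events. Two points are worth flagging. First, the displayed inequality you wrote is the \emph{lower} sandwich (it gives $\nod_{norm}(f,t/T^n)\gtrsim\frac{1}{\vol B(\rho)}\int_\M\nod_{norm}(\ldots)$), which bounds the global count from below; since $|\cdot|_+$ penalizes over-counting, what you actually need is the \emph{upper} sandwich $\nod_{norm}(f,t/T^n)\le\frac{1+\epsilon}{\vol B(\rho)}\int_\M\nod^*_{norm}(f,t/T^n;x,\rho)\,dx$, followed by the step $\nod^*_{norm}\le\nod_{norm}(\cdot;x,\rho+D/T)$ you cite; your surrounding prose suggests you understand this, but the displayed line has the inequality in the wrong direction. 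Second, for the key interchange of $\int_\M dx$ with $\lim_R\limsup_T$, you invoke Fatou/dominated convergence where the paper uses a double Egorov-type argument to manufacture a set $\M_\rho$ of almost-full volume on which $\sup_{x\in\M_\rho}\Pc(\Delta_{T,t;x,R+D})\to0$. Your route is valid and slightly more direct: since $\Pc(\Delta_{T,t;x,R+D})\le1$ and $\M$ has finite volume, reverse Fatou gives $\limsup_T\int_\M\Pc\,dx\le\int_\M\limsup_T\Pc\,dx$, and then bounded convergence in $R$ finishes using the pointwise-in-$x$ conclusion of Theorem \ref{thm:loc scal}; the domination needed there is simply $\Pc\le1$ together with the deterministic bound $\nod_{norm}\le\xi^{-1}T^n\vol_\M(B_x(\cdot))$ on the complementary bad event, rather than the Kac-Rice $L^1$ bound you attribute it to (Kac-Rice is what controls the sandwich error term $\nod^*_{norm}-\nod_{norm}$ and keeps the expectations finite). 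In short: same skeleton, a defensible replacement of Egorov by Fatou/DCT at the interchange step, one directional slip in the sandwich, and a minor misattribution of which bound supplies the domination.
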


The proof of Proposition \ref{prop:glob upper bound norm tree} will be given in section \ref{sec:proof prop glob norm}.

\begin{proof}[Proof of Proposition \ref{prop:glob upper bound} assuming Proposition \ref{prop:glob upper bound norm tree}]

The estimate \eqref{eq:E[nod+-]->0} for $|\cdot |_{+}$ follows directly from \eqref{eq:E[Gnorm]+ -> 0},
lemmas \ref{lem:E[ND-long/L^{n}]->0}-\ref{lem:E[Nxi-sm/L^{n}]->0}, and the triangle inequality for $|\cdot |_{+}$.
The proof of \eqref{eq:E[nod+-]->0} for $|\cdot |_{-}$ follows along the same lines, except that in this case
we do not need to excise the very small and very long domains (which makes the proof somewhat simpler).

\end{proof}

\subsection{Proof of Proposition \ref{prop:glob upper bound norm tree}}
\label{sec:proof prop glob norm}

We need to formulate a couple of auxiliary lemmas.

\begin{lemma}[cf. ~\cite{So12} Lemma $1$, and Lemma \ref{lem:int geom sandwitch euclidian}
in the scale-invariant case]
\label{lem:int geom sand Riemann}

Given $\epsilon > 0$, there exists $\eta>0$ such that for every $r<\eta$, $t>0$
\begin{equation}
\label{eq:Nnorm<=(1+eps)*int loc}
\begin{split}
&(1-\epsilon)\int\limits_{\M}\frac{\nod_{norm}(f_{\alpha;T},t;x,r)}{\vol(B(r))}dx
\le \nod_{norm}(f_{\alpha;T},t) \\&\le (1+\epsilon)\int\limits_{\M}\frac{\nod_{norm}^{*}(f_{\alpha;T},t;x,r)}{\vol(B(r))}dx
\end{split}
\end{equation}

\end{lemma}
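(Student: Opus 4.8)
The plan is to establish the integral--geometric sandwich \eqref{eq:Nnorm<=(1+eps)*int loc} by adapting the proof of Lemma \ref{lem:int geom sandwitch euclidian} (the Euclidean case) to the Riemannian setting, restricted to the class of normal domains. The only genuinely new ingredient is the compactness of $\M$ together with the near-isometry property \eqref{eq:|vol(D)-vol(Phix(d))|<eps}, which forces us to work on a sufficiently small scale $r<\eta$.

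First I would fix $\epsilon>0$ and choose $\eta>0$ small enough that for every $x\in\M$ the geodesic ball $B_x(r)$ with $r<\eta$ sits inside a normal coordinate patch on which the exponential map is a diffeomorphism, and moreover the local volume distortion is controlled: by \eqref{eq:|vol(D)-vol(Phix(d))|<eps}, for $r<\eta$ the quantities $\vol_\M(B_x(r))$ and $\vol(B(r))$ (the Euclidean ball volume) agree up to a factor $(1\pm\epsilon)^{1/2}$, uniformly in $x\in\M$ by compactness. Next, for each normal component $\gamma\subseteq\M$ — i.e.\ a nodal component bounding a normal nodal domain of volume $<t$ — I would define, exactly as in the proof of Lemma \ref{lem:int geom sandwitch euclidian}, the inner and outer swept sets
\begin{equation*}
G_*(\gamma)=\bigcap_{v\in\gamma}B_v(r)=\{x:\gamma\subseteq B_x(r)\},\qquad G^*(\gamma)=\bigcup_{v\in\gamma}\overline{B_v(r)}=\{x:\gamma\cap\overline{B_x(r)}\neq\emptyset\}.
\end{equation*}
Since a normal domain has diameter $<D/T$, for $T$ large (so that $D/T<\eta$) each such $\gamma$ is contained in a single coordinate patch, and the chain of inclusions $G_*(\gamma)\subseteq B_v(r)\subseteq G^*(\gamma)$ gives $\vol_\M(G_*(\gamma))\le\vol_\M(B(r))\le\vol_\M(G^*(\gamma))$ after absorbing the uniform distortion into the factor $(1\pm\epsilon)$; summing over all normal $\gamma$ with enclosed volume $\le t$ and exchanging summation with integration (as in \eqref{eq:sum vol G_*(g)*bi(g)lower}--\eqref{eq:sum vol G^*(g)*bi(g)upper}) yields precisely the two-sided bound \eqref{eq:Nnorm<=(1+eps)*int loc}, where on the left a component $\gamma$ contributes to $\nod_{norm}(f_{\alpha;T},t;x,r)$ whenever $x\in G_*(\gamma)$ and on the right it contributes to $\nod^*_{norm}(f_{\alpha;T},t;x,r)$ whenever $x\in G^*(\gamma)$.

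The main obstacle — and the reason the unrestricted statement would fail — is the possibility of ``large'' nodal domains wrapping around $\M$: a component of diameter comparable to the injectivity radius cannot be swept by small balls in a controlled way, and the near-isometry estimate \eqref{eq:|vol(D)-vol(Phix(d))|<eps} breaks down outside a single patch. Restricting to normal (hence $D$-short) domains is exactly what removes this difficulty, at the price of requiring $T$ large depending on $D$ and $\eta$; this is harmless since Lemmas \ref{lem:E[Nxi-sm/L^{n}]->0} and \ref{lem:E[ND-long/L^{n}]->0} let us discard the abnormal domains afterwards. A minor technical point is to ensure the uniform choice of $\eta$: this follows from compactness of $\M$, continuity of the metric, and the positivity of the injectivity radius, exactly as in \cite[Lemma~1]{So12}, to which one may also refer for the routine verification that the swept-volume bookkeeping respects the ``normal'' restriction.
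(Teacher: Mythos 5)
Your strategy — redo the $G_*$, $G^*$ bookkeeping of Lemma \ref{lem:int geom sandwitch euclidian} on $\M$, integrate over all of $\M$ (so there are no boundary-layer corrections as in $B(R\pm r)$), and absorb the uniform geodesic volume distortion $\vol_\M(B_x(r))/\vol(B(r))=1\pm O(\epsilon)$ (valid for $r<\eta$, uniformly in $x$ by compactness, via \eqref{eq:|vol(D)-vol(Phix(d))|<eps}) into the $(1\pm\epsilon)$ prefactors — is exactly the adaptation the paper has in mind when it states that the proof is ``very similar'' to the Euclidean case and omits it.

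That said, two of your auxiliary assertions are not correct, and they lead you to insert a quantifier (``$T$ large'') that the lemma does not have. The sandwich does \emph{not} require the components to be small or to sit in a single coordinate patch. For any boundary component $\gamma$ and any $v\in\gamma$, the chain $G_*(\gamma)\subseteq B_v(r)\subseteq G^*(\gamma)$ is automatic regardless of $\diam(\gamma)$: if $\gamma$ is too large to fit in a ball of radius $r$, then $G_*(\gamma)=\emptyset$ and the left-hand inclusion is vacuous, while $B_v(r)\subseteq G^*(\gamma)$ always holds. The near-isometry estimate \eqref{eq:|vol(D)-vol(Phix(d))|<eps} is applied only to the single small ball $B_v(r)$, never to $\gamma$, so the smallness of $\gamma$ plays no role; the sandwich holds deterministically, for all $T$ and all components. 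Consequently the requirement ``$T$ large so that $D/T<\eta$'' is unnecessary and, since the lemma has no hypothesis on $T$, would leave a gap were it genuinely needed. The restriction to normal domains is present in the statement not because the sandwich would fail without it, but because the downstream argument (Proposition \ref{prop:glob upper bound norm tree}) uses the $D$-short bound to convert $\nod^*_{norm}$ into $\nod_{norm}$ in a slightly larger ball via \eqref{eq:Nnorm*(r)<=N(r+D/L)}, and the $\xi$-small exclusion to obtain the crude volume bound \eqref{eq:Nnorm<=delta^{-1}vol(B)}. The lemma itself holds verbatim with $\nod_{norm}$, $\nod^*_{norm}$ replaced by $\nod$, $\nod^*$.
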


The proof of Lemma \ref{lem:int geom sand Riemann} is very similar to the one of Lemma \ref{lem:int geom sandwitch euclidian} and is omitted here.

\begin{proof}[Proof of Proposition \ref{prop:glob upper bound norm tree}]

For convenience in course of this proof we will assume that $\M$ is unit volume
\begin{equation}
\label{eq:vol(M)=1}
\vol(\M)=1,
\end{equation}
and recall that $f=f_{\alpha;T}$ are the band-limited random fields \eqref{eq:f band lim def}, and that we use
the shorthand $$\Psi(t)=\Psi_{n,\alpha}(t)$$ as in the formulation of Proposition \ref{prop:glob upper bound norm tree}.
Let $\epsilon>0$ be a small number.
To bound the l.h.s. of \eqref{eq:E[Gnorm]+ -> 0} we let $R>0$ to be sufficiently big,
so that both $R/T<\eta$ as in Lemma \ref{lem:int geom sand Riemann}, and $D/R$ is sufficiently small, so that
the following holds
\begin{eqnarray}
\label{eq:volMB(r)/volB(r)<1+eps}
\left|\frac{\vol_{\M}(B_{x}(R/T))}{\vol(B(R/T))} -1\right| &<& \epsilon,
\\  \frac{\vol(B(R+D))}{\vol(B(R))} &<& 1+\epsilon \nonumber
\end{eqnarray}
uniformly for $x\in\M$.

Now apply Lemma
\ref{lem:int geom sand Riemann} with $r=\frac{R}{ T}$ and $t$ replaced by $\frac{t}{T^{n}}$;
by the triangle inequality for $|\cdot |_{+}$ we have
\begin{equation}
\label{eq:sand apply nodT}
\begin{split}
&\E\left[\left| \frac{\nod_{norm}(f_{\alpha;T},t/T^{n})}{c(n,\alpha)\cdot T^{n}} - \Psi(t) \right|_{+} \right]
\\&\le \E\left[ \int\limits_{\M}\left|(1+2\epsilon)\frac{\nod_{norm}^{*}(f_{\alpha;T},t/T^{n};x,R/T)}{c(n,\alpha)\cdot\vol B(R+D)}
-\Psi(t)\right|_{+}dx \right]
\\&\le
\E\left[ \int\limits_{\M}\left|\frac{\nod_{norm}(f_{\alpha;T},t/T^{n};x,(R+D)/T)}{c(n,\alpha)\cdot\vol B(R+D)}-\Psi(t)\right|_{+}dx \right]
\\&+O\left(\epsilon\cdot \int\limits_{\M}\frac{\E[\nod_{norm}(f_{\alpha;T};x,(R+D)/T)]}{\vol B(R+D)}dx \right),
\end{split}
\end{equation}
by \eqref{eq:Nnorm*(r)<=N(r+D/L)} and \eqref{eq:volMB(r)/volB(r)<1+eps}.
Observe that the integrand $$\frac{\E[\nod_{norm}(f_{\alpha;T};x,(R+D)/T)]}{\vol B(R+D)}$$ is uniformly bounded by
Lemma \ref{lem:Kac-Rice band lim bnd unif}. Hence \eqref{eq:sand apply nodT} is
\begin{equation*}
\begin{split}
&\E\left[\left| \frac{\nod_{norm}(f_{\alpha;T},t/T^{n})}{c(n,\alpha)\cdot T^{n}} - \Psi(t) \right|_{+} \right]
\\&\le\E\left[\int\limits_{\M}\left| \frac{\nod_{norm}(f_{\alpha;T},t/T^{n};x,(R+D)/T)}{c(n,\alpha)\cdot
\vol B(R+D)}-\Psi(t)\right|_{+}dx\right] +
O(\epsilon).
\end{split}
\end{equation*}
It is then sufficient to prove that
\begin{equation}
\begin{split}
\label{eq:int Omega,M N(S,x)-cS(x)}
&\E\left[\int\limits_{\M} \left|\frac{\nod_{norm}(f_{\alpha;T},t/T^{n};x,(R+D)/T)}{c(n,\alpha)\cdot \vol B(R+D)}-\Psi(t)\right|_{+}dx\right] \\&=
\int\limits_{\Delta}\int\limits_{\M} \left|\frac{\nod_{norm}(f_{\alpha;T},t/T^{n};x,(R+D)/T)}{c(n,\alpha)\cdot \vol B(R+D)}-\Psi(t)\right|_{+}dx
d\Pc(\omega)
\rightarrow 0,
\end{split}
\end{equation}
where $\Delta$ is the underlying probability space, and $\Pc$ is the probability measure on $\Delta$.
Now consider the event
$$\Delta_{T,t;x,R} = \left\{  \left| \frac{\nod (f_{\alpha;T},t/T^{n};x,R/T)}{c(n,\alpha)\cdot
\vol B(R+D)}-\Psi(t)\right|>\epsilon\right\}.$$
Then, recalling the assumption of Proposition \ref{prop:glob upper bound norm tree} on $t$ (i.e. that $\Psi(\cdot)$ is continuous at $t$),
Theorem \ref{thm:loc scal} implies that for all $x\in\M$
\begin{equation}
\label{eq:P(Omega)->0}
\lim\limits_{R\rightarrow\infty}\limsup\limits_{T\rightarrow\infty} \Pc(\Delta_{T,t;x,R} ) =0.
\end{equation}
We claim that the above implies that there exists a sequence $\{R_{j}\}_{j\rightarrow\infty}$ of values $R=R_{j}\rightarrow\infty$ so that
the limit \eqref{eq:P(Omega)->0} is almost uniform w.r.t. $x\in X$, that is, for every $\rho>0$
there exists $\M_{\rho} \subseteq \M$ of volume $$\vol \M_{\rho}>1-\rho,$$ such that
\begin{equation}
\label{eq:P(Omega) Egorov}
\varliminf\limits_{R\rightarrow\infty}\varlimsup\limits_{T\rightarrow\infty} \sup\limits_{x\in\M_{\rho}}\Pc(\Delta_{T,t;x,R+D}) =0.
\end{equation}
To see \eqref{eq:P(Omega) Egorov} we first apply an Egorov-type theorem on the limit in \eqref{eq:P(Omega)->0} w.r.t.
$R\rightarrow\infty$: working with the sets $$E_{n,k} =
\bigcup\limits_{R>n \text{ integer}}\left\{x\in \M:\: \Pc(\Delta_{T,t;x,R+D}) > \frac{1}{k} \text{ for } T=T_{j}\rightarrow\infty\right\}$$
yields that for some $M_{\rho}$ with $$\vol(M_{\rho})>1-\frac{\rho}{2}$$ we have
\begin{equation*}
\lim\limits_{R\rightarrow\infty}\sup\limits_{x\in\M_{\rho}}\varlimsup\limits_{T\rightarrow\infty} \Pc(\Delta_{T,t;x,R+D}) =0;
\end{equation*}
this is not quite the same as the claimed result \eqref{eq:P(Omega) Egorov}, as the order of $\sup\limits_{x\in\M_{\rho}}$
and the $\limsup$ w.r.t. $T\rightarrow\infty$ is wrong.
We use an Egorov-type argument once again, w.r.t. the limit $\varlimsup\limits_{T\rightarrow\infty}$
to mollify this.
Fix an integer $r>0$, and let $R=R(r)>0$ sufficiently big so that
\begin{equation}
\label{eq:sup(limsup)<1/r}
\sup\limits_{x\in\M_{\rho}}\varlimsup\limits_{T\rightarrow\infty} \Pc(\Delta_{T,t;x,R+D})<\frac{1}{r}.
\end{equation}
Define the monotone decreasing sequence of sets
$$F_{m} = \bigcup\limits_{T>m}\left\{x\in \M_{\rho}:\: \Pc(\Delta_{T,t;x,R+D}) > \frac{2}{r} \right\}.$$
Since, by \eqref{eq:sup(limsup)<1/r}, $$\bigcap\limits_{m\ge 1} F_{m}=\emptyset,$$
we may find $m=m(r)$ sufficiently
big so that $\vol(F_{m(r)})<\frac{\rho}{2^{r+1}}$. Therefore the claimed result \eqref{eq:P(Omega) Egorov} holds on
$$M_{\rho}\setminus \bigcup\limits_{r\ge 1}F_{m(r)},$$ i.e.
further excising the set $$\bigcup\limits_{r\ge 1}F_{m(r)} $$ of volume $<\frac{\rho}{2}$ from $\M_{\rho}$.

We then write the integral \eqref{eq:int Omega,M N(S,x)-cS(x)} as
\begin{equation}
\label{eq:int OmegaM [NV-eta]+}
\begin{split}
&\int\limits_{\Delta}\int\limits_{\M} \left|\frac{\nod_{norm}(f_{\alpha;T},t/T^{n};x,(R+D)/T)}{c(n,\alpha)\cdot \vol
B((R+D))}-\Psi(t)\right|_{+}dx d\Pc (\omega)\\&=\int\limits_{\M}\int\limits_{\Delta_{T,t;x,R+D}}
+\int\limits_{\M}\int\limits_{\Delta\setminus\Delta_{T,t;x,R+D}}.
\end{split}
\end{equation}
First, on $\Delta\setminus\Delta_{T,t;x,R+D}$, the integrand of \eqref{eq:int OmegaM [NV-eta]+} is
\begin{equation*}
\begin{split}
&\left|\frac{\nod_{norm}(f_{\alpha;T},t/T^{n};x,(R+D)/T)}{c(n,\alpha)\cdot \vol B((R+D)/T)}-\Psi(t)\right|_{+} \\&\le
\left|\frac{\nod (f_{\alpha;T},t/T^{n};x,(R+D)/T)}{c(n,\alpha)\cdot \vol B((R+D)/T)}-\Psi(t)\right|   \le \epsilon,
\end{split}
\end{equation*}
and hence the contribution of this range is
\begin{equation}
\label{eq:int Nnorm /Omega}
\begin{split}
&\int\limits_{\M}\int\limits_{\Delta\setminus\Delta_{T,t;x,R+D}}
\left|\frac{\nod_{norm}(f_{\alpha;T},t/T^{n};x,(R+D)/T)}{c(n,\alpha)\cdot \vol B((R+D)/T)}-\Psi(t)\right|_{+}dx d\Pc (\omega)
\\&\le \int\limits_{\M}\int\limits_{\Delta\setminus\Delta_{T,t;x,R+D}}\epsilon dxd\Pc(\omega) \le \epsilon.
\end{split}
\end{equation}

On $\Delta_{T,t;x,R+D}$ we use the volume estimate \eqref{eq:Nnorm<=delta^{-1}vol(B)} yielding uniformly on $x\in\M$
\begin{equation}
\label{eq:NV triv estimate Omega}
\begin{split}
&\int\limits_{\Delta_{T,t;x,R+D}}
\left|\frac{\nod_{norm}(f_{\alpha;T},t/T^{n};x,(R+D)/T)}{c(n,\alpha)\cdot\vol B(R+D)}-\Psi(t)\right|_{+} d\Pc (\omega)
\\&\le \int\limits_{\Delta_{T,t;x,R+D}}
\left|\frac{\nod_{norm}(f_{\alpha;T},t/T^{n};x,(R+D)/T)}{c(n,\alpha)\cdot\vol B(R+D)}\right|_{+}d\Pc (\omega)
\\&\le \int\limits_{\Delta_{T,t;x,R+D}}\xi^{-1}T^{n}\frac{\vol_{\M}(B_{x}((R+D)/T))}{c(n,\alpha)\cdot\vol B(R+D)}d\Pc (\omega)  \\&\le
(1+\epsilon)\xi^{-1}\Pc(\Delta_{T,t;x,R+D}).
\end{split}
\end{equation}
Similarly to the above, uniformly for $\omega\in\Delta$
\begin{equation}
\label{eq:NV triv estimate M/Meta}
\int\limits_{\M\setminus \M_{\rho}}
\left|\frac{\nod_{norm}(f_{\alpha;T},t/T^{n};x,(R+D)/T)}{c(n,\alpha)\cdot \vol B(R+D)}-\Psi(t)\right|_{+} dx \le (1+\epsilon)\xi^{-1}\rho.
\end{equation}

The uniform estimates \eqref{eq:NV triv estimate Omega} and \eqref{eq:NV triv estimate M/Meta} imply that
\begin{equation*}
\begin{split}
&\int\limits_{\M}\int\limits_{\Delta_{L,t;x,R+D}}
\left|\frac{\nod_{norm}(f_{\alpha;T},t/T^{n};x,(R+D)/T)}{c(n,\alpha)\cdot \vol B(R+D)}-\Psi(t)\right|_{+}
dx\Pc(\omega) \\&\le (1+\epsilon)\xi^{-1}
(\sup\limits_{x\in \M_{\rho}}\Pc(\Delta_{T,t;x,R+D}) + \rho).
\end{split}
\end{equation*}
Upon substituting the latter estimate and \eqref{eq:int Nnorm /Omega} into \eqref{eq:int OmegaM [NV-eta]+},
and then to the integral \eqref{eq:int Omega,M N(S,x)-cS(x)}, we finally obtain
\begin{equation*}
\begin{split}
&\E\left|\int\limits_{\M} \frac{\nod_{norm}(f_{\alpha;T},t/T^{n};x,(R+D)/T)}{c(n,\alpha)\cdot\vol B(R+D)}-\Psi(t)\right|_{+}dx
\\&\le \epsilon + (1+\epsilon)\xi^{-1}
(\sup\limits_{x\in \M_{\rho}}\Pc(\Delta_{T,t;x,R+D}) + \rho),
\end{split}
\end{equation*}
which could be made arbitrarily small for each sufficiently small choice of $\xi$ excising the very small
components, and using \eqref{eq:P(Omega) Egorov}. This concludes the proof of \eqref{eq:int Omega,M N(S,x)-cS(x)},
sufficient to yield the conclusion of Proposition \ref{prop:glob upper bound norm tree}.

\end{proof}

\section{Final remark: Volumes of the nodal components}
\label{sec: final remarks}

\subsection{Volume distribution of boundary components}

We would like to point out that the methods we are using are rather general and with minimal change
one may prove other results. These in particular include similar results about the $(n-1)$ dimensional volumes
of nodal components or boundaries of nodal domains instead of the $n$ dimensional volume of the nodal domains.
The analogue of Theorem \ref{thm:scal invar exist} is proved in exactly the same way using Kac-Rice for the bound and ergodic theory for the existence of the limit. Going from the planar case to the Riemannian one is also straighforward.
All these results could be rewritten line-by-line, with the definition of $\nod$ essentially the only change.

The only difference is in the results stating that the limit functions are strictly increasing i.e. Theorems \ref{thm:scal invar increase gen} and \ref{thm:scal invar increase RWM}.
The main idea is still the same: we have to create a deterministic example with a nodal component of (approximately)
given size, and then show that small perturbations do not significantly alter its size with positive probability.
The first part follows along the lines precisely as before. For $\alpha<1$ the Hilbert space $\mathcal H$ is dense in any $C^k(Q)$, hence we can construct any non-degenerate example. For $\alpha=1$, the same family of domains works for the nodal sets, the minimal volume will be the volume of the surface volume of the sphere of radius $j_{n/2-1,1}$ instead of the the $n$ dimensional volume of the corresponding ball. (This follows from the isoperimetrical inequality.) The only real difference is in the Lemma \ref{lem:vol pert}: controlling the $C^1$ norm is not sufficient in order to control the change of the boundary volume. But it is not very difficult to show that if the perturbation has a small
$C^2$-norm, then the ratio of boundary volumes of the original and perturbed domains is $1+O(b)$ where the constant in $O(b)$ depend only on the $C^2$ norms.

\subsection{Coupling between domain boundary volumes}

Finally, we point that it is possible to consider the {\em joint} distribution of $$(\vol_n(\omega),\vol_{n-1}(\partial \omega))$$ where $\omega$ is a nodal domain. The existence of the scaling limit is established via an identical argument to the above. Constructing a deterministic example in the case $\alpha<1$ also follows along the lines of the argument in this manuscript. This would give that the limiting distribution $\Psi(t,l)$ is strictly increasing in both $t$ and $l$ for all $t>0$ and $l>l_0(t)$, where $l_0(t)$ is the surface area of a sphere with volume $t$ (follows from the isoperimetric inequality).

The case $\alpha=1$ is more complicated. From the isoperimetric inequality, for each $t>t_0$, the size of the boundary must be $\ge l_0(t)$. But it is clear that this is not the best possible lower bound, as otherwise the domain would be a ball and its main eigenvalue would be larger than $1$ unless $t=t_0$. On the other hand, there {\em exists} a deterministic infimum of the boundary volume over all domains that have volume $t\ge t_0$ and the main eigenvalue $1$. We denote this infimum by $l_1(t)$. There exists a domain for which the boundary is {\em arbitrary close} to $l_1(t)$. Next we attach to this domain a very thin tube and at the end of this tube we attach a very thin plate, see Figure \ref{fig:coupling}. This only slightly perturbs the volume, does not alter the main eigenvalue significantly, but increases the boundary volume a lot. After that we take a small perturbation to make the domain real analytic and use Lax-Malgrange to approximate the main eigenfunction by a plane wave. This way we can construct a nodal domain with eigenvalue $1$, volume arbitrary close to $t\ge t_0$ and boundary volume arbitrary close to $l\ge l_1(t)$.

\begin{figure}
\label{fig:coupling}
\includegraphics[width=9cm]{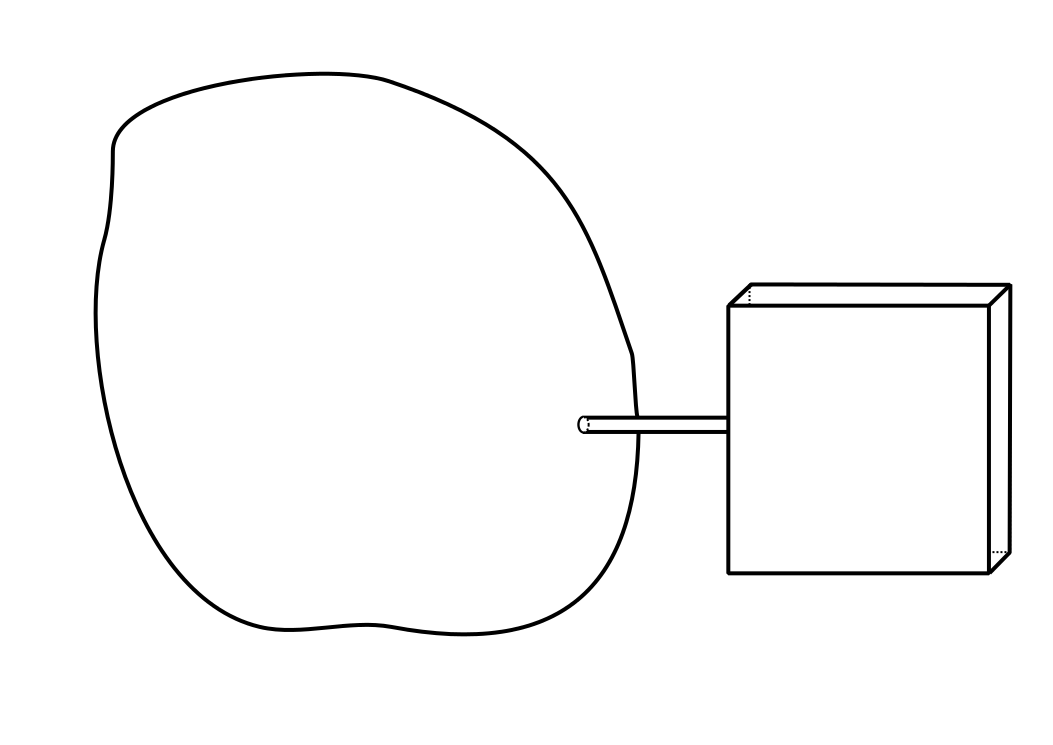}
\caption{Connecting a thin plate by a thin tube.}
\end{figure}

\bibliography{nodal}
\bibliographystyle{abbrv}

\end{document}